\theoremstyle{plain}
\newtheorem{theorem}{Theorem}
\newtheorem{proposition}{Proposition}
\newtheorem{lemma}{Lemma}
\newtheorem{definition}{Definition}
\newtheorem{conjecture}{Conjecture}
\newtheorem{corollary}{Corollary}
\def\A{\mathbb{A}}
\def\C{\mathbb{C}}
\def\Ind{\text{Ind}}
\def\diag{\text{diag}}
\def\Hom{\text{Hom}}
\def\Mat{\text{\textnormal{Mat}}}
\numberwithin{equation}{section}
\newcommand\mydots{\makebox[1em][c]{.\hfil.\hfil.}}
\begin{document}

\title[Classical Theta Lifts for Higher Metaplectic Covering Groups]{Classical Theta Lifts for Higher Metaplectic Covering Groups}
\author{Solomon Friedberg}
\author{David Ginzburg}
\date{August 29, 2020}
\address{Friedberg:  Department of Mathematics, Boston College, Chestnut Hill, MA 02467-3806, USA}
\email{solomon.friedberg@bc.edu}
\address{Ginzburg: School of Mathematical Sciences, Tel Aviv University, Ramat Aviv, Tel Aviv 6997801,
Israel}
\email{ginzburg@post.tau.ac.il}
\thanks{This work was supported by the BSF,
grant number 2016000, and by the NSF, grant number DMS-1801497 (Friedberg).}
\subjclass[2010]{Primary 11F27; Secondary 11F70}
\keywords{Metaplectic cover, theta lifting, Weil representation, unipotent orbit}
\begin{abstract}
The classical theta correspondence establishes a relationship between automorphic representations on special orthogonal groups and automorphic representations on 
symplectic groups or their double covers. 
This correspondence is achieved by using as integral kernel a theta series on the metaplectic double cover of a symplectic group that is constructed from the Weil representation.  
There is also an analogous local correspondence.
In this work we present an extension of the classical theta correspondence to higher degree metaplectic covers of symplectic and special orthogonal groups.  
The key issue here is that for higher degree covers there is no analogue of the Weil representation, 
and additional ingredients are needed.  Our work reflects a broader paradigm: constructions in automorphic forms that work for algebraic groups or their double covers should often extend to 
higher degree metaplectic covers.
\end{abstract}

\maketitle

\section{Introduction}\label{introduction}
Theta series provide a way to construct correspondences between spaces of automorphic forms.  For example, suppose that $G_1=SO(V_1)$ and $G_2=Sp(V_2)$ are orthogonal and symplectic
groups, resp.  Then these groups embed in a symplectic group $G:=Sp(V_1\otimes V_2)$.  
Let $F$ be a number field and $\A$ its ring of adeles.
There is a family of theta functions $\theta^\phi$ on the adelic metaplectic
double cover $Mp(V_1\otimes V_2)(\A)$ (depending on 
some additional data $\phi$), and these functions may be used to create a correspondence between automorphic forms $f_1$ on $G_1(\A)$ and automorphic forms $f_2$
on $G_2(\A)$ or its double cover:
$$f_2(g_2,\phi)=\int \theta^\phi(g_1\cdot g_2)\,f_1(g_1)\,dg_1,$$
where the integral is over the adelic quotient $G_1(F)\backslash G_1(\A)$.  
This correspondence, which in a low-rank case can be used to recreate the Shimura correspondence, has been studied by many authors
over the past half century (see for example Rallis \cite{R1} and the references therein).   
There is also a local correspondence of irreducible smooth representations, the Howe correspondence, that is obtained by restricting the Weil representation to the image of $G_1\cdot G_2$ in $G$
(see Howe \cite{Ho}),
that has likewise received a great deal of attention.  The goal of this paper is to extend these constructions to higher metaplectic covers of the groups 
$G_1,G_2$ (beyond the double cover $Mp$) and to initiate an analysis of the resulting map of representations.

The double cover of the symplectic group, $Mp$, was introduced by Weil \cite{We} in his treatment of theta series.  
Over local or global fields $F$ with enough roots of unity, there are higher degree covers of
classical groups as well, related to the work of Bass-Milnor-Serre 
on the congruence subgroup problem.  These were first treated for simply connected algebraic groups over $F$ whose $F$-points are simple, simply connected, split over $F$
and of rank at least two by Matsumoto \cite{Mat},
and in the context of number theory and automorphic forms  for $GL_n$ by Kubota \cite{Ku} (when $n=2$) and 
Kahzdan-Patterson \cite{K-P} (for general $n$).  The construction of covering groups was extended to a wide class of groups
$K$-theoretically by Brylinski and Deligne \cite{B-D}.  Fan Gao \cite{Gao1} gave a thorough treatment of metaplectic covers 
over local and global fields using the Brylinski-Deligne construction, and generalized Langlands's work on the constant term of 
Eisenstein series to Eisenstein series on covering groups.  Though for more general groups there are sometimes
a number of inequivalent covers of a given degree, for the groups we treat here the covers are essentially unique, and locally may be regarded as restrictions of the covers of
Matsumoto.  We will describe the covers in detail below.

In this paper, we construct a global theta lifting taking automorphic representations on covers of a symplectic group to automorphic forms on covers of an orthogonal group.  Here the degrees
of the covers must be compatible.  The construction is via a theta kernel.  The difficulty in this construction is that there is no known analogue of the Weil representation for general degree covers.  
The classical theta functions may also be obtained as residues of Eisenstein series on $Mp(\A)$, as shown by Ikeda \cite{Ik1} 
(this may be regarded as an instance of the Siegel-Weil formula; see Ikeda \cite{Ik2}), and so a first attempt would be
to mimic the construction above:
$$\widetilde{f}_2(g_2)=\int \widetilde{\theta}(g_1\cdot g_2)\,\widetilde{f}_1(g_1)\,dg_1,$$
where now $\widetilde\theta$ is an automorphic function realized as a residue of an Eisenstein series.  This is indeed useful in many cases, including
the lift of Bump and the authors \cite{B-F-G2} for the double cover of an orthogonal group and the recent work of 
Leslie \cite{Leslie}, which 
 constructs CAP representations
of the four-fold cover of a symplectic group using such a theta kernel.  However, 
 it is not sufficient to produce the lifting here.  The reason is that this residue is too large a representation, that is, it is not attached to the minimal nontrivial coadjoint orbit or some
 other orbit of small Gelfand-Kirillov dimension. Thus it is too big to 
give a correspondence by mimicking the standard procedure (i.e., restricting to the tensor product embedding).

Instead we consider the product of two different theta functions---one coming from a residue
of an Eisenstein series, and the other coming from the Weil representation--- and use a Fourier coefficient of this product as integral kernel (given precisely in \eqref{lift2} below).
In the case of the trivial cover of the orthogonal group, one of the theta functions and the Fourier coefficient integral become trivial, and our construction
reduces to the classical theta kernel construction.  Hence the construction presented here may be viewed as an extension of the classical theta integral construction.
There is also a local analogue, and we show that for principal series induced from unramified quasicharacters in general position the lifting in the equal rank case gives a correspondence that
is indeed compatible with the expected map of $L$-groups (these are discussed below). 
For the classical theta correspondence, local functoriality was established (in greater generality) by Rallis \cite{R-functor}, Section 6.

To explain the L-group formalism that is related to our construction, 
let $G$ denote one of the groups $Sp_{2l}$ or $SO_k$. Let $r$ denote a positive integer, and $G^{(r)}({\A})$ the $r$-fold metaplectic covering group of $G({\A})$. 
For this group to be defined we need to assume that the field $F$ contains all $r$-th roots of unity; for convenience we shall assume that $F$ contains the $2r$-th roots of unity. 
The notion of an L-group for metaplectic covering groups was developed in general by Weissman \cite{W}, following a description in the split case by McNamara \cite{McN}. 
The assumption that $F$ contains the $2r$-th roots avoids certain complications in Weissman's construction (and is required by McNamara).
In particular, though the Langlands dual group for a cover of an even orthogonal group goes as usual: $(SO_{2a}^{(r)})^\vee=SO_{2a}(\C)$, the Langlands dual group for 
covers of odd orthogonal and symplectic groups depends on the parity of $r$:
$$(Sp_{2l}^{(r)})^\vee=\begin{cases} SO_{2l+1}(\C)&\text{if $r$ is odd}\\ Sp_{2l}(\C)&\text{if $r$ is even}\end{cases}\qquad
(SO_{2a+1}^{(r)})^\vee=\begin{cases} Sp_{2a}(\C)&\text{if $r$ is odd}\\SO_{2a+1}(\C)&\text{if $r$ is even.}\end{cases}$$  As in the classical case,
one then expects functorial liftings of automorphic representations between the metaplectic covering groups of symplectic and orthogonal groups.  However, as one can see
from these dual groups, the groups involved in such a theta correspondence should sometimes be different than in the classical case.
Specifically, if $r$ is odd, then for suitable $a,l$ one expects a correspondence 
between automorphic representations of $Sp_{2l}^{(r)}({\A})$ and $SO_{2a}^{(r)}({\A})$, and between automorphic representations of $Sp_{2l}^{(2r)}({\A})$ and $SO_{2a+1}^{(r)}({\A})$.
For $r=1$, these maps are the classical theta correspondence.
In this paper we introduce a construction which gives them for any odd $r$. 

Residues of Eisenstein series, when square-integrable, contribute to the residual spectrum, and these residues are frequently not generic.  
Indeed, the residue we consider generates an automorphic representation 
that should have many vanishing classes of Fourier coefficients, and this property is key to establishing our results.  However, the question of determining the maximal unipotent orbit that supports
a non-zero Fourier coefficient, or locally a nonzero twisted Jacquet module, is rather delicate, and has only recently been addressed for the general linear group (Cai \cite{C1}; see also
Leslie \cite{Leslie}). We make a conjecture about this exact orbit (Conjecture~\ref{conj1}), 
establish the vanishing of Fourier coefficients that it would imply, and establish a number of partial results towards the full conjecture,
including proving it in certain cases.
Though we do not resolve it in full, these are sufficient for the results presented here.  See Section~\ref{conjtheta} below.

The construction we present here can most likely be taken farther.  For example, one can consider the inverse correspondence from covers of orthogonal groups to covers
of symplectic groups obtained by the same integral kernel,
and in the local case one may formulate analogues of the Howe Conjecture.  One can also ask for the first non-zero occurrence of the lift
(for the classical theta lift, see Roberts \cite{Roberts}), and this will be treated in a subsequent paper \cite{F-G5}.  In particular, we will show that for given $r$, if Conjecture~\ref{conj1} below
holds, then
the integrals presented here do not vanish on a full theta tower, and also that a generic cuspidal genuine automorphic representation of $Sp_{2n}^{(r)}(\A)$
always lifts nontrivially to a generic genuine representation on $SO_{2n+r+1}^{(r)}(\A)$.  Since Conjecture~\ref{conj1} is established here for $r=3,5$,
this implies these results unconditionally for these two cases.
One can also consider the case of $r$ even, but this will require additional information about the unipotent orbits of theta residues. 

Our work suggests a broader paradigm:  {\it constructions} in the theory of automorphic forms should generalize to covers.  We note three examples.  First, as mentioned above,
Fan Gao \cite{Gao1} has extended Langlands's work on the constant term of Eisenstein series to covers.  Second, the doubling integrals of the authors, Cai and Kaplan \cite{CFGK1} 
may be extended to covers, as loosely sketched
in our announcement \cite{CFGK2} and worked out in detail by Kaplan in \cite{Kap}.  See also Cai \cite{C2}.
Third, the work here indicates that the classical theta correspondence also generalizes.  As noted above,
Weissman \cite{W} has defined a metaplectic $L$-group. Our suggestion is that not only the formalism of functoriality but also the integrals that give $L$-functions or correspondences 
should often generalize.  Of course, doing so must involve new ideas, as is the case here.  As another example, 
it is not straightforward to extend Langlands-Shahidi theory to covers as
this theory uses the uniqueness of the Whittaker model, and such uniqueness does not hold, even locally, for most covering groups. However, it should be possible to
generalize this theory by replacing the Whittaker model for a cuspidal automorphic
representation $\tau$ on a metaplectic covering group with the Whittaker model for the Speh representation attached to $\tau$. Indeed, 
this Speh representation, a residue of an Eisenstein series (and defined only conjecturally at the moment), is expected to have a unique Whittaker 
model by a generalization of Suzuki's conjecture
\cite{Su1}, \cite{Su2}.  (Suzuki's conjecture concerns covers of $GL_n$ and we expect a similar phenomenon in general.)

To conclude this introduction we describe the algebraic and representation theoretic structures that are behind our construction.  
A classical reductive dual pair is a pair of subgroups $(G_1,G_2)$
inside a symplectic group $Sp(W)$ which are mutual centralizers and which act reductively on $W$.  Our algebraic structure is this:
\begin{enumerate}
\item Groups $G_1$, $G_2$, and two symplectic vector spaces $W_1$, $W_2$,  with monomorphisms
$$\iota_1:G_1\times G_2\to Sp(W_1), \quad \iota_2:G_1\times G_2\to Sp(W_2),$$ 
such that 
\begin{enumerate} \item via the map $\iota_1$, $(G_1,G_2)$ is a reductive dual pair in $Sp(W_1)$
\item
the images under $\iota_2$ of $G_1\times 1$ and $1\times G_2$ in $Sp(W_2)$ commute, though they need not be mutual centralizers;
\end{enumerate}
\item A unipotent group $U\subset Sp(W_2)$ which is normalized by $\iota_2(G_1,G_2)$; 
\item A character $\psi_U:U\to \mathbb{C}$ such that $\iota_2(G_1,G_2)$, acting by conjugation, stabilizes $\psi_U$, and the index of $\iota_2(G_1,G_2)$ in the full stabilizer of $\psi_U$ is finite;
\item A homomorphism $l:U\to {\mathcal{H}}(W_1)$, where $\mathcal{H}(W_1)$ denotes the Heisenberg group attached to $W_1$.
\end{enumerate}
In this paper, we take $(G_1,G_2)=(SO_k,Sp_{2n})$. The map $\iota_1$ is the tensor product embedding into $Sp_{2nk}$, just as in the
classical theta correspondence, while remaining ingredients are given in Sections~\ref{basic} and \ref{basic2} below. We actually consider covers, and extend the maps $\iota_1,\iota_2$ to covers there.

To explain the representation theory that is employed, let $\omega_\psi$ denote the Weil representation of $\mathcal{H}(W_1)\rtimes {Mp}(W_1)$ 
with additive character $\psi$, over either $\A$ or a completion of $F$.
As noted above, the global theta correspondence (resp.\ locally, the Howe Correspondence) is 
based on restricting theta series constructed from $\omega_\psi$ (resp.\ the representation $\omega_\psi$) to 
the image of a reductive dual pair
$\iota_1(G_1,G_2)$.  In our structure, we have two symplectic groups, and we restrict the tensor product of two small representations.  
Specifically, we form the representation $\omega_\psi\otimes \Theta_2^{(r)}$, where $\Theta_2^{(r)}$ is a theta representation
 of $Sp^{(r)}(W_2)$.  We then restrict this to ${Mp}(W_1)\times Sp^{(r)}(W_2)$ where the groups act on the two representations via $\iota_1$ and $\iota_2$, resp.
 Globally, in making the integral kernel we also take a Fourier coefficient with respect to $U $ and $\psi_U$, where the action of $U$ on the first factor is via $\ell$; 
 see \eqref{lift2} below.
The local map, conjecturally a correspondence, makes use of a twisted Jacquet module with respect to $U$, where
 in defining this Jacquet module, the action of $U$ on the first factor is again via $\ell$. That is, if we write $(V_1,\omega_\psi)$ and $(V_2,\Theta_2^{(r)})$ as the representations over a local field,
then $J_{U,\psi_U}(\omega_\psi\otimes \Theta_2^{(r)})=(V_1\otimes V_2)/V_3$ with
$$
V_3=<\omega_\psi(\ell(u)) v_1\otimes \Theta^{(r)}_2(u)v_2- v_1\otimes \psi_U(u)v_2\mid v_1\in V_1, v_2\in V_2, u\in U>.
$$
It is this twisted Jacquet module that is restricted to $G_1\times G_2$ (with suitable covers, and once again using the embeddings $\iota_1$, $\iota_2$).

Section~\ref{basic} introduces the notation to be used in the sequel and treats the necessary foundations concerning metaplectic groups in some detail.  In particular, we shall have
occasion to use isomorphic but different covers of groups (these arise by modifying a 2-cocycle by a coboundary), and these are described precisely. Then Section~\ref{basic2}
presents the integral \eqref{lift2} that describes the theta lifting.  As indicated above, this integral requires a theta function on a higher odd degree
cover of a symplectic group $Sp_{2l}$ and also a second theta function on the double cover of a symplectic group, which is constructed using the Weil representation.  
The associated higher theta representation $\Theta_{2l}^{(r)}$ is analyzed in Section~{\ref{conjtheta}}.  In Conjecture~\ref{conj1} we describe the expected unipotent orbit
behavior of this representation, and in Theorem~\ref{theta05} we give a proof of the Conjecture in certain cases and a partial proof in others.  
Section~\ref{cusp1} concerns cuspidality. In Theorem~\ref{th2}, we show that an analogue
of the Rallis theta tower is valid for this new class of theta lifts.  The proof uses in an essential way both the properties of the higher theta representation
$\Theta_{2l}^{(r)}$ and the description of the classical theta function on the double cover.  Section~\ref{the-unramified-corr}, the final section of this work, concerns the unramified lift.
In Theorem~\ref{unramified}, this is shown to be functorial for characters in general position in the equal rank case.

\section{Groups and Covering Groups}\label{basic}

Let $F$ be a number field with ring of adeles $\A$. If $H$ is any algebraic group defined over $F$, we write $[H]$ for the quotient $H(F)\backslash H(\A)$.
In particular, $[\mathbb{G}_a]=F\backslash \A$.

We will work with the symplectic and special orthogonal groups defined over $F$.  We realize them as follows. For $m\geq1$ let 
 $J_m$ be the $m\times m$ matrix with 1 on the anti-diagonal and zero elsewhere.
 Let $Sp_{2m}$ denote the symplectic group
$$Sp_{2m}=\left\{g\in GL_{2m}~\mid~ ^t\!g \begin{pmatrix}0&J_m\\-J_m&0\end{pmatrix} g=\begin{pmatrix}0&J_m\\-J_m&0\end{pmatrix}\right\}$$
and for $k\geq2$ let $SO_k$ denote the split special orthogonal group
$$SO_k=\left\{ g\in GL_k~ \mid ~^t\!g\, J_k\,g=J_k\right\}.$$
(The construction presented here can be extended to the non-split case but we shall not do so here.)  

Throughout the paper, we make use of two embeddings. 
First, let $n\geq1$, $k\geq2$, and let $\iota_1:SO_k\times Sp_{2n}\to Sp_{2nk}$ denote the usual tensor product embedding.  In matrices, if  $h\in SO_k(\A)$ and $g\in Sp_{2n}(\A)$,
then $\iota_1((1,g))=\text{diag}(g,\dots,g)$ where $g$ appears $k$ times,
and $\iota_1((h,1))=(h_{ij}I_{2n})$.  Second, let $r\geq1$ be odd and 
let $\iota_2:SO_k\times Sp_{2n}\to Sp_{2n+k(r-1)}$ be the embedding given by $\iota_2(h,g)=\text{diag}(h,\ldots,h,g,h^*,\ldots,h^*)$, 
where $h$, $h^*$ each appear $(r-1)/2$ times and $h^*$ is determined so that the matrix is symplectic. 

Let $\Mat_{a\times b}$ denote the algebraic group of all matrices of size $a\times b$, and when $a=b$, write $\Mat_a=\Mat_{a\times a}$.
For $l\geq1$, let ${\mathcal H}_{2l+1}$ denote the Heisenberg group in $2l+1$ variables.  This is the group with elements $(X,Y,z)$ where $X,Y\in \Mat_{1\times l}$ and $z\in \Mat_1$,
and multiplication given by 
$$(X_1,Y_1,z_1)(X_2,Y_2,z_2)=(X_1+X_2,Y_1+Y_2,z_1+z_2+{\tfrac12}(X_1J_l\,^tY_2-Y_1J_l\,^tX_2)).$$
The Heisenberg group ${\mathcal H}_{2l+1}$ is isomorphic to a subgroup of $Sp_{2l+2}$ by the map
$$\tau(X,Y,z)=\begin{pmatrix}1&X&{\tfrac12}Y&z\\&I_l&&Y^*\\&&I_l&X^*\\&&&1\end{pmatrix},$$
where the starred entries are uniquely determined by the requirement that the matrix be symplectic ($X^*=-J_l\,^tX$, $Y^*={\tfrac12}J_l\,^tY$).  
Also, let
$\Mat_a^0=\{ Z\in \Mat_{a} \mid {}^tZJ_{a}=J_{a}Z\}$, and let $\Mat_a^{00}=\{ Z\in \Mat_{a}\mid{} ^tZJ_{a}=-J_{a}Z\}$.

Next we define some parabolic and unipotent groups that will be of use. All parabolic subgroups here are standard parabolic subgroups whose 
unipotent radical consists of upper triangular unipotent matrices, and all of the unipotent groups we introduce are groups of upper triangular unipotent matrices.

For non-negative integers $a,b$ and $c$,
let $P_{a,b,c}$ denote the parabolic subgroup of $Sp_{2(ab+c)}$ whose Levi part is $GL_{a}\times\ldots\times GL_{a}\times Sp_{2c}$ 
where $GL_{a}$ appears $b$ times.  Let $U_{a,b,c}$ denote the unipotent radical of the group $P_{a,b,c}$. 
Let $1\le i\le b-1$,  $\alpha=a(i-1)$,  $\beta=2(ab+c)-2a(i+1)$, and set
\begin{equation}\label{mat4}
u_{a,b,c}^i(X)=\begin{pmatrix} I_\alpha&&&&&&\\ &I_a&X&&&&\\ &&I_a&&&&\\ &&&I_\beta&&&\\
&&&&I_a&X^*&&\\ &&&&&I_a&\\ &&&&&&I_\alpha\end{pmatrix},\qquad
X\in \Mat_{a}.
\end{equation}
The set of all matrices $\{u_{a,b,c}^i(X)\mid X\in \Mat_{a}\}$  is a subgroup of $U_{a,b,c}$ which we denote by $U_{a,b,c}^i$. 
Given $u\in U_{a,b,c}$, one may write it uniquely as
$u=u_{a,b,c}^i(X_i)u'$ where $X_i\in\Mat_{a}$ 
and $u'\in U_{a,b,c}$ is such that all its $(p,j)$ entries are zero when $\alpha+1\le p\le \alpha+a$ and  $\alpha+a+1\le j\le \alpha+2a$. We call
$u_{a,b,c}^i(X_i)$ the $i$-th coordinate of $u$. 

Another subgroup of $U_{a,b,c}$ is the group 
$U_{a,c}'$ that consists of all matrices 
\begin{equation}\label{mat1}
u_{a,c}'(Y,Z)=\begin{pmatrix} I_{a(b-1)}&&&&\\ &I_{a}&Y&Z&\\ &&I_{2c}&Y^*&\\ &&&I_{a}&\\
&&&&I_{a(b-1)}\end{pmatrix},\qquad Y\in \Mat_{a\times 2c}, Z\in \Mat_a^0.
\end{equation}
Similarly, we define the $u_{a,c}'(Y,Z)$ coordinate of $u$.
Then every $u\in U_{a,b,c}$ has a  factorization  
\begin{equation}\label{mat2}
u=u_{a,c}'(Y,Z)\prod_{i=1}^{b-1} u_{a,b,c}^i(X_i)u_1,
\end{equation}
where $u_1\in U_{a,b,c}$ has entries zero in the first $b$ $a\times a$ blocks directly above the main diagonal and in positions $Y$ and $Z$ in \eqref{mat1} above
(and so also zero in the corresponding last $b$ superdiagonal blocks and in $Y^*$).
Let $U_{a,b,c}^1$ denote the subgroup of $U_{a,b,c}$ consisting of all matrices \eqref{mat2} such that $Y=0$.
Also, let $U_{a,b,c}^0$ denote the subgroup of $U_{a,b,c}$ consisting of all matrices \eqref{mat2} such that $Y=Z=0$. Note that $U_{a,b,c}^0$ is a subgroup of 
$U_{a,b,c}^1$.

The group $U_{a,c}'$  has a structure of a generalized Heisenberg group.
Define a homomorphism 
$l:U_{a,b,c}\to {\mathcal H}_{2ac+1}$ as follows.   Let $y_i\in \Mat_{1\times 2c}$ denote the $i$-th row of $Y$,
and define 
\begin{equation}\label{Heisenberg-hom}
l(u_{a,c}'(Y,Z))=(y_a,y_{a-1},\dots,y_1,\tfrac12\text{tr}(T_kZ)),
\end{equation} 
where $T_k$ is the identity matrix $I_k$ if $k$ is even, and $T_k=\diag(I_{[k/2]},2,I_{[k/2]})$ if $k$ is odd.
(Note that the center of $U_{a,c}$ consists of all matrices \eqref{mat1} such that $Y=0$, and this is mapped to the center of ${\mathcal H}_{2ac+1}$.) 
Then extend $l$ trivially from $U_{a,c}'$ to $U_{a,b,c}$.

Also, when we work with Weyl groups, we shall always work with representatives which have one non-zero entry in 
each row and column whose value is $\pm1$; we call such matrices Weyl group elements.  For the symplectic group $Sp_{2a}$, such a matrix is determined 
uniquely by its entries in the first $a$ rows and we sometimes specify it in this way (e.g.\ the proof of Proposition~\ref{theta03}).

We now set the notation for metaplectic groups.
Fix an integer $m\geq1$ such that $F$ contains a full set of $m$-th roots of unity $\mu_m$.  (Below, we will take $m$ to be an odd integer $r$ or to be $2r$.)
If $G$ is either the $F_\nu$ points (where $\nu$ is a place of $F$) or the $\A$
points of a linear algebraic group (that will be $SO$ or $Sp$), 
then an 
$m$-fold covering group $\widetilde{G}$  of $G$ is a topological central extension of $G$ by $\mu_m$.
The group consists of pairs $(g,\epsilon)$, $g\in G$, $\epsilon\in\mu_m$, with multiplication described in terms a (Borel measurable) 2-cocycle $\sigma\in Z^2(G,\mu_m)$:
$$(g_1,\epsilon_1)(g_2,\epsilon_2)=(g_1g_2,\epsilon_1\epsilon_2\sigma(g_1,g_2)).$$
After fixing an embedding of $\mu_m$ into $\C^\times$, such groups naturally give rise to central extensions of $G$ by $\C^\times$ by the same multiplication rule.
If $\beta:G\to\C^\times$ is a (Borel measurable) map with $\beta(e)=1$, then multiplying $\sigma$ by the 2-coboundary 
$(g_1,g_2)\mapsto \beta(g_1)\beta(g_2)/\beta(g_1g_2)$
gives an isomorphic group, and we also consider such groups.  
One can construct a global 2-cocycle from local cocycles at each place by taking the product, but only if almost all factors are 1 on the adelic points of $G$.  

We begin with the local constructions that we will use.  Fix $r>1$ odd, let $K$ be a local field containing a full set of $r$-th roots of unity and let $(~,~)_r\in\mu_r$ be the local Hilbert symbol.
If $Sp_{2m}(K)$ is any symplectic group, let $Sp_{2m}^{(r)}(K)$ (or simply $Sp_{2m}^{(r)}$) denote the $r$-fold covering of $Sp_{2m}(K)$
constructed by Matsumoto \cite{Mat}
using the Steinberg symbol corresponding to $(~,~)_r^{-1}$.  Since $Sp_{2m}$ is simple and simply connected, this is the unique topological $r$-fold covering group of $Sp_{2m}(K)$; 
see for example Moore \cite{Mo}.  Hence, the cocycle is unique, but only up to a coboundary.
For any integer $m$, let $\sigma_m\in Z^2(SL_{m}(K),\mu_r)$ denote the 2-cocycle on $SL_m(K)$
constructed by Banks, Levy and Sepanski \cite{B-L-S}, Section 2 (when we need to indicate the cover, we write this $\sigma_m^{(r)}$). 
This cocycle enjoys a block compatibility property (\cite{B-L-S}, Theorem 1) that will be of use.
We shall realize $Sp_{2m}^{(r)}$ by restricting the cocycle $\sigma_{2m}\in Z^2(SL_{2m}(K),\mu_r)$ to $Sp_{2m}(K)\times Sp_{2m}(K)$, as in Kaplan \cite{Kap}, Section 1.4.
Doing so, the block compatibility implies that 
if $g_1,g_2\in SO_k(K)$, $h_1,h_2\in Sp_{2n}(K)$, then
\begin{equation}\label{block-iota2}\sigma_{2n+k(r-1)}(\iota_2(h_1,g_1),\iota_2(h_2,g_2))=\sigma_{k}(h_1,h_2)^{r-1}\sigma_{2n}(g_1,g_2).
\end{equation}
Since $\sigma_k$ take values in $\mu_r$, we have $\sigma_{k}(h_1,h_2)^{r-1}=\sigma_{k}(h_1,h_2)^{-1}$.
We realize the $r$-fold cover $SO_k^{(r)}(K)$ of the orthogonal group $SO_k(K)$ by restricting the cocycle $\sigma_k^{-1}$ to $SO_k(K)\times SO_k(K)$.
Then \eqref{block-iota2} implies that the map $\iota_2$ extends to a homomorphism of the covering groups $\iota^{(r)}_2:SO^{(r)}_k(K)\times Sp_{2n}^{(r)}(K)\to Sp^{(r)}_{2n+k(r-1)}(K)$,
given by
\begin{equation}\label{iota2}
\iota_2^{(r)}\left(((h,\epsilon_1),(g,\epsilon_2))\right)=(\iota_2(h,g),\epsilon_1\epsilon_2).
\end{equation}

We will also make use of the double cover $Sp^{(2)}_{2nk}(K)$ (this cover is unique up to isomorphism), and its pullback via
the tensor product embedding $\iota_1$.  Recall that when $k$ is even (resp.\ $k$ is odd), the pair $(SO_k,Sp_{2n})$ (resp.\  $(SO_k,Sp_{2n}^{(2)})$) forms a reductive dual pair inside
$Sp_{2nk}^{(2)}$.  We recall the local theory and give a reference
for the adelization; this construction underlies the classical theta correspondence.  

For each $m\geq1$, we realize the group $Sp^{(2)}_{2m}(K)$
by a Leray cocycle $\widetilde{\sigma}_{2m}$, which takes values in (generally) the eighth roots of unity.  (We will be specific in the next paragraph.) 
This cocycle may be regarded as arising from the classical Weil representation attached to a fixed nontrivial additive character and realized in the Schr\"odinger model; 
see Kudla \cite{Ku2}, Theorem 3.1.
The cocycle is cohomologous
to a ``Rao cocycle" with values in $\mu_2=\{\pm1\}$ (see  Rao \cite{Rao}). 

Information about the behavior of the metaplectic double cover with respect to the tensor product embedding may be found in Kudla \cite{Ku1}, \cite{Ku2}. 
We use a cocycle given in those works, adjusting the notation to take into account some different normalizations.
Kudla considers vector spaces $V,W$ over the local field $K$ of dimensions $k $ and $2n$, resp., 
equipped with nondegenerate bilinear (resp.\ symplectic) forms, and defines $\mathbb{W}=V\otimes_K W$.
The tensor product of the forms gives a symplectic form on $\mathbb{W}$.
He then defines $j_W$, $j_V$ by using
the tensor product embedding $SO(V)\times Sp(W)\to Sp(\mathbb{W})$ and restricting to the first and second factors, resp. 
Let $\lambda$ be the natural isomorphism from $Sp_{2nk}(K)$ (defined above) to the group $Sp(\mathbb{W})$ in Kudla's work (i.e.\ moving one symplectic form to another),
such that  $\lambda(\iota_1(h,1))=j_W(h)$ and $\lambda(\iota_1(1,g))=j_V(g)$ for all $h\in SO_k(K)$, $g\in Sp_{2n}(K)$. 
Let $c_{\mathbb{Y}}$ be the Leray cocycle on $Sp_{2nk}(K)^2$ in Kudla's notation.
Then we realize $Sp^{(2)}_{2nk}(K)$ by means of the cocycle
$\widetilde{\sigma}_{2nk}(g_1,g_2)=c_{\mathbb{Y}}(\lambda(g_1),\lambda(g_2))$ (for $g_1,g_2\in Sp_{2nk}(K)$).  For $k=1$ this defines $Sp^{(2)}_{2n}(K)$, with 
$Y$ the maximal isotropic subspace of $W$ given on p.\ 18 of \cite{Ku2}, which corresponds to $2n$-vectors with first $n$ entries zero in our normalization.
Then it is shown in \cite{Ku2}, Ch.\ II, Proposition 3.2, that there is a map $\beta_k:Sp_{2n}(K)\to S^1\subset\C^\times$ such that the map
${\iota}^{(2)}_1$given by
\begin{align*}
\iota^{(2)}_1:SO_k(K)\times Sp_{2n}(K)\to {Sp}_{2nk}^{(2)}(K)\qquad &\iota^{(2)}_1(h,g)=(\iota_1(h,g),\beta_k(g)^{-1})\quad&&\text{if $k$ is even}\cr
\iota^{(2)}_1:SO_k(K)\times Sp_{2n}^{(2)}(K)\to {Sp}_{2nk}^{(2)}(K)\qquad&\iota^{(2)}_1(h,(g,\epsilon))=(\iota_1(h,g),\beta_k(g)^{-1}\epsilon)\quad
&&\text{if $k$ is odd}
\end{align*}
is an isomorphism.

We now turn to the global construction.  Let $m\geq1$ be fixed and write $M=Sp_{2m}$.  We will construct the $r$-fold cover of $M(\A)$.  (We will then choose $m=2n+k(r-1)$.)
Let $F$ be a number field, and for each place $\nu$ let $\sigma_\nu$ be the cocycle $\widetilde{\sigma}_{2m}^{(r)}$ described above for $M(F_\nu)$.
Then at almost all places, the local 
cocycle $\sigma_\nu$ may be adjusted by a coboundary $\eta_\nu$ so that the new cocycle $\rho_\nu$ satisfies $\rho_\nu(\kappa,\kappa')=1$ if $\kappa, \kappa'$ are in the hyperspecial 
maximal compact group $M(\mathcal O_\nu)$, where $\mathcal O_\nu$ is the ring of integers of $F_\nu$.  (This may not be true, and there is no adjustment, at a finite number of places,
namely the ramified places of $F$, the places dividing $2r$ and the archimedean places.)
 
More precisely, for unramified places, let $\eta_\nu:M(F_\nu)\to S^1$ be a continuous map that satisfies 
\begin{equation}\label{split-the-cocycle}
\sigma_\nu(m,m')=\eta_\nu(mm')\eta_\nu(m)^{-1}\eta_\nu(m')^{-1}\qquad\text{for all $m,m'\in M(O_\nu$)}.
\end{equation}
This uniquely determines $\eta_\nu$ on $M(O_\nu)$, and it is then extended to $M(F_\nu)$. See \cite{Kap}, Section 1.5.
We will choose $\eta_\nu$ so that for all $h\in SO_k(F_\nu)$, $g\in Sp_{2n}(F_\nu)$,
\begin{equation}\label{block-iota2-eta}
\eta_\nu(\iota_2(h,1))\,\eta_\nu(\iota_2(1,g))=\eta_\nu(\iota_2(h,g)).
\end{equation}
Indeed, if $h\in SO_k(O_\nu)$, $g\in Sp_{2n}(O_\nu)$, then \eqref{block-iota2-eta} holds, since $\sigma_\nu(\iota_2(h,1),\iota_2(1,g))=1$ by \eqref{block-iota2},
 and then \eqref{block-iota2-eta}
follows from \eqref{split-the-cocycle}. We may then choose the extension of $\eta_\nu$ to $M(F_\nu)$ so that \eqref{block-iota2-eta} holds in general.
Then introduce the cocycle
$$\rho_\nu(g,g')={\frac{\eta_\nu(g)\eta_n(g')}{\eta_\nu(gg')}}\sigma_\nu(g,g')\qquad g,g'\in M(F_\nu),$$
and the 2-cocycle $\rho=\prod_v\rho_v$ of $M(\A)$.  This is well-defined since if $g,g'\in M(\A)$, then at almost all places $g,g'\in M(\mathcal O_\nu)$ and $\rho_\nu(g,g')=1$.
We shall realize the global $r$-fold metaplectic cover ${M}^{(r)}(\A)$ of $M(\A)$ as the central extension of $M(\A)$ by $\mu_r$ with respect to this cocycle.  
For $g\in M(\A)$ such that $\eta(g_v)=1$ for almost all $v$, let $\eta(g)=\prod_v \eta_v(g_v)$; in particular, this function is defined for $g\in M(F)$ (see Takeda \cite{Tak}, Proposition 1.8 and 
Kaplan \cite{Kap}, Section 1.5). 
Then (as a consequence of Hilbert reciprocity) $M(F)$ embeds in ${M}^{(r)}(\A)$ by the homomorphism $m\mapsto (m,\eta^{-1}(m))$.  Since it will always be clear from context
whether we are working in a matrix group or a covering group, we shall not introduce a separate notation for the image of $M(F)$; instead, we 
regard $M(F)$ as a subgroup of ${M}^{(r)}(\A)$ via this map.
We may then form the automorphic quotient $M(F)\backslash {M}^{(r)}(\A)$ and consider genuine automorphic representations on this quotient.

For our construction below, we will proceed as follows.  Given $n,k$, let $m=2n+k(r-1)$ and construct the 2-cocycle $\rho$ on $M(\A)$.  We then define 2-cocycles on $SO_k(\A)$ and
$Sp_{2n}(\A)$ by pulling back $\rho$ via the maps $\iota_2((\star,1))$ and $\iota_2((1,\star))$, resp.
In view of \eqref{block-iota2}, this gives $r$-fold covers of these groups, which we write $SO^{(r)}_k(\A)$, $Sp^{(r)}_{2n}(\A)$, whose
local components at $\nu$ are isomorphic to the local metaplectic $r$-fold covers constructed above (with $K=F_\nu$).
Also, using \eqref{block-iota2-eta}
it is not difficult to check that the map
$\iota_2^{(r)}:SO_k^{(r)}(\A)\times Sp_{2n}^{(r)}(\A)\to Sp_{2n+k(r-1)}^{(r)}(\A)$ given by \eqref{iota2} is a homomorphism.

Similarly one can define a global two-fold cover $Sp_{2nk}^{(2)}(\A)$ whose multiplication is given by a cocycle ${\sigma}_{2nk}^{(2)}$ which is the product
of the cocycle $\widetilde{\sigma}_{2nk,\nu}$ above adjusted by a coboundary at each place $\nu$.  This cocycle restricts to the trivial cocycle on $\iota_1(SO_k(\A),1)^2$ 
and to a cocycle which is either cohomologically trivial if $k$ is even or nontrivial if $k$ is odd on $\iota_1(1,Sp_{2n}(\A))^2$. See for example Sweet \cite{Sw}, Sections 1.8 and 2.4.
Thus there are homomorphisms
\begin{align*}
\iota^{(2)}_1:SO_k(\A)\times Sp_{2n}(\A)\to {Sp}_{2nk}^{(2)}(\A)\qquad &\iota^{(2)}_1(h,g)=(\iota_1(h,g),\delta_k(g))\quad&&\text{if $k$ is even}\cr
\iota^{(2)}_1:SO_k(\A)\times Sp_{2n}^{(2)}(\A)\to {Sp}_{2nk}^{(2)}(\A)\qquad&\iota^{(2)}_1(h,(g,\epsilon))=(\iota_1(h,g),\delta_k(g)\epsilon)\quad
&&\text{if $k$ is odd},
\end{align*}
where $\delta_k:Sp_{2n}(\A)\to S^1$ is a Borel measurable map.  
We also introduce the $r$-fold cover $\widetilde{Sp}_{2n}^{(r)}(\A)$, that is obtained by multiplying
the cocycle $\rho$ by the coboundary $\delta_k(g_1,g_2):= \delta_k(g_1g_2)\delta_k^{-1}(g_1)\delta_k^{-1}(g_2)$. The map $i(g,\epsilon)=(g,\delta_k(g)\epsilon)$ is an isomorphism from 
${Sp}_{2n}^{(r)}(\A)$ to $\widetilde{Sp}_{2n}^{(r)}(\A)$.  

We realize the $2r$-fold cover $Sp_{2n}^{(2r)}(\A)$ by using the 2-cocycle which is the product of the 2-cocycles $\rho$ and $\sigma_{2n}^{(2)}$.
For any $a$ there is a canonical projection $p^{(1)}:Sp_{2n}^{(a)}(\A)\to Sp_{2n}(\A)$ given by projection onto the first factor.  Then the $2r$-fold cover $Sp_{2n}^{(2r)}(\A)$ is isomorphic to the fibre
product of $Sp_{2n}^{(r)}(\A)$ and $Sp_{2n}^{(2)}(\A)$ over $Sp_{2n}(\A)$ with respect to these projections.  Fixing such an isomorphism
(equivalently, an isomorphism $\mu_{2r}\cong \mu_r\times \mu_2$), the group $Sp_{2n}^{(2r)}(\A)$ thus comes equipped with projections 
$p^{(r)}:Sp_{2n}^{(2r)}(\A)\to Sp_{2n}^{(r)}(\A)$, $p^{(2)}:Sp_{2n}^{(2r)}(\A)\to Sp_{2n}^{(2)}(\A)$ that are homomorphisms.  We also use $p^{(1)}$ for the projection from $SO_k^{(r)}(\A)\to SO_k(\A)$.
Last, for fixed $k$ odd we introduce the group $\widetilde{Sp}_{2n}^{(2r)}(\A)$ by using the 2-cocycle $\rho\sigma_{2n}^{(2)}\delta_k$. 
Once again the map $i(g,\epsilon)=(g,\delta_k(g)\epsilon)$ is an isomorphism from 
${Sp}_{2n}^{(2r)}(\A)$ to $\widetilde{Sp}_{2n}^{(2r)}(\A)$.  

As noted above, the group $Sp_{2n}(F)$ embeds in ${Sp}_{2n}^{(\kappa r)}(\A)$ for $\kappa=1,2$.  
Though ${Sp}_{2n}^{(\kappa r)}$  is not an algebraic group, we abuse the notation slightly and write [$Sp_{2n}^{(\kappa r)}]$ for the automorphic quotient
$Sp_{2n}(F)\backslash Sp_{2n}^{(\kappa r)}(\A)$.  Also, $\delta_k(Sp_{2n}(F))=1$ (see Sweet \cite{Sw}, Proposition 2.4.2), so $i$ induces a bijection of the 
automorphic quotients of $Sp_{2n}^{(r)}(\A)$ and $\widetilde{Sp}_{2n}^{(r)}(\A)$.

We conclude this section by mentioning several additional groups of matrices that embed in the covering groups.
First, for any local field $F_\nu$ and the cover $SL^{(r)}_d(F_\nu)$ described above, any upper unipotent subgroup $N(F_\nu)$ of $SL_d(F_\nu)$ is canonically split by the trivial section 
$u\mapsto (u,1)$.  
Passing to the adelic group requires changing each local cocycle by a coboundary, so the group $N(\A)$ splits in $SL^{(r)}_d(\A)$
by means of a section of $N(\A)$, $n\mapsto (n,\eta(n)^{-1})$.  Moreover, this section is canonical (M\oe glin and Waldspurger
\cite{M-W}, Appendix 1).  Any lower unipotent group is also split by a canonical section
$n_{-}\mapsto (n_{-},\eta(n_{-}))$ and these splittings are compatible with the action of the Weyl group (which embeds in the covering group) by conjugation.
That is, if $n_{-}$ is lower triangular and $^w n_{-}:=wn_{-}w^{-1}$ is in $N(\A)$, then $^w(n_{-},\eta(n_{-}))=(^w n_{-},\eta(^w n_{-}))$.  (See for example, \cite{Kap}, equation (1.11).)
From now on, we consider all unipotent groups as embedded in the relevant covering groups by such
sections (and once again do not introduce a separate notation).   With this convention, all notions related to unipotent orbits extend to covering groups without change. 

Similarly, working over $F_{\nu}$ or $\A$, for $a\neq0$, let $t(a)=\text{diag}(a,a^{-1},\ldots,a,a^{-1})\in Sp_{2l}$ ($l\geq1$). If $a_1,a_2\in F_\nu^*$ are $r$-th powers, then
$\sigma^{(r)}_{2l,\nu}(t(a_{1,\nu}),t(a_{2,\nu}))=1$ (this follows from the well-known formula for the cocycle 
$\sigma^{(r)}_{2l,\nu}$ on diagonal matrices; see for example \cite{B-L-S}, Section 3).  
This equality implies that the map $\eta_\nu$ restricted to the group $\{t(a)\mid a\in O_{\nu}^{*,r}\}$ is a homomorphism.
Using this observation and the explicit description of $\eta_\nu$ on $SL_2(O_\nu)$ (due to Kubota), it follows that $\eta_\nu(t(a_{\nu}))=1$ for each place $\nu$ such that $a_\nu\in O_\nu^{*,r}$.  
 Accordingly, the map $t(a)\mapsto \widetilde{t}(a):=(t(a),\eta^{-1}(t(a)))$ is a well-defined embedding of $\{t(a)\mid \A^{*,r}\}$ into $Sp_{2l}^{(r)}(\A)$.
 This embedding will appear in the proof of 
Proposition~\ref{theta04} below.

\section{Description of the Integral Kernel}\label{basic2}

As mentioned in the introduction, the integral kernel we work with here requires two different  theta functions.  The first is a function on the
 metaplectic double cover $Sp_{2l}^{(2)}(\A)$ of $Sp_{2l}(\A)$.
Let $\psi$ denote a nontrivial character of $F\backslash {\A}$.  Let
$\Theta_{2l}^{(2)}$ denote the theta representation defined on the group $Sp_{2l}^{(2)}({\A})$ formed using $\psi$. 
We refer to \cite{G-R-S1}, Section 1, part 6, for the definitions and the action of the Weil representation.  This representation extends to the group ${\mathcal H}_{2l+1}({\A})\cdot Sp_{2l}^{(2)}({\A})$.
The theta kernel will involve a function $\theta_{2l}^{(2),\psi}$ in $\Theta_{2l}^{(2)}$.

To give the second, let $r>1$ be a fixed odd integer, and suppose as above that the number field $F$ contains a full set of $r$-th roots of unity $\mu_r$.
Fix an embedding $\epsilon:\mu_r\to\C^\times$.
Let $\Theta_{2l}^{(r)}$ be the theta representation on the group $Sp_{2l}^{(r)}({\A})$ that is genuine with respect to $\epsilon$; that is, the functions $\theta^{(r)}$ in $\Theta_{2l}^{(r)}$ transform
by central character $\epsilon$, i.e.,
$\theta^{(r)}((I_{2l},\mu)g)=\epsilon(\mu)\,\theta^{(r)}(g)$.
This representation is defined via residues of the minimal parabolic Eisenstein series on $Sp_{2l}^{(r)}({\A})$,
similarly to the construction for $GL_r$ in \cite{K-P}. 
For its definition and basic properties, see \cite[Section 2]{F-G2}; see also Gao \cite{Gao2}.  
We will discuss the Fourier coefficients attached
to different unipotent orbits for this representation in Section~\ref{conjtheta} below.

To describe the global construction, we work with the groups $U_{a,b,c}$ defined just before \eqref{mat4} above.
Define a character $\psi_{U_{a,b,c}}$ of the group 
$[U_{a,b,c}]$ as follows. Given $u\in U_{a,b,c}$, write 
$u=u_{a,c}'(Y,Z)\prod_i u_{a,b,c}^i(X_i)u_1$ as in \eqref{mat2}. Define  
\begin{equation}\label{character}
\psi_{U_{a,b,c}}(u)=\psi(\text{tr}(X_1+\cdots +X_{b-1})). 
\end{equation}
By restriction, this character is also a character of $[U_{a,b,c}^0]$.
We will be concerned with a specific choice of the numbers $a,b$ and $c$. 

Fix two integers $k$ and $n$, which will index the sizes of the orthogonal and symplectic groups, resp., as in Section~\ref{basic}. Let 
$r_1=(r-1)/2$, and set $a=k,\ b=r_1$ and $c=n$. In our main construction, we will take a Fourier coefficient corresponding to the unipotent orbit $((r-1)^k1^{2n})$ of the group $Sp_{2n+k(r-1)}$  
(see, for example, \cite{G}). This is an integral over $[U_{k,r_1,n}]$ against the character $\psi_{U_{k,r_1,n}}$.
It follows from \cite{C-M} that the image of $SO_k\times Sp_{2n}$ under the embedding $\iota_2$ is in the stabilizer of this orbit and this Fourier coefficient,
 so the Fourier coefficient gives rise to a function on this product.   In view of our work with cocycles above, this extends to covering groups. 

We are now ready to discuss our global integrals.   Fix $k$ and let $\kappa=1$ if $k$ is even, and $\kappa=2$ if $k$ is odd. 
Let $g\in Sp_{2n}^{(\kappa r)}(\A)$ and $h\in SO_{k}^{(r)}({\A})$.  Then
 the Fourier coefficient of interest to us, corresponding to the unipotent orbit $((r-1)^k1^{2n})$, is
\begin{equation}\label{lift1}
\int\limits_{[U_{k,r_1,n}]}
\theta_{2nk}^{(2),\psi}(l(u)\iota^{(2)}_1(p^{(1)}(h),p^{(\kappa)}(g)))\,
\theta_{2n+k(r-1)}^{(r)}(u\,\iota^{(r)}_2(h,p^{(r)}(g)))\,\psi_{U_{k,r_1,n}}(u)\,du.
\end{equation}
Here $\theta_{2n+k(r-1)}^{(r)}$ is a vector in the space of the representation $\Theta_{2n+k(r-1)}^{(r)}$,  $\theta_{2nk}^{(2),\psi}$ is a vector in the space of the representation $\Theta_{2nk}^{(2)}$
(which depends on $\psi$), and the map $l$ is as in \eqref{Heisenberg-hom} with $a=k$, $b=r_1$ and $c=n$.   Note that both of these theta functions are genuine functions. 
We shall use this Fourier coefficient as our integral kernel.

To formulate the theta lift,  
let $\pi^{(\kappa r)}$ denote a genuine irreducible cuspidal automorphic representation of ${Sp}_{2n}^{(\kappa r)}({\A})$
where the metaplectic groups are constructed in the prior section.  For fixed $k$, by means of the isomorphism $i$ we may realize  $\pi^{(\kappa r)}$ by means of genuine
automorphic functions on the group $\widetilde{Sp}_{2n}^{(\kappa r)}({\A})$, and we do so henceforth.
Here genuine means that the functions in $\pi^{(\kappa r)}$ transform under the center of 
$\widetilde{Sp}_{2n}^{(\kappa r)}({\A})$,
$\{(1,\mu)\mid\mu\in\mu_{\kappa r}\}$,
by a fixed embedding $\epsilon':\mu_{\kappa r}\to\C^\times$ which is compatible with $\epsilon$ and, if $k$ is odd, with the isomorphism of $\mu_{2r}$ with $\mu_r\times \mu_2$ selected above.
Specifically, let $p_2$ denote the projection from any covering group to its second factor (a root of unity of $F$).  Then
we require that  $\epsilon'(\mu)=\epsilon(p_2(p^{(r)}(1,\mu)))\,p_2(p^{(\kappa)}(1,\mu))$ for all $\mu\in \mu_{\kappa r}$.
Let $\varphi^{(\kappa r)}$ denote a vector in the space of $\pi^{(\kappa r)}$.
 
\begin{definition}
The theta lift of $\pi^{(\kappa r)}$ is the representation $\sigma_{n,k}^{(r)}$ of $SO_{k}^{(r)}({\A})$ generated by the functions $f(h)$ defined by
\begin{multline}\label{lift2}
f(h)=
\int\limits_{[Sp_{2n}^{(\kappa r)}]}
\int\limits_{[U_{k,r_1,n}]}\\
\overline{\varphi^{(\kappa r)}(i(g))}\, 
\theta_{2nk}^{(2),\psi}(l(u)\iota^{(2)}_1(p^{(1)}(h),p^{(\kappa)}(g)))\,
\theta_{2n+k(r-1)}^{(r)}(u\,\iota^{(r)}_2(h,p^{(r)}(g)))\,\psi_{U_{k,r_1,n}}(u)\,du\,dg.
\end{multline}
as each of $\varphi^{(\kappa r)}$, $\theta_{2nk}^{(2),\psi}$, $\theta_{2n+k(r-1)}^{(r)}$ 
varies over the functions in its representation space. 
\end{definition} 
\noindent
For convenience we sometimes shorten the notation in working with \eqref{lift2} below, writing 
$\iota_1(h,g)$ in place of $\iota^{(2)}_1(p^{(1)}(h),p^{(\kappa)}(g)))$ and $\iota_2(h,g)$ in place of 
$\iota^{(r)}_2(h,p^{(r)}(g)))$.

In \eqref{lift2} the covers in $g$ are compatible (otherwise this integral would vanish identically for trivial reasons). 
Also, the integral converges absolutely. This follows from the cuspidality of the representation $\pi^{(\kappa r)}$. Each function $f$ is a genuine function on $SO_k^{(r)}(\A)$.
The construction given by the space generated by the integrals \eqref{lift2}
defines a mapping from the set of irreducible cuspidal representations of 
${Sp}_{2n}^{(\kappa r)}({\A})$ to the set of representations of the quotient $SO_k(F)\backslash SO_k^{(r)}({\A})$. In the following Sections we will study the properties of this mapping.

As mentioned in the introduction, this construction may be viewed as an extension of the well-known theta lift associated to the reductive dual pair $(SO_k,Sp_{2n})$ if $k$ is even, and  to the 
reductive dual pair $(SO_k,Sp_{2n}^{(2)})$ if $k$ is odd. Indeed, in integral \eqref{lift2} we assumed that $r\ge 3$ is odd. 
If we formally set $r=1$, let $\Theta_{2n+k(r-1)}^{(r)}$ be the trivial representation, and let $U_{k,0,n}$ be the trivial group, then we get the theta lift.  
Just as the theta lift can be used to go from either group in the reductive dual pair to the other, 
we could also use the same integral kernel to construct a mapping from the set of irreducible cuspidal automorphic representations of $SO_k^{(r)}({\A})$ to automorphic representations of 
${Sp}_{2n}^{(\kappa r)}({\A})$.

There is also a local analogue of this integral given by a Hom space, which naturally gives rise to a map of representations 
over local fields that formally generalizes the Howe correspondence.  This will be treated for unramified principal series in Section~\ref{the-unramified-corr} below.

To conclude this section, let us mention that the theta kernel above is consistent with the Dimension Equation of \cite{G}, \cite{F-G4}.  As explained in \cite{F-G4}, Section 6, 
the Dimension Equation
states that when a theta kernel gives a correspondence, there is an equality relating the dimensions of the groups in the integral and the dimensions of the automorphic representations appearing
in and arising from the construction.  Here the dimension of an automorphic representation means its Gelfand-Kirillov dimension in the sense of \cite{G}.  
In our case, this is the equality
$$\dim(\pi)+ \dim(\Theta_{2nk}^{(2)})+ \dim(\Theta_{2n+k(r-1)}^{(r)}) =
\dim(Sp_{2n})+\dim (U_{k,r_1,n})+ \dim(\sigma),$$
where $\pi$, $\sigma$ are as above.
In the case that $\pi$ and $\sigma$ are both generic, and when $n=[k/2]$,
we expect a functorial correspondence 
between $SO_{k}^{(r)}$ and $Sp_{2n}^{(\kappa r)}$. And indeed, it may be checked
(using formulas (2), (3) in \cite{F-G4}) that in this situation
the above equation holds, assuming Conjecture~\ref{conj1} below.  
This equality is another motivation for
choosing the orbit $((r-1)^{k}1^{2n}$) for the
unipotent integration in \eqref{lift1}.

\section{The Unipotent Orbit of The Theta Representation}\label{conjtheta}

In this section we discuss the Fourier coefficients of the automorphic theta representation $\Theta_{2l}^{(r)}$.  
These coefficients, obtained by integrating functions in the representation space against certain characters of unipotent groups, 
 are indexed by unipotent orbits, which may be described by means of certain partitions of $2l$; see, for example,  \cite{G} for more information. 
The set of unipotent orbits of $Sp_{2l}$ is a partially ordered set. Let ${\mathcal O}(\Theta_{2l}^{(r)})$ denote the set of unipotent orbits ${\mathcal O}$ which are 
maximal with respect to the property that
$\Theta_{2l}^{(r)}$ has a non-zero Fourier coefficient with respect to ${\mathcal O}$.  We expect that this set is a singleton. More precisely,  we make the following
conjecture.
\begin{conjecture}\label{conj1}  Let $r>1$ be an odd integer, and write $2l=\alpha r+\beta$ where $0\le \beta< r$.
Let ${\mathcal O}_c(\Theta_{2l}^{(r)})$ denote the unipotent orbit
$${\mathcal O}_c(\Theta_{2l}^{(r)})=\begin{cases}(r^\alpha \beta)&\text{if $\alpha$ is even}\\
(r^{\alpha -1}(r-1)(\beta+1))&\text{if $\alpha$ is odd.}\end{cases}$$
Then ${\mathcal O}(\Theta_{2l}^{(r)})=\{{\mathcal O}_c(\Theta_{2l}^{(r)})\}$.
\end{conjecture}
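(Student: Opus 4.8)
The plan is to prove the conjecture in two halves: an \emph{upper bound}, that every orbit in ${\mathcal O}(\Theta_{2l}^{(r)})$ is dominated by ${\mathcal O}_c(\Theta_{2l}^{(r)})$ --- equivalently, that $\Theta_{2l}^{(r)}$ has no nonzero Fourier coefficient along any orbit strictly dominating ${\mathcal O}_c$ --- and a \emph{non-vanishing} statement, that $\Theta_{2l}^{(r)}$ does have a nonzero Fourier coefficient along ${\mathcal O}_c$ itself. One checks that $\beta$ is even exactly when $\alpha$ is even, so that in either case the prescribed data is a genuine symplectic partition of $2l$, and that it is the largest partition of the two allowed shapes; hence the two bounds together give ${\mathcal O}(\Theta_{2l}^{(r)}) = \{{\mathcal O}_c(\Theta_{2l}^{(r)})\}$.

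For the non-vanishing half I would argue by induction on $l$, using the realization of $\Theta_{2l}^{(r)}$ as an iterated residue of the Borel Eisenstein series on $Sp_{2l}^{(r)}(\A)$ together with its constant-term formula (see \cite{F-G2}, \cite{Gao2}). Peeling off a suitable maximal parabolic $P_{m,1,l-m}$, whose Levi is a cover of $GL_m \times Sp_{2(l-m)}$, the constant term of $\Theta_{2l}^{(r)}$ along $U_{m,1,l-m}$ decomposes, and its relevant summand is, up to a character twist, $\theta^{(r)}_{GL_m} \otimes \Theta_{2(l-m)}^{(r)}$. On the symplectic factor the inductive hypothesis supplies a nonzero Fourier coefficient along ${\mathcal O}_c(\Theta_{2(l-m)}^{(r)})$; on the general linear factor one uses that the theta representation of the $r$-fold cover of $GL_r$ has a nonzero Whittaker coefficient (Kazhdan--Patterson \cite{K-P}; see also Suzuki \cite{Su1}, \cite{Su2}). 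Gluing these and tracking the parts produces a nonzero Fourier coefficient of $\Theta_{2l}^{(r)}$ along ${\mathcal O}_c(\Theta_{2l}^{(r)})$, and the parity dichotomy in the definition of ${\mathcal O}_c$ is exactly what makes the families $(r^{\alpha}\beta)$ and $(r^{\alpha-1}(r-1)(\beta+1))$ close up under this recursion. Throughout, the Dimension Equation of \cite{G}, \cite{F-G4} --- here the identity $\dim\Theta_{2l}^{(r)} = \tfrac12\dim{\mathcal O}_c(\Theta_{2l}^{(r)})$ --- serves as a consistency check on the bookkeeping.

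For the upper bound it suffices to prove that the Fourier coefficient of $\Theta_{2l}^{(r)}$ vanishes along each symplectic orbit ${\mathcal O}'$ that is minimal among those strictly dominating ${\mathcal O}_c$. For each such ${\mathcal O}'$ I would write out the defining unipotent integral and apply the standard root-exchange and Fourier-expansion identities (as in \cite{G-R-S1}, \cite{G}) to reduce it to Fourier coefficients of theta representations on covers of smaller general linear and symplectic groups, taken along orbits those representations do not support --- the symplectic inputs handled by the inductive vanishing hypothesis, the general linear inputs requiring precise knowledge of the Fourier support of $GL^{(r)}$-theta representations (recently studied by Cai \cite{C1}). I expect this step to be the main obstacle: the orbits ${\mathcal O}'$ to be ruled out, and the reductions they demand, proliferate as $r$ grows; the root-exchange reductions need not terminate at a configuration whose vanishing is already known; and the requisite $GL$-cover input is itself only partially available. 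This is why Theorem~\ref{theta05} should prove the conjecture outright only for $r = 3, 5$, where the base cases are short, and obtain partial results otherwise. A secondary difficulty is that $\Theta_{2l}^{(r)}$ is reducible and its local constituents are not well understood, so the argument must proceed globally, through the residue and constant-term description, rather than via local twisted Jacquet modules.
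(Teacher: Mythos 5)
The statement you set out to prove is Conjecture~\ref{conj1}, which the paper itself does not resolve; what it proves is Theorem~\ref{theta05}, namely the upper bound in full and the nonvanishing only in special cases. Your proposal is likewise only a plan, and measured against the paper its division of labor is inverted. The upper-bound half is not the obstacle: Proposition~\ref{theta01} proves it completely and unconditionally for all $l$ and all odd $r$, and it does so by exactly the route you exclude --- reducing via \cite{G-R-S3}, Lemmas 2.4 and 2.6, to the orbits $((2k)1^{2l-2k})$ with $2k>r$, and then passing to a \emph{local} statement, the vanishing of twisted Jacquet modules at a good unramified place, proved through Fourier--Jacobi modules and a supercuspidality argument resting on Gao's non-genericity result (Lemma~\ref{theta-generic}) and Leslie's lemma that unramified genuine representations of $Sp_{2l-2m}^{(2r)}(F_\nu)$ are not supercuspidal (Lemma~\ref{supercuspidal-lemma}); no input on the Fourier support of $GL^{(r)}$ theta representations beyond genericity from \cite{K-P} is needed. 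Your claim that the argument ``must proceed globally, rather than via local twisted Jacquet modules'' is therefore contrary to what actually works. There is also a logical gap at the outset: ``every orbit in ${\mathcal O}(\Theta_{2l}^{(r)})$ is dominated by ${\mathcal O}_c$'' is \emph{not} equivalent to ``no nonzero coefficient on orbits strictly dominating ${\mathcal O}_c$''; you must also rule out orbits incomparable with ${\mathcal O}_c$ (Proposition~\ref{theta01} does precisely this), since otherwise the two halves of your plan only give ${\mathcal O}_c\in{\mathcal O}(\Theta_{2l}^{(r)})$, not that the set is the singleton $\{{\mathcal O}_c\}$.

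The genuinely open part is the nonvanishing, and your induction does not reach it. Your constant-term/Whittaker gluing is essentially Proposition~\ref{theta02} of the paper (the conjecture for $(r,l)$ implies it for $(r,l+r)$), and even there the gluing of a Whittaker coefficient of $\Theta_{GL_r}^{(r)}$ with a coefficient of $\Theta_{2l}^{(r)}$ does not by itself yield a coefficient on the enlarged orbit: one needs the Jiang--Liu composite-partition result (Proposition~\ref{j-l}), root exchanges, and the already-established vanishing of Proposition~\ref{theta01} to convert \eqref{theta2} into \eqref{theta20}; the dimension equation is only a heuristic check, not a substitute. More importantly, this recursion merely reduces the conjecture to the base range $0\le l\le r-1$, and it is exactly these base cases that are hard and remain open in general. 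The paper attacks them by quite different tools --- the $(l^2)$ coefficient via a Weyl conjugation and the genericity of the $GL_l^{(r)}$ theta representation (Proposition~\ref{theta03}), the genuine-$SL_2$-stabilizer argument for $l$ odd and the torus-scaling comparison forcing $l=r-1$ for $l$ even (Proposition~\ref{theta04}) --- and these settle only $l\in\{0,1,2,r-3,r-2,r-1\}$ modulo the periodicity, whence the restriction to $r=3,5$; the first open case is the genericity of $\Theta_6^{(7)}$, where only the unramified local constituents are known to be generic by \cite{Gao2}. So the restriction to $r=3,5$ comes entirely from the nonvanishing side, not from the vanishing side as your proposal asserts, and the point where you write that the families ``close up under this recursion'' is precisely where a new idea is required.
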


This conjecture is also presented in \cite{F-G2}. Gao and Tsai \cite{G-T} have recently generalized Conjecture~\ref{conj1}  to other groups, and also given an 
archimedean analogue.  The compatibility of their conjecture with Conjecture~\ref{conj1}
is established in Section 4 of their paper.

Although we are not able to prove Conjecture~\ref{conj1} in general, we can prove it in some cases, as follows.

\begin{theorem}\label{theta05}
\begin{enumerate}
\item
For all positive integers $l$, if $\mathcal{O}\in {\mathcal O}(\Theta_{2l}^{(r)})$, then $\mathcal{O}\le {\mathcal O}_c(\Theta_{2l}^{(r)})$.
\item Assume that $l=0,1,2,r-3,r-2,r-1$. Let $n$ denote a non-negative integer, and assume that if $l=0$, then $n\ge 1$. 
Then Conjecture \ref{conj1} holds for the group $Sp_{2(l+nr)}^{(r)}({\A})$. In particular, when $r=3,5$ the conjecture holds for all $l$ and $n$.
\end{enumerate}
\end{theorem}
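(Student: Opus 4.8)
The plan is to bound the unipotent orbits of $\Theta_{2l}^{(r)}$ from above by a ``root exchange'' / ``cancellation'' argument on Fourier coefficients of Eisenstein series and their residues, and then for the small values of $l$ to establish non-vanishing of the conjectured coefficient by an explicit computation. For part (1), I would argue as follows. Since $\Theta_{2l}^{(r)}$ is defined as an iterated residue of the minimal parabolic Eisenstein series on $Sp_{2l}^{(r)}(\A)$, any Fourier coefficient of a function in $\Theta_{2l}^{(r)}$ is obtained as the residue of the corresponding Fourier coefficient of the Eisenstein series; the constant term formula (as in Gao's work, or \cite{F-G2}) controls which residues survive. Concretely, suppose $\mathcal O$ is a unipotent orbit strictly larger than $\mathcal O_c(\Theta_{2l}^{(r)})$ in the closure ordering. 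I would choose a convenient representative of $\mathcal O$ (via its Young diagram / $\mathfrak{sl}_2$-triple) and the associated unipotent subgroup $V_\mathcal O$ with its generic character $\psi_\mathcal O$. The goal is to show the integral of any $\theta^{(r)} \in \Theta_{2l}^{(r)}$ against $\psi_\mathcal O$ over $[V_\mathcal O]$ vanishes. The mechanism: using the structure of the orbit, one writes the integral so that an inner integration is over a unipotent subgroup against a character that is \emph{trivial} when restricted there because the relevant root spaces do not appear in the character; then a ``root exchange'' lemma (swapping a unipotent integration for a dual one, as in \cite{G-R-S} and used throughout \cite{F-G2}) replaces this by an integral that, after applying the known support of $\Theta_{2l}^{(r)}$ at a smaller parabolic level (inductively, or from the constant term), is seen to vanish. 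The combinatorial heart is matching the arithmetic condition $2l = \alpha r + \beta$ with the shape of $\mathcal O_c$: when $\alpha$ is even the orbit is $(r^\alpha \beta)$, and the constraint that blocks have size $\le r$ (coming from the order of the residue, i.e.\ the number of simple reflections whose residue is taken) is exactly what forbids larger orbits; when $\alpha$ is odd the parity obstruction on symplectic orbits forces the modification $(r^{\alpha-1}(r-1)(\beta+1))$.

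For part (2), the strategy is induction on $n$ with base cases $l \in \{0,1,2,r-3,r-2,r-1\}$, $n$ small, reducing the general case $Sp_{2(l+nr)}^{(r)}$ to the case $Sp_{2(l+(n-1)r)}^{(r)}$. The inductive step should come from a relation, at the level of theta representations, of the form: a suitable Fourier coefficient of $\Theta_{2(l+nr)}^{(r)}$ along the unipotent radical of the parabolic with Levi $GL_r \times Sp_{2(l+(n-1)r)}$ (taken against the appropriate character) is, up to inner automorphism and convergence issues, an integral involving $\Theta_{2(l+(n-1)r)}^{(r)}$ tensored with a theta representation on a $GL_r^{(r)}$ factor (whose orbit is $(r)$, a single row). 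Because the conjectured orbit $\mathcal O_c(\Theta_{2(l+nr)}^{(r)})$ is obtained from $\mathcal O_c(\Theta_{2(l+(n-1)r)}^{(r)})$ by adjoining one part of size $r$ (note the value of $\alpha \bmod 2$ is unchanged when passing from $l + (n-1)r$ to $l+nr$), non-vanishing propagates upward: a nonzero $\psi_\mathcal O$-coefficient downstairs yields a nonzero coefficient upstairs for the enlarged orbit, and part (1) shows we cannot do better. So the real content is the finite list of base cases. For these, one computes the Fourier coefficient of $\Theta_{2l}^{(r)}$ along $\psi_{\mathcal O_c}$ directly from the residue definition: unfold the Eisenstein series, identify which Weyl group / cell contributes a nonzero residue at the relevant point, and verify the resulting local integrals (essentially Gindikin--Karpelevich type factors and local Whittaker-type integrals on the cover) are nonzero. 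For $l = 0$ this is the statement that $\Theta_{2nr}^{(r)}$ has orbit $(r^n)$ (with $n$ even) or $(r^{n-1}(r-1)(1))$ (with $n$ odd), which is the ``most even'' case and should be the cleanest; the cases $l = r-3, r-2, r-1$ are dual/complementary in size to $l = 2,1,0$ modulo $r$ and should admit parallel treatment.

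I expect the main obstacle to be part (2), specifically making the inductive reduction rigorous: controlling the residue operation as it interacts with the Fourier expansion along the $GL_r$-parabolic, justifying the interchange of the residue limit with the unipotent integrations, and identifying the ``leftover'' piece precisely as (a twist of) $\Theta^{(r)}$ on the smaller symplectic group times a theta function on $GL_r^{(r)}$ rather than something larger. A secondary difficulty is the explicit non-vanishing of the base-case integrals, which on an $r$-fold cover cannot appeal to uniqueness of Whittaker models; one must instead exhibit an explicit vector (or use the known Whittaker coefficient of $\Theta^{(r)}$, e.g.\ from \cite{K-P}-type computations on the cover) for which the coefficient is manifestly nonzero. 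The restriction to the listed values of $l$ reflects exactly the range in which these base-case computations are tractable with current technology — for $r = 3, 5$ this list is all residues mod $r$, which is why the conjecture follows unconditionally in those cases. Throughout, part (1) is used as the matching upper bound so that each non-vanishing result pins down $\mathcal O(\Theta_{2l}^{(r)})$ to the singleton $\{\mathcal O_c(\Theta_{2l}^{(r)})\}$.
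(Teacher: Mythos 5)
Your overall architecture (an upper bound for all $l$, an induction that adds $r$ to $l$ via the $GL_r\times Sp$ parabolic, and a finite list of base cases pinned down against the upper bound) does match the paper, and your inductive step is close in spirit to its Proposition~\ref{theta02} — though note that passing from $l+(n-1)r$ to $l+nr$ adjoins \emph{two} parts of size $r$ (odd parts of symplectic partitions have even multiplicity), and the paper needs a genuine technical input here that you gloss over: the Jiang--Liu result (Proposition~\ref{j-l}) identifying the coefficient attached to the composite partition $(r^2 1^{2l})\circ\mathcal{O}_c(\Theta_{2l}^{(r)})$ with the coefficient attached to $\mathcal{O}_c(\Theta_{2(l+r)}^{(r)})$, together with root exchange and the vanishing statement of part (1) to kill the error terms. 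More seriously, your mechanism for part (1) is not a proof: the bound does not follow formally from ``the order of the residue forbids blocks of size $>r$.'' In the paper the vanishing is first reduced (via the G--R--S lemmas) to the orbits $((2k)1^{2l-2k})$ with $2k>r$, and then proved \emph{locally} at an unramified place by a Fourier--Jacobi descent induction whose engine is two nontrivial external inputs: Gao's theorem that the local theta representation is non-generic when $2l>r$, and Leslie's theorem that a nonzero unramified genuine representation of $Sp_{2m}^{(2r)}(F_\nu)$ cannot be supercuspidal (one shows the Fourier--Jacobi modules would be unramified and supercuspidal, hence zero). Nothing in your sketch supplies a substitute for these inputs.

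The second gap is the base cases, which are the real content of part (2). Direct unfolding of the residue Eisenstein series to compute a $\psi_{\mathcal{O}_c}$-coefficient on an $r$-fold cover is exactly what is \emph{not} feasible (this is why the conjecture is open in general), and ``dual/complementary treatment'' of $l=r-3,r-2,r-1$ is not how they are handled. The paper's arguments are indirect: (i) for $1\le l\le r$, nonvanishing at $(l^2)$ is proved by contradiction — if it vanished, Weyl conjugation and root exchange would force the Whittaker coefficient of the theta representation of $GL_l^{(r)}(\A)$ to vanish, contradicting Kazhdan--Patterson genericity for $l\le r$; (ii) for $l$ odd, the coefficient at $(l^2)$ is a genuine automorphic function on the $SL_2^{(r)}(\A)$ sitting in the stabilizer, hence non-constant, hence has a Whittaker coefficient, which after root exchange upgrades the orbit to $((l+1)(l-1))$; (iii) for $l$ even, a scaling argument with $\widetilde{t}(a)$, $a$ an $r$-th power, computes the coefficient two ways and forces the exponent identity $\tfrac{l(l+1)(r-1)}{2r}=\tfrac{l^2}{2}$, i.e.\ $l=r-1$, so for $l$ even, $l\neq r-1$ the orbit climbs to $((l+2)(l-2))$. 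It is precisely these three mechanisms (and the special role of $l=r-1$ in (iii)) that explain why the listed values $l=0,1,2,r-3,r-2,r-1$ — and hence all $l$ when $r=3,5$ — are accessible; your proposal does not contain them, and your fallback appeal to ``known Whittaker coefficients of $\Theta^{(r)}$'' only covers the case where $\mathcal{O}_c$ is the regular orbit, which is not the situation for most of the base cases.
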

The first case for which we cannot prove Conjecture~\ref{conj1} is the group $Sp_6^{(7)}({\A})$, where the conjecture states that 
$\Theta_{6}^{(7)}$ is generic. The unramified constituents of $\Theta_{6}^{(7)}$  are generic by \cite{Gao2}, but we do not know of a global result.

We start
with the vanishing property of the representations $\Theta_{2l}^{(r)}$.  The proof requires some local preparations.
If $U$ is any unipotent subgroup of a $p$-adic group or a metaplectic cover $G$,
$\lambda$ is a character of $U$,  and $(\pi,V)$ is a representation of $G$,
then the Jacquet module $J_{U,\lambda}(\pi)$ is the quotient of $V$ by  the subspace spanned by vectors of the form $\pi(u)v-\lambda(u)v$ with $u\in U$, $v\in V$. 
When $\lambda$ is nontrivial, sometimes we refer to this as the twisted Jacquet module.
When $\lambda$ is trivial we omit it from the notation, and denote this quotient as $J_U(\pi)$, the untwisted Jacquet module. 
The map $V\mapsto J_{U,\lambda}(\pi)$ is the Jacquet functor.

Let $\nu$ be a finite place of $F$, $\Theta^{(r)}_{GL_m,\nu}$ denote the local theta representation on the $r$-fold cover of $GL_m(F_\nu)$ treated by Kazhdan and Patterson \cite{K-P}, Section~I, with $c=0$ (or the corresponding character if $m=1$), and $\Theta_{2l,\nu}^{(r)}$ be the local theta representation for $Sp_{2l}^{(r)}(F_\nu)$.  Then we have the following 
proposition.

\begin{proposition}\label{constant-term-theta} Let $P$ be the unipotent radical of $Sp_{2l}$ whose Levi factor is $GL_m\times Sp_{2l-2m}$, $1\leq m\leq l$, and let $U$ be its unipotent radical.  Then 
$$J_U\left(\Theta_{{2n},\nu}^{(r)}\right)=\Theta^{(r)}_{GL_m,\nu}\otimes \Theta^{(r)}_{{2l-2m},\nu}.$$ Here the factor $\Theta^{(r)}_{{2l-2m},\nu}$ is omitted if $m=l$.
\end{proposition}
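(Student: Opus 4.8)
The plan is to compute the constant term $J_U(\Theta_{2l,\nu}^{(r)})$ along the parabolic $P$ with Levi $GL_m \times Sp_{2l-2m}$ by exploiting the realization of $\Theta_{2l,\nu}^{(r)}$ as (the local component of) a residue of the minimal parabolic Eisenstein series on $Sp_{2l}^{(r)}(F_\nu)$. The key mechanism is the Langlands constant term formula for Eisenstein series on covering groups (Gao \cite{Gao1}, following Langlands): the constant term along $P$ of such an Eisenstein series is a sum over the Weyl group elements in $W(M)\backslash W / W_{\min}$ of intertwining operators, and after taking residues only those terms that carry the pole survive. Concretely, I would (i) identify the inducing data for $\Theta_{2l,\nu}^{(r)}$ as a residue of $\mathrm{Ind}_{B}^{Sp_{2l}^{(r)}}(\delta_B^{s})$ at the appropriate exponent $s$ forced by the residue construction of \cite{F-G2}, Section 2, (ii) apply the constant-term formula, (iii) match the surviving term with the inducing datum that realizes $\Theta_{GL_m,\nu}^{(r)}$ on the $GL_m$ factor (the Kazhdan--Patterson theta with $c=0$) and $\Theta_{2l-2m,\nu}^{(r)}$ on the $Sp_{2l-2m}$ factor, again as residues of minimal-parabolic Eisenstein series on the respective covers. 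The point is that the exponent data defining $\Theta_{2l,\nu}^{(r)}$ restricts, along the Levi, precisely to the exponent data defining the product $\Theta_{GL_m,\nu}^{(r)}\otimes\Theta_{2l-2m,\nu}^{(r)}$.

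An alternative, and probably cleaner, route is to work entirely representation-theoretically with the local $\Theta$-representations as the unique irreducible quotients (or subquotients) of the relevant degenerate principal series, and to use the known description of their Jacquet modules along maximal parabolics. One shows first that $J_U(\Theta_{2l,\nu}^{(r)})$ is nonzero and has the claimed datum as a subquotient (by an explicit section / Whittaker-type argument, or by transitivity of Jacquet functors reducing to the $GL_1$ and $Sp_2$ cases where everything is explicit from Kubota's formulas), and then bounds $J_U$ from above by a central-character / exponent computation: any irreducible subquotient of $J_U(\Theta_{2l,\nu}^{(r)})$ must have central character on the center of $GL_m\times Sp_{2l-2m}$ compatible with that of $\Theta_{2l,\nu}^{(r)}$, and the minimality (small Gelfand--Kirillov dimension, encoded by the vanishing of Fourier coefficients in Theorem~\ref{theta05}(1) applied locally to twisted Jacquet modules) forces the $Sp$-part to again be a theta representation and the $GL_m$-part to be the $c=0$ Kazhdan--Patterson theta. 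Combining the two bounds gives equality.

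The case $m=l$ should be treated separately and is easier: here $U$ is the unipotent radical of the Siegel parabolic, the Levi is just $GL_l$, and $J_U(\Theta_{2l,\nu}^{(r)}) = \Theta_{GL_l,\nu}^{(r)}$ follows directly from the Siegel-parabolic constant term of the theta Eisenstein series, the $Sp_0$ factor being vacuous. For the inductive structure, I would note that the formula for general $m<l$ can be bootstrapped from the $m=1$ case by transitivity: factoring the parabolic $P_m \supset P_1$, applying the $m=1$ result to get $\Theta_{GL_1,\nu}^{(r)}\otimes\Theta_{2l-2,\nu}^{(r)}$, and then inducting on $l$ using the $GL$-side analogue of this constant-term computation (available from \cite{K-P} or \cite{Gao2}).

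The main obstacle I anticipate is the exact bookkeeping of the covering: the cocycle $\sigma_{2l}^{(r)}$ restricted to the Levi $GL_m \times Sp_{2l-2m}$ must be matched, up to coboundary, with the product of the standard cocycles on $GL_m^{(r)}$ and $Sp_{2l-2m}^{(r)}$, and one must check that the metaplectic $GL_m$-cover appearing on the Levi is exactly the Kazhdan--Patterson cover with parameter $c=0$ (not some twist), so that the residual representation there is genuinely $\Theta_{GL_m,\nu}^{(r)}$ as defined. This is where the block-compatibility property of the Banks--Levy--Sepanski cocycle (\cite{B-L-S}, Theorem 1) is essential, and it is the step most prone to normalization errors; everything else is a by-now-standard residue-of-Eisenstein-series computation.
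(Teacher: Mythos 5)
Your outline is correct and follows essentially the same route as the paper, which gives no independent argument but defers to \cite{B-F-G1}, Theorem 2.3 (with \cite{F-G2}, Proposition 1 as the global analogue): namely, the realization of $\Theta^{(r)}$ via the exceptional/residual datum, the constant-term (geometric lemma) filtration of the Jacquet module, identification of the surviving exponent with the inducing data of $\Theta^{(r)}_{GL_m}\otimes\Theta^{(r)}_{2l-2m}$, and the block-compatibility of the cocycle to identify the Levi covers. The cocycle bookkeeping you flag as the delicate point is indeed handled in the paper's setup by the Banks--Levy--Sepanski block compatibility, so no further idea is needed beyond what you propose.
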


The proof of this result follows exactly the proof of \cite{B-F-G1}, Theorem 2.3, which gives the same statement in a slightly different situation, and so is omitted here.  There is also
a global analogue; see for example \cite{F-G2}, Proposition 1.

The proof of vanishing requires two lemmas that are cases of more general results.  The first, due to Gao \cite{Gao2}, gives
 information about when an unramified local constituent of 
$\Theta_{2l}^{(r)}$ fails to be generic.  (Similar information for covers of the general linear group is due to Kazhdan and Patterson \cite{K-P}; see Theorem I.3.5 there.)

\begin{lemma}[Gao]\label{theta-generic}
Suppose that $2l>r$.  Then the local theta representation $\Theta^{(r)}_{{2l},\nu}$ at an unramfied place $\nu$ is not generic.
\end{lemma}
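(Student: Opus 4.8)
\emph{Proof idea (following Gao \cite{Gao2}).} The plan is to realize $\Theta^{(r)}_{2l,\nu}$ as the unramified (spherical) constituent of a genuine unramified principal series and then apply the standard criterion for genericity of a spherical constituent. Since $\Theta^{(r)}_{2l}$ is constructed as an iterated residue of the Borel Eisenstein series on $Sp^{(r)}_{2l}(\A)$, at an unramified place $\nu$ its local component $\Theta^{(r)}_{2l,\nu}$ is the spherical constituent of a genuine unramified principal series $I(\chi_\theta)=\Ind_{B}^{Sp^{(r)}_{2l}(F_\nu)}\chi_\theta$, where the metaplectic torus character $\chi_\theta$ is the ``theta exponent'' dictated by the location of the pole of the Eisenstein series (the analogue for $Sp_{2l}^{(r)}$ of the exponent computed for $GL_m^{(r)}$ in Kazhdan--Patterson \cite{K-P}, Section~I); concretely $\chi_\theta$ is a fixed negative power of the modulus character, restricted to the $r$-th powers of the torus.

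Next I would invoke the criterion that the spherical constituent $J(\chi_\theta)$ of $I(\chi_\theta)$ is $\psi_N$-generic if and only if the spherical vector is not annihilated by the Whittaker functionals of $I(\chi_\theta)$ that descend to $J(\chi_\theta)$. For covers of quasi-split groups the space of Whittaker functionals on $I(\chi_\theta)$ is spanned by Jacquet integrals attached to representatives for the action of the metaplectic Weyl group, and the value of such a functional on the spherical vector is an explicit product of rank-one $c$-functions (a Gindikin--Karpelevich type product for covers, of the sort underlying \cite{K-P} and its symplectic analogues used in \cite{Gao2}). The heart of the proof is then a computation at the singular point $\chi=\chi_\theta$: one checks that when $2l>r$ at least one rank-one factor acquires a vanishing numerator of the form $1-q_\nu^{-1}\chi_\theta(\cdot)$, so that every relevant functional kills the spherical line and $J(\chi_\theta)=\Theta^{(r)}_{2l,\nu}$ is not generic. (Since $r$ is odd, $2l=r$ does not occur; and when $2l<r$ the exponent $\chi_\theta$ stays outside the bad range, which is precisely why $\Theta^{(r)}_{2l,\nu}$ is generic in that case.) An alternative route, closer in spirit to the rest of this section, is an induction on $l$: a $\psi_N$-Whittaker functional on $\Theta^{(r)}_{2l,\nu}$ would, after a Fourier--Jacobi/root-exchange step, produce a Whittaker-type functional on a twisted Jacquet module of $\Theta^{(r)}_{2l,\nu}$ along the unipotent radical of a maximal parabolic, whose constant term is computed by Proposition~\ref{constant-term-theta} in terms of $\Theta^{(r)}_{GL_m,\nu}$ and $\Theta^{(r)}_{2l-2m,\nu}$; iterating and using the non-genericity of $\Theta^{(r)}_{GL_m,\nu}$ for $m>r$ from \cite{K-P} then forces a contradiction as soon as $2l>r$.

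The main obstacle is the bookkeeping at the singular point $\chi_\theta$. One must track precisely how the $r$-fold cover ``folds'' the torus characters and the Weyl-group orbits of roots --- this is what makes the threshold $2l>r$ (rather than a naive one) appear --- and must control the metaplectic subtleties in the structure of the Whittaker space. In the inductive variant, the analogous difficulty is that the \emph{untwisted} constant-term formula of Proposition~\ref{constant-term-theta} is not literally what is required: commuting the generic character past the unipotent radical produces a genuinely twisted Jacquet module, and it is in identifying that module and carrying out the root exchange that the hypothesis $2l>r$ is used.
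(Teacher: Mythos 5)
The paper does not actually prove this lemma; it is quoted from Gao \cite{Gao2}, the stated justification being the combination of Theorem 1.1 and Proposition 5.1 of that paper. So your first route --- realizing $\Theta^{(r)}_{2l,\nu}$ as the spherical (Langlands) constituent of a genuine exceptional principal series and deciding genericity by a Whittaker-functional computation at the exceptional character --- is in spirit a reconstruction of the cited argument rather than an alternative to it, and it is the right framework.

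However, as written there is a genuine gap at exactly the decisive step. The claim that ``when $2l>r$ at least one rank-one factor acquires a vanishing numerator, so that every relevant functional kills the spherical line'' is precisely the content one cites Gao for, and it is not a routine check: for covering groups the space of $\psi$-Whittaker functionals on $I(\chi_\theta)$ has dimension given by a lattice index (Whittaker models are far from unique), the functionals that descend to the quotient are governed by Gauss-sum-valued scattering matrices rather than by a single Gindikin--Karpelevich numerator, and non-genericity of the quotient is not equivalent to the functionals annihilating the spherical vector itself --- one must show the entire descended Whittaker space is zero (equivalently, the spherical Whittaker function vanishes identically). The threshold $2l>r$ only emerges from this bookkeeping, which your sketch defers. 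The alternative inductive route is weaker still: as you yourself note, Proposition~\ref{constant-term-theta} computes an \emph{untwisted} Jacquet module, while pushing the generic character past the unipotent radical produces a twisted module that is never identified, and non-genericity cannot be read off from constant terms alone; there is also no base case supplied. So the proposal identifies the correct strategy but does not contain a proof of the lemma; as in the paper, the statement ultimately rests on the results of \cite{Gao2}.
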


This follows directly from \cite{Gao2}  by combining (the more general) Theorem 1.1 with Proposition 5.1 there. 

The second is the following result, which is a special case of Leslie \cite{Leslie}, Proposition 6.5.

\begin{lemma}[Leslie]\label{supercuspidal-lemma}
Let $\pi$ be a non-zero smooth admissible genuine representation of $Sp_{2l}^{(2r)}(F_\nu)$ that is unramified.  Then $\pi$ is not supercuspidal.
\end{lemma}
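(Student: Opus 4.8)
The plan is to show, equivalently, that any nonzero smooth admissible genuine representation $\pi$ of $Sp_{2l}^{(2r)}(F_\nu)$ with a nonzero vector fixed by a hyperspecial maximal compact subgroup has nonvanishing Jacquet module along the Borel, and therefore cannot be supercuspidal. (We may assume $l\ge1$; for $l=0$ the group is trivial. Since $\pi$ is admissible, $\pi^K$ is a nonzero finite-dimensional module over the spherical Hecke algebra and hence has an irreducible subquotient, so it suffices to rule out supercuspidality of an irreducible unramified subquotient of $\pi$.) First I would fix the structures: at the unramified place $\nu$ the $2r$-fold cover of $Sp_{2l}(F_\nu)$ splits over $K_0=Sp_{2l}(\mathcal O_\nu)$ by the canonical section used throughout Section~\ref{basic}; write $K$ for its image in $Sp_{2l}^{(2r)}(F_\nu)$ and let $0\neq v_0\in\pi^K$. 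Fix the Borel $B=TN$ of $Sp_{2l}$ with $N$ the maximal unipotent; by Section~\ref{basic}, $N$ is canonically and compatibly split in the cover, so the preimage $\widetilde B=\widetilde T\, N$ of $B$ makes sense, with $\widetilde T$ the (Heisenberg-type) preimage of the maximal torus. Because $\mu_{2r}$ is central and lies in $\widetilde T$, the Iwasawa decomposition $Sp_{2l}(F_\nu)=B(F_\nu)\,K_0$ lifts at once to $Sp_{2l}^{(2r)}(F_\nu)=\widetilde B\, K$.

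The heart of the matter is to show that the image $\bar v_0$ of $v_0$ under the canonical projection $\pi\to J_N(\pi)$ is nonzero, i.e., that a spherical representation has a nonvanishing constant term along the Borel. There are two ways to see this, and I would use whichever is cleanest in the covering setting. One is the direct argument (following Casselman in the linear case): $J_N(\pi)$ is computed by $v\mapsto \int_{N_1}\pi(n)v\,dn$ for $N_1$ a sufficiently large compact open subgroup of $N$; using that $v_0$ is fixed by $N\cap K$ and an averaging/positivity argument over the Iwasawa decomposition $\widetilde B\,K$, one shows this integral cannot kill $v_0$. The second, and the one I would actually write, is to invoke the Satake isomorphism for metaplectic covers (McNamara, Gao): the action of the spherical Hecke algebra on $v_0$ produces an algebra homomorphism to $\C$, and via the Satake theory $\pi$ is realized as a subquotient of an unramified genuine principal series $\Ind_{\widetilde B}^{Sp_{2l}^{(2r)}(F_\nu)}\sigma$ for a suitable irreducible representation $\sigma$ of $\widetilde T$; the covering-group form of the Bernstein--Zelevinsky geometric lemma then exhibits $\sigma$ as a subquotient of $J_N\bigl(\Ind_{\widetilde B}^{Sp_{2l}^{(2r)}(F_\nu)}\sigma\bigr)$, and exactness of the Jacquet functor forces $J_N(\pi)\neq0$.

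With $J_N(\pi)\neq0$ and $N$ the unipotent radical of the proper parabolic $B$ (here $l\ge1$ is used), $\pi$ is by definition not supercuspidal, completing the proof. The main obstacle is entirely one of bookkeeping rather than of ideas: one must confirm that the unramified-representation machinery---the splitting of $K$ and of $N$, the lifted Iwasawa decomposition, the Satake isomorphism, exactness of the Jacquet functor, and the geometric lemma---is available verbatim for the $2r$-fold cover. All of this is either recorded in Section~\ref{basic} (the canonical, compatible splittings of $N$ and of $K$) or is part of the now-standard theory of covering groups over local fields, so no genuinely new input is required; this is why the statement is only ``a special case'' of the more general result of Leslie.
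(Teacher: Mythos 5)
The paper itself contains no proof of this lemma: it is simply quoted as a special case of Leslie \cite{Leslie}, Proposition 6.5, so there is no internal argument to compare yours with. Your sketch reconstructs what is essentially the standard proof lying behind that citation: at a place where the cover splits over $Sp_{2l}(\mathcal{O}_\nu)$, the genuine spherical Hecke algebra is commutative by the metaplectic Satake isomorphism (McNamara \cite{McN}), so an irreducible unramified genuine representation is a constituent of an unramified genuine principal series and therefore has nonvanishing Jacquet module along the Borel, hence is not supercuspidal; exactness of the Jacquet functor then transfers this to the general (possibly reducible) case. This is correct in outline, and the covering-group input you list is indeed available in the literature, with the caveat that the Satake theory and the splitting over the hyperspecial maximal compact require the residue characteristic to be prime to $2r$ --- consistent with how the lemma is applied in the proof of Proposition~\ref{theta01}, where $\nu$ is a good place with $\text{ord}_\nu(2r)=0$. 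Two steps should be tightened. First, the reduction to an irreducible unramified subquotient is cleanest by taking the subrepresentation generated by the spherical vector and an irreducible quotient of it (the image of the spherical vector generates that quotient, so it is a nonzero $K$-fixed vector there); the purely Hecke-module phrasing still needs the dictionary between simple modules over the genuine spherical Hecke algebra and irreducible unramified genuine representations. Second, the inference ``exactness of the Jacquet functor forces $J_N(\pi)\neq0$'' is not valid as written: exactness only makes $J_N(\pi)$ a subquotient of $J_N(\Ind_{\widetilde B}\sigma)$, and a subquotient of a nonzero module can vanish. What you need is the standard consequence of the geometric lemma of \cite{B-Z} (whose formalism applies to these covers, which are l-groups in the sense of that paper) that an irreducible subquotient of $\Ind_{\widetilde B}\sigma$ embeds into $\Ind_{\widetilde B}(w\sigma)$ for some Weyl element $w$; Frobenius reciprocity then gives $J_N\neq0$. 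Equivalently, invoke directly that a supercuspidal irreducible cannot occur as a subquotient of a proper parabolic induction. With these repairs your argument is a complete and self-contained substitute for the citation.
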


We now give our result on the vanishing of Fourier coefficients for $\Theta_{2l}^{(r)}$.

\begin{proposition}\label{theta01}
Suppose that ${\mathcal O}$ is a unipotent orbit which is greater than ${\mathcal O}_c(\Theta_{2l}^{(r)})$ or that is not related to
${\mathcal O}_c(\Theta_{2l}^{(r)})$. Then the representation $\Theta_{2l}^{(r)}$ has no non-zero Fourier coefficient corresponding to
${\mathcal O}$. 
\end{proposition}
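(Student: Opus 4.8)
The plan is to prove the asserted vanishing by parabolic descent: one rewrites the Fourier coefficient of $\Theta_{2l}^{(r)}$ attached to a ``large'' orbit, via root exchanges, as an integral factoring through a constant term, and then applies Proposition~\ref{constant-term-theta} together with one of the two genericity/supercuspidality lemmas. First I would reduce to a finite list of orbits. Writing $2l=\alpha r+\beta$ with $0\le\beta<r$ as in Conjecture~\ref{conj1}: if $2l\le r$ then $\alpha=0$, $\mathcal{O}_c(\Theta_{2l}^{(r)})$ is the regular orbit $(2l)$, and there is nothing to prove, so assume $2l>r$; then the regular orbit is one of the orbits to be treated. In general it suffices to treat those $\mathcal{O}$ that are minimal among the orbits not $\le\mathcal{O}_c(\Theta_{2l}^{(r)})$, since by the standard ``going down'' property of Fourier coefficients along the closure ordering (\cite{G}) a nonzero coefficient for any non-minimal such $\mathcal{O}$ forces a nonzero coefficient for a minimal one beneath it. Enumerating these minimal orbits is a combinatorial exercise with symplectic partitions, carried out separately for $\alpha$ even and $\alpha$ odd; in each case one obtains a short list whose prototypes are the regular orbit $(2l)$, orbits obtained from $\mathcal{O}_c(\Theta_{2l}^{(r)})$ by moving a box to enlarge a top part (for instance $((r+1)r^{\alpha-1}(\beta-1))$ when $\alpha$ is odd), and a few boundary cases arising from $\beta=0$, $\beta=r-1$, and small $\alpha$.

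For each such orbit $\mathcal{O}$ I would write out the Fourier coefficient of $\Theta_{2l}^{(r)}$ along the unipotent group and character attached to $\mathcal{O}$ and apply the root-exchange technique (as in \cite{G-R-S1}): exchanging suitable pairs of abelian unipotent subgroups rewrites this coefficient as an integral that factors through the constant term of $\Theta_{2l}^{(r)}$ along a standard parabolic with Levi $GL_{a_1}\times\cdots\times GL_{a_t}\times Sp_{2l'}$, composed with a smaller coefficient on that Levi --- a Whittaker-type coefficient on the $GL_{a_i}$ factors together with either a generic (Whittaker) coefficient or a Fourier--Jacobi coefficient on $Sp_{2l'}$. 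By Proposition~\ref{constant-term-theta} (iterated) and its global analogue, this constant term is $\Theta^{(r)}_{GL_{a_1},\nu}\otimes\cdots\otimes\Theta^{(r)}_{GL_{a_t},\nu}\otimes\Theta^{(r)}_{2l',\nu}$ at every finite place, so the whole coefficient is governed by the corresponding coefficients of the general linear theta representations and of $\Theta^{(r)}_{2l',\nu}$.

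I would then close each case with one of the local inputs. When the reduction requires a nonzero Whittaker functional on $\Theta^{(r)}_{2l',\nu}$ with $2l'>r$, it vanishes by Lemma~\ref{theta-generic}; when it requires a nonzero Whittaker-type functional on $\Theta^{(r)}_{GL_{m},\nu}$ with $m$ too large relative to $r$, it vanishes by the analogous statement for the general linear group (Kazhdan--Patterson \cite{K-P}, Theorem~I.3.5). In the cases where the descent passes through a Fourier--Jacobi coefficient, the Weil representation that enters multiplies the cover degree by two, so nonvanishing would produce a genuine automorphic representation of a $2r$-fold cover $Sp_{2l''}^{(2r)}(\A)$ that is unramified at almost all places; maximality of $\mathcal{O}$ would force this representation to be cuspidal, contradicting Lemma~\ref{supercuspidal-lemma}.

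I expect the main obstacle to be the combinatorics rather than the analysis: correctly enumerating the minimal orbits above or incomparable to $\mathcal{O}_c(\Theta_{2l}^{(r)})$ for both parities of $\alpha$, and, for each, selecting the sequence of root exchanges and parabolic descents that lands precisely on one of the three situations above. The individual root-exchange manipulations are routine, but organizing them uniformly across all cases --- and in particular treating the boundary values $\beta=0$, $\beta=r-1$ and the small values of $\alpha$ without an ad hoc argument for each --- is where the real work lies.
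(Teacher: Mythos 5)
Your proposal assembles the correct external inputs (Proposition~\ref{constant-term-theta}, Gao's non-genericity lemma, Leslie's non-supercuspidality lemma, root exchanges), but it has two genuine gaps. The first is the reduction step: you propose to reduce to the orbits that are minimal among those not $\le{\mathcal O}_c(\Theta_{2l}^{(r)})$ by invoking a general ``going down'' property of Fourier coefficients along the closure ordering. No such general property is available, and \cite{G} does not supply one; what is available, and what the paper's proof uses, are the specific Lemmas 2.4 and 2.6 of \cite{G-R-S3}: vanishing of the coefficients attached to the anchor orbits $((2k)1^{2l-2k})$ for all $2k>r$ already implies vanishing for every orbit whose largest part exceeds $r$ (even top part $2k$, or odd top part $2k+1$ occurring with even multiplicity). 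Since every symplectic partition of $2l$ all of whose parts are at most $r$ is automatically $\le{\mathcal O}_c(\Theta_{2l}^{(r)})$, these anchors cover exactly the orbits in the statement, and the case-by-case enumeration of minimal ``bad'' orbits that you identify as the main obstacle is unnecessary --- but the substitute you propose for it is not a theorem.

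The second and more serious gap is that the unavoidable case, namely the anchor orbits $((2k)1^{2l-2k})$ with $2k>r$, is precisely where your sketch is thinnest: for these a parabolic descent to a constant term followed by non-genericity of a $GL$ or $Sp$ theta representation does not suffice. The paper's argument is local at a good unramified place: let $m$ be maximal with the twisted Jacquet module at $((2m)1^{2l-2m})$ nonzero; then $m<l$ by Lemma~\ref{theta-generic}; the nonvanishing passes to some Fourier--Jacobi module $FJ_{m,\alpha}(\pi_{2l})$, a genuine unramified representation of $Sp_{2l-2m}^{(2r)}(F_\nu)$; and one proves this module is \emph{supercuspidal}, contradicting Lemma~\ref{supercuspidal-lemma}. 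That supercuspidality is not, as you assert, forced by ``maximality of ${\mathcal O}$''; it is established by showing every untwisted Jacquet module of $FJ_{m,\alpha}(\pi_{2l})$ vanishes, using the Bernstein--Zelevinsky geometric lemma to split into $GL_a$-orbits, Proposition~\ref{constant-term-theta} together with an induction on $l$ for the trivial orbit, and an iteration on the nontrivial orbit that terminates only because of the maximality of the top part $2m$. Without this argument your appeal to Leslie's lemma has nothing to apply to, so the central case of the proposition remains unproven in your plan.
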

Note that Proposition~\ref{theta01} is equivalent to the first part of Theorem~\ref{theta05}.
\begin{proof}
Recall that we write $2l=\alpha r+\beta$ with $0\leq \beta<r$.
If $\alpha=0$ then ${\mathcal O}_c(\Theta_{2l}^{(r)})=(\beta)$, so $\mathcal{O}\leq {\mathcal O}_c(\Theta_{2l}^{(r)})$ for all orbits $\mathcal{O}$, and there is 
nothing to prove.
So suppose that $\alpha>0$. 
Let $\mathcal{O}$ be a unipotent orbit satisfying the conditions of Proposition~\ref{theta01}, let
$U_{\mathcal O}$ be the corresponding upper triangular unipotent subgroup of $Sp_{2l}$, and let
$\psi_{\mathcal O}$ denote any character of $U_{\mathcal O}$ such that the Fourier coefficient
\begin{equation}\label{theta1}
\int\limits_{[U_{\mathcal{O}}]}
\phi(ug)\,\psi_{\mathcal O}(u)\,du
\end{equation}
corresponds to the unipotent orbit ${\mathcal O}$ (see for example \cite{B-F-G1}, Section 4). (This is a slight abuse of notation as $\psi_{\mathcal{O}}$ is not uniquely
determined by $\mathcal{O}$.) We need to prove that  integral \eqref{theta1} is zero for all vectors $\phi$
in the space of $\Theta_{2l}^{(r)}$ and all $g\in Sp_{2l}({\A})$. 

First we show that it is enough to prove the vanishing of the coefficients for ${\mathcal O}_k:=((2k)1^{2l-2k})$ with $2k>r$. Indeed, by \cite{G-R-S3}, 
Lemma 2.6, this implies the
vanishing for all orbits of the form ${\mathcal O}=((2k)p_1^{e_1}\dots p_d^{e_d})$ with all $p_i\leq 2k$, $e_i\ge0$.  This in term implies the vanishing for orbits of the form
$((2k+1)^2p_1^{e_1}\dots p_d^{e_d})$ with all $p_i\leq 2k+1$, $e_i\ge0$ by \cite{G-R-S3}, Lemma 2.4.  These two Lemmas are formulated for the symplectic
groups but their proofs go over to their metaplectic covers without change.  Also, the arguments can be made local without difficulty, i.e.\ the
corresponding $\psi_{\mathcal{O}}$-twisted Jacquet module at a good unramified place is zero. 

We are reduced to showing the vanishing for ${\mathcal O}_k=((2k)1^{2l-2k})$ with $2k>r$. It is enough to prove the corresponding local statement.
That is, we will show that at a good unramified finite place $\nu$ (i.e.\ $\text{ord}_\nu(2r)=0$), the $\psi_{\mathcal{O}_k}$-twisted Jacquet module
$J_{V_2(\mathcal{O}_k),\psi_{\mathcal{O}_k}}(\pi_{2l})$ is zero, where $\pi_{2l}$ is the local constituent of $\Theta_{2l}^{(r)}$ at $\nu$.
(We use $\pi_{2l}$ in place of $\Theta^{(r)}_{{2l},\nu}$ to simplify the notation.)
In particular $\pi_{2l}$ is a genuine representation 
of the covering group $Sp_{2l}^{(r)}(F_\nu)$ over the nonarchimedean local field $F_\nu$.
To establish the vanishing, we follow Leslie \cite{Leslie}, Section 8, who investigated a similar problem
for the 4-fold cover.  
 Let $m$ be the integer such that the $\psi_{\mathcal{O}}$-twisted Jacquet module of $\pi_{2l}$ for 
 $\mathcal{O}=((2p)1^{2l-2p})$ is zero for $p>m$ but such that the module for $((2m)1^{2l-2m})$ is nonzero.
Observe that $m<l$. Indeed, if $m=l$ this would imply that $\pi_{2l}$ is generic. Since $2l>r$, this contradicts Lemma~\ref{theta-generic}.

We also suppose by induction on $l$ that the twisted Jacquet modules for $\pi_{2j}$, $1\leq j<l$, vanish on the orbits $((2q)1^{2j-2q})$ when $2q>r$.  
Note that the
base of the induction, that is the case $l=2$, is clear. We remark that even the case $l=2$ requires $2k>r$.  Indeed, 
if $r=3$ then the theta representation for $Sp_4^{(3)}$ has a nonzero Fourier coefficient corresponding
to $(21^2)$, but not corresponding to $(4)$ (see \cite{F-G3}, Lemma 2 and Proposition 3). 

The idea of the proof is to establish that the twisted Jacquet modules $J_{V_2(\mathcal{O}_k),\psi_{\mathcal{O}_k}}(\pi_{2l})$ vanish by studying their descent properties.  
To prove that these modules vanish, 
 it is sufficient to show that the Fourier Jacobi modules
$FJ_{m,\alpha}(\pi_{2l})$ defined in \cite{Leslie}, Section 6 (here $\alpha\in F_\nu^\times$), all vanish (\cite{Leslie}, Corollary 6.4; as noted there this is the local version of \cite{G-R-S3}, Lemma 1.1).
To do so, we will show that each Fourier Jacobi module $FJ_{m,\alpha}(\pi_{2l})$ is a supercuspidal representation of $Sp_{2l-2m}^{(2r)}(F_\nu)$.   Since $l>m$, if it were nonzero this
would contradict Lemma~\ref{supercuspidal-lemma}.
Note that in taking the Fourier Jacobi module the cover is doubled since $r$ is odd and the Fourier Jacobi module uses a twist by the Weil representation which lives on the double cover, 
so $FJ_{m,\alpha}(\pi_{2l})$ is a genuine unramified representation of the $2r$-fold cover $Sp_{2l-2m}^{(2r)}(F_\nu)$.
Aside from this the proof that the vanishing of all $FJ_{m,\alpha}(\pi_{2l})$ implies the vanishing of $J_{V_2(\mathcal{O}_k),\psi_{\mathcal{O}_k}}(\pi_{2l})$  is the same as that in \cite{Leslie}.  
(Fourier Jacobi modules 
and descents are also discussed in \cite{G-R-S4}, Section~3.8 and following.)

To prove that the representation $FJ_{m,\alpha}(\pi_{2l})$ is supercuspidal we must check that its untwisted Jacquet functors are all zero.
For $1\leq a\leq l-m$, let $J_a$ be the (untwisted) Jacquet functor on admissible 
representations of $Sp_{2l-2m}^{(2r)}(F_\nu)$ corresponding to
the unipotent radical of the parabolic subgroup of $Sp_{2l-2m}$ with Levi component $GL_a\times Sp_{2l-2m-2a}$.
Then to prove supercuspidality, we must establish the vanishing of  $J_a(FJ_{m,\alpha}(\pi_{2l}))$, $1\leq a\leq l-m$.
After conjugating by a suitable Weyl element, this Jacquet module is isomorphic to the $\psi_a$-twisted Jacquet module of $\pi_{2l}$ with respect to the subgroup
$$U_a(F_\nu):=\left\{\begin{pmatrix}I_a&X&Y\\&v&X^*\\&&I_a\end{pmatrix}\right\}$$
where $X\in {\rm Mat}_{a\times (2l-2a)}(F_\nu)$ has zero entries in the first column and $v$ is upper triangular unipotent with center block $I_{2(l-a)}$,
and where the character $\psi_a$ of $U_a(F_\nu)$ is given by
$$\psi_a(u)=\psi(u_{a+1,a+2}+\dots+u_{a+m,a+m+1}+\alpha u_{a+m,2l-a-m+1}).$$
Let $H\subset Sp_{2l}$ be the subgroup of unipotent matrices which are zero above the main diagonal except on column $a+1$ and row
$2l-a$. Using  the Geometrical Lemma of \cite{B-Z}, p.\ 448, if the module $J_{U_a,\psi_a}(\pi_{2l})$ is nonzero, then $J_{H,\psi_H}(J_{U_a,\psi_a}(\pi_{2l}))$ is nonzero for some
character $\psi_H$ of $H$.
There are two orbits under the action of $GL_a$.  Using Proposition~\ref{constant-term-theta}, the module with $\psi_H=1$ factors through a coefficient of $\pi_{2l-2a}$ with respect to the orbit
$((2m)1^{2l-2m-2a})$, and this vanishes by induction (note that since $2l>r$, also $2m>r$).  The non-trivial orbit contributes a similar Jacquet module with $a$ replaced by $a-1$.
Under the action of
$GL_{a-1}$ there are again two orbits. As above the contribution from the trivial orbit is
zero by the induction hypothesis. Thus, we are again left with the non-trivial orbit. 
Repeating, we arrive at the nonvanishing of a twisted Jacquet module of $\pi_{2l}$ with respect to the orbit $((2m+2a)1^{2l-2m-2a})$.  However, $m$ was chosen to be maximal
such that the module for $((2m)1^{2l-2m})$ is nonzero, and $a\geq1$.  
We conclude that $J_{U_a,\psi_a}(\pi_{2l})=0$, and hence that all the Jacquet modules $J_a(FJ_{m,\alpha}(\pi_{2l}))$ indeed vanish.
Thus the representation $FJ_{m,\alpha}(\pi_{2l})$ is supercuspidal, as claimed.
This completes the proof of the vanishing. 
\end{proof}

The local information developed above will also be used in treating the unramified correspondence in Section~\ref{the-unramified-corr}.
We record the local vanishing in Proposition~\ref{theta-local} below (there we use $\Theta_{2l}^{(r)}$ for the representation denoted $\Theta^{(r)}_{{2l},\nu}$ here).

Our next proposition requires a result of Jiang and Liu (\cite{J-L}, Proposition 3.3) about the 
Fourier coefficients of automorphic representations on symplectic groups and composite partitions which we recall here for 
completeness.  
The notation is as in \cite{J-L}.  In fact the result of Jiang and Liu is bit sharper since they give information about the characters supporting the Fourier coefficients but we will not need this here.
Their result is not stated for covering groups but the proof is identically the same.

\begin{proposition}[Jiang and Liu]\label{j-l} Let $\pi$ be an irreducible automorphic representation of $Sp_{2n}(\A)$ or a metaplectic cover that is realized in the space of automorphic forms,
and $\underline{p}=[(2k+1)^2p_1^{e_1}p_2^{e_2}\cdots p_r^{e_r}]$ be a symplectic partition of $2n$ with $2k+1\geq p_1\geq p_2\geq \cdots \geq p_r$; $e_i=1$ if $p_i$ is even; and 
$e_i=2$ if $p_i$ is odd.  Then $\pi$ has a non-vanishing Fourier coefficient attached to $\underline{p}$ if and only if it has a non-vanishing Fourier coefficient attached
to the composite partition $[(2k+1)^21^{2n-4k-2}]\circ [p_1^{e_1}p_2^{e_2}\cdots p_r^{e_r}]$.
\end{proposition}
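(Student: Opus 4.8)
The plan is to prove Proposition~\ref{j-l} along the lines of \cite{J-L}, by the Ginzburg--Rallis--Soudry method of comparing Fourier coefficients attached to unipotent orbits; I will indicate the shape of the argument and why it is unaffected by passing from $Sp_{2n}$ to its covers. First I would fix an $\mathfrak{sl}_2$-triple adapted to the partition $\underline{p}$, use its semisimple element to grade the Lie algebra of $Sp_{2n}$, and thereby write the Fourier coefficient attached to $\underline{p}$ as an explicit integral $\int_{[U_{\underline p}]}\varphi(ug)\,\psi_{\underline p}(u)\,du$, where $U_{\underline p}$ is assembled from the strictly positive graded pieces (with the degree-one piece carrying its usual Heisenberg-type structure) and $\psi_{\underline p}$ is the character of $[U_{\underline p}]$ determined by the triple. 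I would likewise unwind the composite-partition side: by definition this is the coefficient attached to $[(2k+1)^21^{2n-4k-2}]$ on $Sp_{2n}$, followed, on the copy of $Sp_{2n-4k-2}$ inside the reductive part of the stabilizer, by the coefficient attached to $[p_1^{e_1}\cdots p_r^{e_r}]$. Since $2k+1$ is the largest part of $\underline{p}$, conjugating both sides by a single fixed Weyl element puts them in the form of integrals over unipotent subgroups of $Sp_{2n}$ that share a large common subgroup $N$ and differ only in the ``mixed'' root subgroups joining the two size-$(2k+1)$ Jordan blocks to the blocks coming from the $p_i$'s.

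The second step is to remove these mixed root subgroups one layer at a time using the exchanging-roots lemma of Ginzburg--Rallis--Soudry (see \cite{G-R-S4}, and \cite{G-R-S3}; this is the tool used in \cite{J-L}, Section~3). In each application one has $N$ normalized by two abelian unipotent subgroups $X$ and $Y$ with $[X,Y]\subseteq N$, the character extending trivially to both $NX$ and $NY$, and the pairing $(x,y)\mapsto\psi([x,y])$ nondegenerate; the lemma then gives that $\pi$ has a nonzero $(NX,\psi)$-Fourier coefficient for some $\varphi\in\pi$ if and only if it has a nonzero $(NY,\psi)$-Fourier coefficient. Iterating through the finitely many layers by which the $\underline{p}$-datum and the composite datum differ carries one Fourier coefficient into the other, nonvanishing being preserved in both directions; since every integration is over a compact quotient there are no convergence subtleties. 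The step I expect to be the main obstacle is exactly this bookkeeping: setting up the grading, pinning down which root subgroups constitute $N$, $X$ and $Y$ at each layer, checking the nondegeneracy hypothesis throughout, and verifying that after all the exchanges one arrives precisely at the unipotent subgroup and character defining the composite-partition coefficient. The combinatorics of composite partitions and the needed structure theory are developed in \cite{J-L}, Section~3, building on \cite{G-R-S4}, so this is a matter of following that bookkeeping rather than finding a new idea.

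Finally, I would note that the covering group plays no role. Every unipotent subgroup appearing in the argument is canonically split in $Sp_{2n}^{(r)}(\A)$ and its local factors, as recalled in Section~\ref{basic}, the notion of a Fourier coefficient attached to a unipotent orbit is defined on the cover word for word as on the linear group, and the exchanging-roots lemma is a purely formal identity among integrals of automorphic forms against characters of split unipotent subgroups. Hence the proof of \cite{J-L}, Proposition~3.3 for $Sp_{2n}(\A)$ applies verbatim to its metaplectic covers, which is the form in which we use it here.
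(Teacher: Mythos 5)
Your proposal takes essentially the same route as the paper: the paper does not reprove this statement but simply quotes \cite{J-L}, Proposition 3.3, remarking that since all unipotent subgroups are canonically split in the covering groups, the Jiang--Liu argument (Fourier expansions together with the Ginzburg--Rallis--Soudry root-exchange technique of \cite{G-R-S4}, exactly the machinery you outline) applies verbatim to metaplectic covers. Your sketch of that machinery and your closing observation about the irrelevance of the cover are precisely the justification the paper relies on, so the proposal is correct.
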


Next we study the nonvanishing of the Fourier coefficients of theta functions. We start with a proposition.
\begin{proposition}\label{theta02}
Suppose Conjecture \ref{conj1} holds for a given $r$ and $l$.  Then it holds for $r+l$ and $l$ as well. 
\end{proposition}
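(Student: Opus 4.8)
The plan is to split the assertion into an ``upper bound'' half, immediate from Theorem~\ref{theta05}(1), and a ``nonvanishing'' half, which is the real content and which I would deduce from the hypothesis by an explicit unfolding that trades the factor $r$ for $r+l$. (Here $l$ is necessarily even, so that $r+l$ is odd and $\Theta_{2l}^{(r+l)}$ and Conjecture~\ref{conj1} are defined.)

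First the reduction. Write $2l=\alpha r+\beta$ with $0\le\beta<r$ and $2l=\alpha'(r+l)+\beta'$ with $0\le\beta'<r+l$; since $2l<2(r+l)$ one has $\alpha'\in\{0,1\}$, with $\alpha'=0$ and ${\mathcal O}_c(\Theta_{2l}^{(r+l)})=(2l)$ when $l<r$, and $\alpha'=1$ and ${\mathcal O}_c(\Theta_{2l}^{(r+l)})=((r+l-1)(l-r+1))$ when $l\ge r$ (in both cases this is a valid symplectic partition of $2l$). Apply Theorem~\ref{theta05}(1) with $r+l$ in place of $r$ (the proof of Proposition~\ref{theta01} is uniform in the odd degree): every orbit in ${\mathcal O}(\Theta_{2l}^{(r+l)})$ is $\le{\mathcal O}_c(\Theta_{2l}^{(r+l)})$. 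Hence it is enough to show that $\Theta_{2l}^{(r+l)}$ has a nonzero Fourier coefficient attached to the orbit ${\mathcal O}_c(\Theta_{2l}^{(r+l)})$ itself; combined with the preceding sentence this makes it the unique maximal supported orbit, so ${\mathcal O}(\Theta_{2l}^{(r+l)})=\{{\mathcal O}_c(\Theta_{2l}^{(r+l)})\}$, which is Conjecture~\ref{conj1} for $r+l$ and $l$.

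For the nonvanishing I would unfold the ${\mathcal O}_c(\Theta_{2l}^{(r+l)})$-Fourier coefficient of $\Theta_{2l}^{(r+l)}$ along a chain of unipotent subgroups of $Sp_{2l}$. Using Proposition~\ref{j-l} of Jiang and Liu, to trade a partition with a long leading block for a composite partition, together with the root-exchange lemmas of \cite{G-R-S3} already used in the proof of Proposition~\ref{theta01}, one rewrites this coefficient as an iterated integral whose interior layers are constant terms of $\Theta_{2l}^{(r+l)}$ along maximal parabolics. By Proposition~\ref{constant-term-theta} each such constant term is a tensor product of theta representations of $GL$-factors with a theta representation of a smaller symplectic factor, all still of degree $r+l$. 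Peeling these layers off and identifying what remains, one is left with (a twist of) the ${\mathcal O}_c(\Theta_{2l}^{(r)})$-Fourier coefficient of $\Theta_{2l}^{(r)}$, which is nonzero by hypothesis; the shift of the cover degree from $r$ to $r+l$ is supplied precisely by the $GL$-data that is integrated away in the course of the unfolding.

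The main obstacle will be this last step: one has to match, on the nose, the unipotent subgroups, their characters, and the cocycle restrictions of Section~\ref{basic}, so that the covers agree along the whole chain and so that the composite partition one arrives at is genuinely ${\mathcal O}_c(\Theta_{2l}^{(r)})$ rather than some strictly smaller orbit. For the latter one invokes the character-level refinement of Proposition~\ref{j-l} recorded by Jiang and Liu, and for the control of the intermediate layers one invokes Theorem~\ref{theta05}(1) applied to the intermediate symplectic and general linear factors, which forbids the accidental appearance of a larger orbit. Finally, in the case $l<r$ the orbit ${\mathcal O}_c(\Theta_{2l}^{(r+l)})=(2l)$ is regular, so the nonvanishing to be proved is just the genericity of $\Theta_{2l}^{(r+l)}$ and the unfolding degenerates; this case should be handled first and separately.
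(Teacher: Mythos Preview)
You have misread the statement. Despite the awkward wording, the proposition does \emph{not} change the degree of the cover: it asserts that if Conjecture~\ref{conj1} holds for $\Theta_{2l}^{(r)}$, then it holds for $\Theta_{2(l+r)}^{(r)}$, i.e.\ the cover degree $r$ is fixed and the symplectic rank is increased by $r$. This is exactly how the paper uses the result immediately afterward (reducing Conjecture~\ref{conj1} for a given $r$ to the range $1\le l\le r-1$) and in the proof of Theorem~\ref{theta05}(2). Your entire proposal is aimed instead at $\Theta_{2l}^{(r+l)}$, which is a different object, and already the side condition you impose (``$l$ is necessarily even'') should have signaled trouble, since the proposition is meant to apply for every $l$.

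Even on its own terms, your central mechanism is not available: Proposition~\ref{constant-term-theta} (and its global analogue) computes constant terms of the $r$-fold theta \emph{within} the same cover, returning $\Theta^{(r)}_{GL_m}\otimes\Theta^{(r)}_{2l-2m}$; no unfolding, root exchange, or Jacquet restriction alters the degree $r$, so the sentence ``the shift of the cover degree from $r$ to $r+l$ is supplied precisely by the $GL$-data that is integrated away'' has no content. The paper's actual argument runs as follows. By Proposition~\ref{theta01} only the nonvanishing at ${\mathcal O}_c(\Theta_{2(l+r)}^{(r)})$ is in question. Take the constant term of $\Theta_{2(l+r)}^{(r)}$ along the unipotent radical of the maximal parabolic with Levi $GL_r\times Sp_{2l}$; by the global analogue of Proposition~\ref{constant-term-theta} one lands on $\Theta^{(r)}_{GL_r}\otimes\Theta^{(r)}_{2l}$. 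Since $\Theta^{(r)}_{GL_r}$ is generic (Kazhdan--Patterson) and $\Theta^{(r)}_{2l}$ supports a nonzero coefficient at ${\mathcal O}_c(\Theta_{2l}^{(r)})$ by hypothesis, the composite coefficient attached to $(r^21^{2l})\circ{\mathcal O}_c(\Theta_{2l}^{(r)})$ is nonzero. A Weyl conjugation and root exchange, together with Proposition~\ref{j-l}, then identify this with the Fourier coefficient for ${\mathcal O}_c(\Theta_{2(l+r)}^{(r)})$.
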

\begin{proof}
We know from Proposition \ref{theta01} that if $\mathcal{O}\in{\mathcal O}(\Theta_{2(l+r)}^{(r)})$
then $\mathcal{O}\le {\mathcal O}_c(\Theta_{2(l+r)}^{(r)})$. Thus, we only need to prove that the representation $\Theta_{2(l+r)}^{(r)}$ has a 
non-zero Fourier coefficient corresponding to the unipotent orbit 
${\mathcal O}_c(\Theta_{2(l+r)}^{(r)})$.

Write $2l=\alpha r+\beta$ with $0\le \beta\ <r$. Then $2(l+r)=(\alpha+2)r+\beta$. By assumption, there is a unipotent subgroup 
$U({\mathcal O}_c(\Theta_{2l}^{(r)}))$ of $Sp_{2l}$, and a character $\psi_U$ 
of the quotient $[U({\mathcal O}_c(\Theta_{2l}^{(r)}))]$ such that the Fourier coefficient 
\begin{equation}\label{theta100}
\int\limits_{[U({\mathcal O}_c(\Theta_{2l}^{(r)}))]}
\varphi_{2l}^{(r)}(u)\,\psi_{U}(u)\,du
\end{equation}
corresponds to the unipotent orbit ${\mathcal O}_c(\Theta_{2l}^{(r)})$ and is not zero for some $\varphi_{2l}^{(r)}$ in the space of $\Theta_{2l}^{(r)}$. 
It follows from \cite{F-G2}, Proposition 2.1, or as in Offen and Sayag \cite{O-S}, p.\ 10, that the constant terms of theta functions with respect to the unipotent radical of a Levi 
in fact realize products of theta functions on the Levi. (In \cite{O-S}, this is formulated in terms of the surjectivity of an intertwining operator.) This is the global analogue
of Proposition~\ref{constant-term-theta} above.  We conclude that 
there is a function
$\varphi_{2(l+r)}^{(r)}$ in the space of
$\Theta_{2(l+r)}^{(r)}$ such that the integral
\begin{equation}\label{theta2}
\int\limits_{[U({\mathcal O}_c(\Theta_{2l}^{(r)}))]}
\int\limits_{[V_2]}
\int\limits_{[V_1]}
\varphi_{2(l+r)}^{(r)}\left (v_1\begin{pmatrix} v_2&&\\ &u& \\
&&v_2^*\end{pmatrix} \right )\,\psi_{V_2}(v_2)\,\psi_{U}(u)\,dv_1\,dv_2\,du
\end{equation}
is not zero. 
Here $V_1$ is the unipotent radical of the maximal parabolic subgroup of $Sp_{2(l+r)}$ whose Levi part is $GL_r\times Sp_{2l}$, 
$V_2$ is the maximal unipotent subgroup of $GL_r$ consisting of upper unipotent matrices, and
the character $\psi_{V_2}$ is the Whittaker character defined on the group $V_2$.  

To relate the Fourier coefficient in equation \eqref{theta2}, 
to the Fourier coefficient corresponding to the unipotent orbit 
${\mathcal O}_c(\Theta_{2(l+r)}^{(r)})$,
we first use the result of Jiang and Liu recalled in Proposition~\ref{j-l}.  We deduce from this that the Fourier coefficient corresponding to the 
unipotent orbit ${\mathcal O}_c(\Theta_{2(l+r)}^{(r)})$ is not zero for some choice of data, if and only if the Fourier coefficient corresponding to the 
orbit $(r^21^{2l})\circ {\mathcal O}_c(\Theta_{2l}^{(r)})$ is not zero for some choice of data.
The Fourier coefficient corresponding to the  last orbit can be written as follows 
\begin{equation}\label{theta20}
\int\limits_{[U_{2,r_1,l+1}]}
\int\limits_{[U({\mathcal O}_c(\Theta_{2l}^{(r)}))]}
\varphi_{2(l+r)}^{(r)}\left (v\begin{pmatrix} I_{2r}&&\\
&u&\\ &&I_{2r}\end{pmatrix}\right )\psi_{U_{2,r_1,l+1}}'(v)\psi_U(u)\,du\,dv.
\end{equation}
We recall that the group $U_{2,r_1,l+1}$ was defined right before equation \eqref{mat4}.  Also, the character $\psi_U$ was defined in equation \eqref{theta100}. The character 
$\psi_{U_{2,r_1,l+1}}'$ is defined as follows. For $v\in U_{2,r_1,l+1}$ consider its factorization given by equation \eqref{mat2}. Then we define $\psi_{U_{2,r_1,l+1}}'(v)=
\psi_{U_{2,r_1,l+1}}(v)\psi_V'(v)$ where $\psi_{U_{2,r_1,l+1}}$ is defined in equation \eqref{character} and $\psi_V'(v)$ is 
defined as follows. Let $Y=(y_{i,j})\in \Mat_{2\times 2(l+1)}$. Define $\psi'(Y)=\psi(y_{1,1}+y_{2,2(l+1)})$. Then, for $v$ as in equation \eqref{mat2} we define $\psi_V'(v)=\psi'(Y)$. 

Using Proposition  \ref{theta01} we claim that integral \eqref{theta2} is not zero for some choice of data if and only if integral \eqref{theta20} is not zero for some choice of data. 
Since integral \eqref{theta2} is not zero for some choice of data, this claim will imply the Proposition.

The proof of the claim is standard, and follows in a similar way as the proof of Proposition 3.3 in \cite{J-L}. 
We give some details. Let $w$ denote the monomial matrix in the Weyl group of $Sp_{2(n+l)}$ defined as 
$$w=\begin{pmatrix}  \epsilon_1&&\epsilon_2\\ &I_{2l}&\\
\epsilon_3&&\epsilon_4\end{pmatrix}\ \ \ \ \ \ \ \ \ \ 
\epsilon_i\in \Mat_r$$
Here $\epsilon_1(i,2i-1)=1$ for all $1\le i\le (r+1)/2$, and 
$\epsilon_2(i+(r+3)/2,2i+2)=1$ for all $0\le i\le (r-3)/2$. This determines $w$ uniquely. We have $\varphi_{2(l+r)}^{(r)}(h)=
\varphi_{2(l+r)}^{(r)}(wh)$. Hence, after conjugating $w$ to the right, integral \eqref{theta2} 
is not zero for some choice of data if and only if the integral
\begin{equation}\label{theta21}\notag
\int\limits_{[U({\mathcal O}_c(\Theta_{2l}^{(r)}))]}
\int\limits_{[V_2]}
\varphi_{2(l+r)}^{(r)}\left (\begin{pmatrix}  I_r&A&B\\ &I_{2l}&A^*\\ &&I_r \end{pmatrix}\begin{pmatrix} v_2&&\\
&u&\\ &&v_2^*\end{pmatrix}\begin{pmatrix} I_r&&\\ C&I_{2l}&
\\ D&C^*&I_r\end{pmatrix}
\right )\\  \psi_{V_2}(v_2)\psi_U(u)\,du\,dv
\end{equation}
is not zero for some choice of data. Here, the matrices $A$, $B$, $C$ and $D$ are suitable matrices which are obtained in a similar way 
as in the proof of Proposition 3.3 in \cite{J-L}. We omit the details. Performing a root exchange as in \cite{G-R-S4}, Section 7.1, and using Proposition \ref{theta01}, 
we deduce that the above integral is not zero for some choice of data if and only if integral \eqref{theta20} is not zero for some choice of data.
\end{proof}

It follows from Proposition~\ref{theta02} that given an odd number $r$, to prove Conjecture~\ref{conj1} for all symplectic groups $Sp_{2l}^{(r)}$ it is enough to prove the result 
for the symplectic groups $Sp_{2l}^{(r)}$ with $1\le l\le r-1$. 

Note that when $2\le 2l<r$, Conjecture~\ref{conj1}  states that ${\mathcal O}(\Theta_{2l}^{(r)})=\{(2l)\}$, that is, that the theta representation is generic. When $r<2l<2r-1$, the conjecture is that 
${\mathcal O}(\Theta_{2l}^{(r)})=\{((r-1)(2l-r+1))\}$. The next Proposition gives a lower bound for some element of the set ${\mathcal O}(\Theta_{2l}^{(r)})$ (or for the associated orbit if this
set is, as expected, a singleton).
\begin{proposition}\label{theta03}
Assume that $1\le l\le r$.
Then the representation $\Theta_{2l}^{(r)}$ has a non-zero Fourier coefficient corresponding to the unipotent orbit $(l^2)$.  
\end{proposition}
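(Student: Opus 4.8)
The plan is to exhibit the orbit $(l^2)$ by producing a nonzero Fourier coefficient along the unipotent subgroup attached to this partition, using the constant-term formula for theta functions (Proposition~\ref{constant-term-theta} and its global analogue, \cite{F-G2}, Proposition~2.1). First I would identify the parabolic subgroup $P$ of $Sp_{2l}$ whose Levi is $GL_l$ — that is, the Siegel parabolic — and recall that the global analogue of Proposition~\ref{constant-term-theta} tells us that the constant term of $\theta^{(r)}_{2l}$ along the unipotent radical of $P$ realizes the theta representation $\Theta^{(r)}_{GL_l}$ on the $GL_l$ Levi factor. The partition $(l^2)$ corresponds precisely to a Fourier coefficient that, after passing to the Siegel Levi, becomes a Fourier coefficient of $\Theta^{(r)}_{GL_l}$ on $GL_l$ attached to the partition $(l^2)$ of — but here one must be careful: for $GL_l$ the relevant datum is a Fourier coefficient with respect to the orbit for $GL_l$ that appears when one builds the $(l^2)$ coefficient of $Sp_{2l}$. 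The key point is that the constant term along the Siegel unipotent of $Sp_{2l}$ carries the Fourier coefficient structure down to a Whittaker-type or semi-Whittaker-type coefficient on $GL_l$, and the $(l^2)$ orbit of $Sp_{2l}$ corresponds to taking the full constant term followed by the trivial coefficient on the $GL_l$ factor (the abelian part being absorbed into the Siegel unipotent), so one only needs $\Theta^{(r)}_{GL_l}$ to be nonzero, which it is by construction.

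More concretely, the unipotent subgroup $U_{(l^2)}$ attached to the orbit $(l^2)$ of $Sp_{2l}$ sits inside the Siegel unipotent radical $V$ (the $\Mat_l^0$-block), together with a character; in the stable range the relevant Fourier coefficient factors through the constant term along $V$. By the global analogue of Proposition~\ref{constant-term-theta}, this constant term is (up to data) a function in $\Theta^{(r)}_{GL_l}(\A)$ on the Levi $GL_l(\A)$. Since $\Theta^{(r)}_{GL_l}$ is a nonzero automorphic representation, one can choose the residual data so that the resulting function on $GL_l(\A)$ is nonzero, and hence the corresponding Fourier coefficient on $Sp_{2l}$ is nonzero. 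The constraint $1\le l\le r$ guarantees that the orbit $(l^2)$ is below the conjectured maximal orbit ${\mathcal O}_c(\Theta^{(r)}_{2l})$ — indeed when $2l\le r$ we have ${\mathcal O}_c=(2l)\ge (l^2)$, and when $r<2l\le 2r$ a direct comparison of partitions shows $(l^2)\le {\mathcal O}_c$ — so this nonvanishing is consistent with Proposition~\ref{theta01} and there is no a priori obstruction.

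The main obstacle I expect is the bookkeeping to show that the $(l^2)$-Fourier coefficient of $Sp_{2l}$ really does reduce, after taking the constant term along the Siegel unipotent, to just a value of $\Theta^{(r)}_{GL_l}$ with the trivial character on $GL_l$, rather than to a genuinely nontrivial (e.g.\ Whittaker) coefficient on $GL_l$ which might vanish. This is where one uses the precise description of the unipotent $U_{(l^2)}$ and its character: the character of $U_{(l^2)}$ is supported entirely on root subgroups lying in the abelianization of the Siegel unipotent (the symmetric $l\times l$ block), with trivial restriction to the part that maps to the $GL_l$-Levi's own unipotent. I would make this explicit by writing down the unipotent radical and character associated to $(l^2)$ via the standard $\mathfrak{sl}_2$-triple recipe (as in \cite{G}, \cite{B-F-G1}), checking that conjugating by the Weyl element that sends the Siegel parabolic to standard position converts the $(l^2)$-coefficient into (constant term along $V$)$\,\circ\,$(evaluation of the $GL_l$ theta function), and then invoking the nonvanishing of $\Theta^{(r)}_{GL_l}$. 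A secondary technical point is keeping track of the metaplectic cocycle under this Weyl conjugation, but by the block-compatibility of the Banks--Levy--Sepanski cocycle and the splitting conventions fixed in Section~\ref{basic} this introduces no genuine difficulty.
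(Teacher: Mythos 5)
Your reduction misidentifies the Fourier coefficient attached to $(l^2)$, and this is a fatal gap rather than bookkeeping. The character attached to $(l^2)$ is \emph{not} supported on the abelian (symmetric-block) part of the Siegel radical with trivial restriction to the Levi directions; that description fits orbits of the form $(2^m1^{2l-2m})$, which coincide with $(l^2)$ only when $l=2$. For general $l$ one realizes the $(l^2)$-coefficient (as the paper does) on the unipotent radical $V_l$ of the parabolic with Levi $GL_2\times\cdots\times GL_2$, with character $\psi_{V_l}(v)=\psi(\sum_{i=1}^{l-1}v_{i,i+2})$; conjugating by the interleaving Weyl element (the one with $w_{i,2i-1}=1$, $1\le i\le l$) puts the coefficient into Siegel-type coordinates with \emph{trivial} character on the symmetric block and the \emph{Whittaker} character $\psi_0$ on the upper-triangular unipotent of the $GL_l$ Levi --- exactly the opposite of what you assert. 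Consequently, after applying \cite{F-G2}, Proposition 2.1 (the global analogue of Proposition~\ref{constant-term-theta}), what must be shown nonzero is the Whittaker coefficient of $\Theta^{(r)}_{GL_l}$, not the mere nonvanishing of $\Theta^{(r)}_{GL_l}$. The hypothesis $1\le l\le r$ is precisely the input needed here: by Kazhdan--Patterson \cite{K-P}, the theta representation on the $r$-fold cover of $GL_l$ is generic in this range. In your outline that hypothesis appears only as a consistency check against Proposition~\ref{theta01}, which is a red flag: as written, your argument would yield a nonzero $(l^2)$-coefficient for every $l$, contradicting Proposition~\ref{theta01} whenever $(l^2)$ is greater than or unrelated to ${\mathcal O}_c(\Theta_{2l}^{(r)})$ (for instance $2l=4r$, where $(l^2)=((2r)^2)>(r^4)$).

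Two further points about how the correct argument runs. First, even after the Weyl conjugation the symmetric-block variables are integrated only over partial domains (an upper piece $\Mat_{l,1}^0$ and a lower piece), and one needs root exchanges in the style of \cite{J-L} and \cite{G-R-S4}, Section 7.1, to convert this into the full constant term over $[\Mat_l^0]$ before the $GL_l^{(r)}$ identification can be invoked; the paper organizes this as a proof by contradiction, assuming the $(l^2)$-coefficient vanishes identically and deducing that the Whittaker coefficient of the theta representation of $GL_l^{(r)}(\A)$ vanishes, contradicting genericity. Second, your cocycle remark is fine, but it does not touch the real issue: without the genericity of $\Theta^{(r)}_{GL_l}$ for $l\le r$, which your proposal never uses, the nonvanishing cannot be concluded.
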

\begin{proof}
Let $V_l$ denote the unipotent radical of the parabolic subgroup of $Sp_{2l}$ whose Levi part is isomorphic to $GL_2^{l/2}$ if $l$ is even and to 
$GL_2^{(l+1)/2}$ if $l$ is odd. Let $\psi_{V_l}$ denote the character of $[V_l]$ given by 
$\psi_{V_l}(v)=\psi(\sum_{i=1}^{l-1}v_{i,i+2})$. 
To prove the proposition we will assume that the integral 
\begin{equation}\label{theta3}
\int\limits_{[V_l]}
\varphi^{(r)}(vg)\,\psi_{V_l}(v)\,dv
\end{equation}
is zero for all functions $\varphi^{(r)}$ in $\Theta_{2l}^{(r)}$ and $g\in Sp_{2l}^{(r)}(\A)$
 and derive a contradiction.  We may take $g=e$. Let $w\in Sp_{2l}(F)$ be the monomial matrix with non-zero entries
$\pm1$ and such that 
 $w_{i,2i-1}=1$ for all $1\le i\le l$. 
Since $w\in Sp_{2l}(F)$, we have 
$\varphi^{(r)}(v)=\varphi^{(r)}(wv)$. Conjugating $w$  across $v$, we deduce that the integral
\begin{equation}\label{theta4}
\int\varphi^{(r)}\left (\begin{pmatrix} I_l&A\\ &I_l\end{pmatrix}
\begin{pmatrix} B&\\ &B^*\end{pmatrix}\begin{pmatrix} I_l&\\ C&I_l\end{pmatrix}\right )\psi_0(B)\,dA\,dB\,dC
\end{equation}
is zero for all choices of data.
Here $A$ and $C$ are integrated over $[\Mat_{l,1}^0]$
where $\Mat_{l,1}^0$ is the subgroup of $\Mat_l^0$
consisting of all matrices $X=(X_{i,j})\in \Mat_l^0$ such that
$X_{i,j}=0$ for all $i\ge j$ (the group $\Mat_l^0$ was defined at the beginning of Section \ref{basic}). 
The variable $B$ is integrated over $[L_l]$ where $L_l$ is the unipotent subgroup of $GL_l$ consisting of all upper triangular matrices, and
$\psi_0$ is the Whittaker character defined on $L_l$.

Integral \eqref{theta4} is a special case of the situation dealt 
with in \cite{J-L} Propositions 3.2 and 3.3. Performing root exchange as in \cite{J-L} (see also \cite{G-R-S4}, Section 7.1), and 
using the fact that if an automorphic function is zero then all its Fourier coefficients are zero, we deduce that the integral  
\begin{equation}\notag
\int\varphi^{(r)}\left (\begin{pmatrix} I_l&A\\ &I_l\end{pmatrix}
\begin{pmatrix} B&\\ &B^*\end{pmatrix}\right )\psi_0(B)\,dA\,dB
\end{equation}
is zero for all choices of data. Here $A$ is integrated over the quotient $[\Mat_{l}^0]$, and $B$ is integrated as in \eqref{theta4}. Applying 
\cite{F-G2}, Proposition 2.1 (or, again, as in \cite{O-S} p.\ 10), we deduce that the Whittaker coefficient of the theta representation of the 
group $GL_l^{(r)}({\A})$ is zero for all choices of data. However, since $1\le l\le r$ it follows from  \cite{K-P} that this theta representation on  
$GL_l^{(r)}({\A})$ is generic. Thus we have derived a contradiction.
\end{proof}

Denote by $e_{i,j}$ the square matrix whose $(i,j)$ entry is one and with zeros elsewhere. Given $l\geq1$ and $1\leq i,j\leq 2l$, let $e_{i,j}'=e_{i,j}\pm e_{2l-j+1,2l-i+1}$, 
with the sign determined so that the matrix $I_{2l}+e_{i,j}'$ is in $Sp_{2l}$.   Also, let 
$$E_{i,2l+1-i}=\{I_{2l}+te_{i,2l+1-i}\},\qquad E_{i,j}'=\{I_{2l}+te'_{i,j}\}~~(i+j\neq 2l+1)$$ 
be the associated one parameter unipotent subgroups, corresponding to the long (resp.\ short) roots of $Sp_{2l}$.

\begin{proposition}\label{theta04}
Suppose that $1\le l\le r-1$. 
\begin{enumerate}
\item If $l$ is odd, then the set ${\mathcal O}(\Theta_{2l}^{(r)})$ contains an orbit which is greater than or equal to the
unipotent orbit $((l+1)(l-1))$.
\item Suppose $l$ is even, $l\ne r-1$ and that 
${\mathcal O}\ge (l^2)$ for all ${\mathcal O}\in {\mathcal O}(\Theta_{2l}^{(r)})$.  Then  ${\mathcal O}(\Theta_{2l}^{(r)})$ is a singleton and
${\mathcal O}(\Theta_{2l}^{(r)})\ge ((l+2)(l-2))$.
\end{enumerate}
\end{proposition}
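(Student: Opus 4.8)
The plan is to prove both parts by the same mechanism used in the proof of Proposition~\ref{theta03}, but now taking a deeper constant term. Recall that Proposition~\ref{theta03} already gives a nonzero Fourier coefficient of $\Theta_{2l}^{(r)}$ attached to $(l^2)$, obtained by passing to the Whittaker coefficient of the theta representation of $GL_l^{(r)}(\A)$ (which is generic since $l\le r$). The idea for Proposition~\ref{theta04} is to combine this with the first part of Theorem~\ref{theta05} (equivalently Proposition~\ref{theta01}), which bounds ${\mathcal O}(\Theta_{2l}^{(r)})$ from above by ${\mathcal O}_c(\Theta_{2l}^{(r)})$, and with the structural facts about which orbits dominate $(l^2)$.

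First I would record the combinatorial input. When $1\le l\le r-1$ we have $2l<2r$, so in the notation $2l=\alpha r+\beta$ either $\alpha=0$ (if $2l<r$) or $\alpha=1$ (if $r<2l<2r$; note $2l=r$ is impossible as $r$ is odd). Thus ${\mathcal O}_c(\Theta_{2l}^{(r)})$ is either $(2l)$ (when $\alpha=0$) or $((r-1)(2l-r+1))$ (when $\alpha=1$). In either case one checks that the only symplectic partitions of $2l$ lying strictly between $(l^2)$ and ${\mathcal O}_c(\Theta_{2l}^{(r)})$ in the dominance order, together with $(l^2)$ and ${\mathcal O}_c$ themselves, form a short chain; in particular, for $l$ even the partition immediately above $(l^2)$ among symplectic partitions is $((l+2)(l-2))$, and for $l$ odd the partition $((l+1)(l-1))$ lies above $(l^2)$. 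This is the elementary partition bookkeeping that underlies both statements.

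For part (1), since $l$ is odd the partition $(l^2)$ is not itself a symplectic partition in the relevant normalized sense used for Fourier coefficients — or rather, the nonzero coefficient attached to $(l^2)$ forces, by the standard ``raising'' lemmas (as in \cite{G-R-S3}, Lemmas 2.4 and 2.6, already invoked in the proof of Proposition~\ref{theta01}, together with the collapse of non-symplectic partitions), the existence of some ${\mathcal O}\in{\mathcal O}(\Theta_{2l}^{(r)})$ with ${\mathcal O}\ge((l+1)(l-1))$. Concretely, I would argue that a nonzero coefficient for $(l^2)$ with $l$ odd cannot be maximal, because $(l^2)$ collapses, and hence produces a nonzero coefficient for a strictly larger orbit, the smallest candidate being $((l+1)(l-1))$. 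For part (2), the hypothesis is precisely that every ${\mathcal O}\in{\mathcal O}(\Theta_{2l}^{(r)})$ satisfies ${\mathcal O}\ge(l^2)$; combined with the upper bound ${\mathcal O}\le{\mathcal O}_c(\Theta_{2l}^{(r)})$ from Theorem~\ref{theta05}(1) and the fact that $l\ne r-1$ (so that $2l\ne 2r-2$, keeping ${\mathcal O}_c$ away from a degenerate case), the chain of symplectic partitions between $(l^2)$ and ${\mathcal O}_c$ is totally ordered, forcing ${\mathcal O}(\Theta_{2l}^{(r)})$ to be a singleton; and since $l$ is even, $(l^2)$ is itself symplectic but we must rule out $(l^2)$ being maximal — here I would use Proposition~\ref{theta03} more carefully, or a descent argument analogous to the proof of Proposition~\ref{theta01}, to show that whenever the coefficient for $(l^2)$ is nonzero and $l$ is even with $l<r-1$, one can ``raise'' to get a nonzero coefficient for $((l+2)(l-2))$, whence ${\mathcal O}(\Theta_{2l}^{(r)})\ge((l+2)(l-2))$.

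The main obstacle I anticipate is the raising step: showing that the nonzero $(l^2)$-coefficient produced by Proposition~\ref{theta03} is not maximal, i.e.\ genuinely propagates upward to $((l+1)(l-1))$ (for $l$ odd) or $((l+2)(l-2))$ (for $l$ even, $l<r-1$). This is not purely formal — the generic ``raising'' lemmas of \cite{G-R-S3} apply to composite partitions and collapses, so I would need to identify $(l^2)$ (resp.\ its behaviour for $l$ even) with an orbit whose nonvanishing, via Lemma 2.4 or 2.6 there, implies nonvanishing for the next orbit up. For $l$ even the subtlety is that $(l^2)$ is already a symplectic partition, so one cannot simply appeal to collapse; instead one likely needs to go back into the proof of Proposition~\ref{theta03}, exploit the extra freedom coming from $l<r-1$ (which means the $GL_l^{(r)}$ theta representation, being generic, supports more than just the Whittaker coefficient and in particular a suitable $((l+2)(l-2))$-type coefficient on $Sp_{2l}$), and run the same unfolding in reverse to land on $((l+2)(l-2))$. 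I expect this reverse unfolding, rather than the partition combinatorics, to be where the real work lies, and the hypothesis $l\ne r-1$ in part (2) to be exactly what keeps that argument from breaking down.
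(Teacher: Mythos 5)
There is a genuine gap in both parts: the ``raising'' step you defer is the entire content of the proposition, and the general principles you invoke do not supply it. For part (1) your premise is false: $(l^2)$ with $l$ odd \emph{is} a symplectic partition (the odd part $l$ occurs with even multiplicity $2$), so there is no collapse, and nothing formal prevents $(l^2)$ from being the maximal orbit of a genuine automorphic representation --- indeed $(r^2)$, with $r$ odd, is exactly the maximal orbit of $\Theta_{2r}^{(r)}$ established in Theorem~\ref{theta05}, which is why the restriction $l\le r-1$ matters. Moreover Lemmas 2.4 and 2.6 of \cite{G-R-S3} propagate \emph{vanishing} from orbits of the form $((2k)1^{\dots})$ or $((2k+1)^21^{\dots})$ to composite partitions; they never convert a nonzero coefficient at one orbit into a nonzero coefficient at a strictly larger one. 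The paper's argument for (1) is of a different nature: for $l$ odd the stabilizer of the $(l^2)$-coefficient contains a diagonally embedded $SL_2$, and since $l\le r-2$ the coefficient \eqref{theta3} defines a genuine automorphic function on $SL_2^{(r)}(\A)$; a genuine function cannot be constant, hence has a nontrivial Whittaker coefficient, and integrating further against the one-parameter group $x(y)$ with a nontrivial character and performing a root exchange yields a nonzero coefficient for $((l+1)(l-1))$. This use of genuineness is the key idea and is absent from your proposal.

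For part (2), your derivation of the singleton property from the chain of at-most-two-row partitions above $(l^2)$ agrees with the paper, but the decisive step --- ruling out $(l^2)$ itself as the maximal orbit when $l$ is even and $l\ne r-1$ --- is left as a hoped-for ``reverse unfolding,'' and the paper does something quite different and quantitative. Since for $l$ even the stabilizer carries no useful unipotents, one uses the elements $t(a)$ with $a$ an $r$-th power and $w_0=\diag(J_2,\dots,J_2)$: setting $\Lambda(a)$ to be the $(l^2)$-coefficient evaluated at $\widetilde{t}(a)w_0$, the Fourier-expansion and root-exchange computation (which now requires an exact identity, legitimized by the hypothesis that every maximal orbit is $\ge(l^2)$ so that stray coefficients such as the one for $((l+2)1^{l-2})$ vanish) together with the value $|a|^{l(l+1)(r-1)/2r}$ of the character of $\Theta^{(r)}$ on $\widetilde{h}(a)$ gives $\Lambda(a)=|a|^{c}\Lambda_0(e)$ with $c=\tfrac{l(l+1)(r-1)}{2r}-\tfrac{l^2}{2}$, while conjugating $w_0$ first gives $\Lambda(a)=|a|^{-c}\Lambda_0(e)$. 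Since $\Lambda_0(e)\ne0$ by Proposition~\ref{theta03}, this forces $c=0$, i.e.\ $l=r-1$, contradicting the hypothesis; hence the unique maximal orbit is strictly above $(l^2)$, and being a symplectic two-row partition it is $\ge((l+2)(l-2))$. Your instinct that $l\ne r-1$ is what keeps the argument alive is correct, but without this equivariance computation (or a genuine substitute) your proposal proves neither lower bound.
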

\begin{proof}
Consider first the case that $l$ is odd. By Proposition~\ref{theta03}, the Fourier coefficient  \eqref{theta3} is nonzero.  However,
the stabilizer of the unipotent orbit $(l^2)$ in $Sp_{2l}$ is $SL_2$, 
which embeds in the parabolic subgroup with Levi isomorphic to $GL_2^{(l+1)/2}$ by the diagonal embedding. 
Note that since $1\le l\le r-1$ and $l$ is odd, in fact $1\leq l\leq r-2$.  Thus the cover $Sp_{2l}^{(r)}({\A})$ restricts via this embedding to a group isomorphic to
$SL_2^{(r)}({\A})$.
The integral \eqref{theta3} then gives an automorphic function of $g\in SL_2^{(r)}({\A})$ which is genuine. 
Therefore, it can not be constant, and so this function has a nontrivial Whittaker coefficient. 

For $y\in\A$, let $x(y)\in Sp_{2l}(\A)$ be given by
 $x(y)=I_{2l}+\sum_{i=1}^{(l-1)/2}ye'_{2i-1,2i}+
ye_{l,l+1}$. 
We conclude that there a choice of data such that the integral
\begin{equation}\label{theta6}
\int\limits_{[V_l]}
\int\limits_{[\mathbb{G}_a]}
\varphi^{(r)}(vx(y)g)\psi_{V_l}(v)\,\psi(\alpha y)\,dy\,dv
\end{equation}
is not zero. Here $\alpha\in F^*$. 
Then  is not hard to check that, after root exchange, the integral \eqref{theta6} has as inner integration a Fourier coefficient corresponding 
to the unipotent orbit $((l+1)(l-1))$. We refer to \cite{F-G1}, following (9) there, for a similar computation.
Hence the first part is proved.

Next we consider the case when $l$ is even. The argument is different 
since we do not have unipotent elements inside the stabilizer of this unipotent orbit. 
However, the stabilizer contains the subgroup 
generated by the matrices $t(a)=\text{diag}(a,a^{-1},\ldots,a,a^{-1})$ and $w_0=\text{diag}(J_2,\ldots,J_2)$, 
with $J_2$ defined in Section~\ref{basic}.  Recall that the elements $\widetilde{t}(a)=(t(a),\eta^{-1}(t(a)))\in Sp_{2l}^{(r)}(\A)$ were introduced at the end of Section~\ref{basic}.

Let $a\in \A^*$ be an $r$-th power, and define 
\begin{equation}\label{theta7}
\Lambda(a)=\int\limits_{[V_l]}
\varphi^{(r)}(v\widetilde{t}(a)w_0)\,\psi_{V_l}(v)\,dv.
\end{equation}
Denote also by $\Lambda_0(e)$ integral \eqref{theta3} with $g=e$. Since $w_0$  stabilizes  the character 
$\psi_{V_l}$ we have $\Lambda(e)=\Lambda_0(e)$.

Performing Fourier expansions and root exchanges similar to those in the proof of Proposition 
\ref{theta03}, we obtain that $\Lambda(a)$ is equal to
\begin{equation}\label{theta8}
\int\varphi^{(r)}\left (\begin{pmatrix} I_l&A\\ &I_l\end{pmatrix}
\begin{pmatrix} B&\\ &B^*\end{pmatrix}\begin{pmatrix} I_l&\\ C&I_l\end{pmatrix}\widetilde{h}(a)ww_0\right )\psi_0(B)\,dA\,dB\,dC.
\end{equation}
Here, $A$ is integrated over $[\Mat_l^0]$, $B$ is integrated as in \eqref{theta4}, and $C$ is integrated over $\Mat_{l,1}^0(\A)$ (defined after \eqref{theta4}).  
The Weyl element $w$ was defined following \eqref{theta3}, and $\widetilde{h}(a)=w\widetilde{t}(a)w^{-1}=(h(a),\eta(t(a))^{-1})$, where $h(a)=\text{diag}(aI_l,a^{-1}I_l)$.
We want to emphasize the difference between the computation performed here, and the one performed in Proposition \ref{theta03}. 
In that Proposition we needed to show that a certain integral vanished for all choices of data. In this Proposition we need a precise identity. 
This is why we made the assumption that ${\mathcal O}\ge (l^2)$ for all ${\mathcal O} \in {\mathcal O}(\Theta_{2l}^{(r)})$. Indeed, this assumption implies that the
set ${\mathcal O}(\Theta_{2l}^{(r)})$ is a singleton, and that 
the representation $\Theta_{2l}^{(r)}$ has no non-zero Fourier coefficient 
corresponding to unipotent orbits which are not related to $(l^2)$, for example the orbit $((l+2)1^{l-2})$. To prove that 
$\Lambda(a)$ is equal to integral \eqref{theta8}, we need to make use of this.

Recall that $a$ is an $r$-th power. In \eqref{theta8} conjugate the matrix $\widetilde{h}(a)$ to the left. First, we get the factor $|h|^{-l^2/2}$ from the 
change in variables in $C$. The torus $h(a)$ commutes with the matrix $\text{diag}(B,B^*)$. We are left with the computation of the 
constant term consisting of all matrices $\left(\begin{smallmatrix} I&A\\ &I\end{smallmatrix}\right)$
where $A\in \Mat_l^0$. It follows from \cite{F-G2} Proposition 2.1 that
we obtain the factor of $\chi_{Sp_{2l}^{(r)},\Theta}(\widetilde{h}(a))$. By the formula in \cite{F-G2}, top of p.\ 93, this last term is 
equal to $|a|^{l(l+1)(r-1)/2r}$.  Putting this together, we have proved that   
\begin{equation}\label{theta9}
\Lambda(a)=|a|^{\frac{l(l+1)(r-1)}{2r}-\frac{l^2}{2}}\int\varphi^{(r)}\left (\begin{pmatrix} I_l&A\\ &I_l\end{pmatrix}
\begin{pmatrix} B&\\ &B^*\end{pmatrix}\begin{pmatrix} I_l&\\ C&I_l\end{pmatrix}ww_0\right )\psi_0(B)\,dA\,dB\,dC.
\end{equation}
Hence, $\Lambda(a)=|a|^{\frac{l(l+1)(r-1)}{2r}-\frac{l^2}{2}}\Lambda(e)=|a|^{\frac{l(l+1)(r-1)}{2r}-\frac{l^2}{2}}\Lambda_0(e)$.

Next we compute $\Lambda(a)$ in a different way. Going back to the definition in \eqref{theta7}, we first conjugate $w_0$ to the left. We obtain 
\begin{equation}\label{theta10}\notag
\Lambda(a)=\int\limits_{[V_l]}
\varphi^{(r)}(v \widetilde{t}(a^{-1}))\,\psi_{V_l}(v)\,dv.
\end{equation}
Repeating the same computations we performed in equations \eqref{theta8} and \eqref{theta9}, we obtain 
$\Lambda(a)=|a|^{-\frac{l(l+1)(r-1)}{2r}+\frac{l^2}{2}}\Lambda_0(e)$. By Proposition~\ref{theta03}, there is a choice of data 
such that $\Lambda_0(e)$ is not zero. Hence, we must have $\frac{l(l+1)(r-1)}{2r}=\frac{l^2}{2}$. This is equivalent to $l=r-1$,
so the second part follows.
\end{proof}

The first part of Theorem~\ref{theta05} is  in Proposition~\ref{theta01}. We now give the proof of second part of the Theorem.
\begin{proof} 
Consider first the case $l=0$. Then applying Proposition~\ref{theta02}, it is enough to prove the theorem for the group 
$Sp_{2r}^{(r)}({\A})$. According to Conjecture~\ref{conj1}, we need to prove that $\Theta_{2r}^{(r)}$ has a non-zero Fourier 
coefficient corresponding to the unipotent orbit $(r^2)$. This was proved in Proposition~\ref{theta03}. 

For all other cases stated in Theorem~\ref{theta05}, part 2, using Proposition~\ref{theta02}, it is enough to prove the result 
for the group $Sp_{2l}^{(r)}({\A})$. The case $l=1$ is clear. Next consider $l=2$. If $r=3$, it follows from Proposition~\ref{theta01} that 
$\Theta_{4}^{(3)}$ is not generic.  From Proposition~\ref{theta02},  $\Theta_{4}^{(3)}$ has a nonzero Fourier coefficient corresponding to the 
orbit $(2^2)$. If $r\ge 5$, it follows from Proposition~\ref{theta04} that ${\mathcal O}(\Theta_{4}^{(r)})$ is greater than $(2^2)$. 
Hence  $\Theta_{4}^{(r)}$ is generic and we are done. 

When $l=r-2$, the Conjecture states that ${\mathcal O}(\Theta_{2(r-2)}^{(r)})=\{((r-1)(r-3))\}$. This follows from Proposition~\ref{theta04}. 
When $l=r-1$, the Conjecture states that ${\mathcal O}(\Theta_{2(r-1)}^{(r)})=\{((r-1)^2)\}$. This follows from Proposition~\ref{theta03}.
Thus the second part of the Theorem is proved.
\end{proof}

\section{Cuspidality of the Lift}\label{cusp1}
In this Section we discuss the cuspidality of the representation 
$\sigma_{n,k}^{(r)}$. The main result is that the first non-zero occurrence of the generalized theta lift is automatically cuspidal.  This generalizes Rallis's tower property,
and is found in Theorem \ref{th2} below. The proof requires showing the vanishing of constant terms. We will establish this by considering various Fourier expansions,
using the process of root exchange (see \cite[Section 7.1]{G-R-S4}), and making use of two key ingredients: 
 the smallness of the representation 
$\Theta_{2n+k(r-1)}^{(r)}$ established in Proposition~\ref{theta01}, and the cuspidality of the representation $\pi^{(\kappa r)}$. 

We begin with several Lemmas. Let $U_m$ be the unipotent subgroup $U_m=U_{1,m+1,n+kr_1-m-1}^0$,
and let
$\psi_{U_m}$ denote the character $\psi_{U_m}=\psi_{U_{1,m+1,n+kr_1-m-1}}$  of this group (see \eqref{character}).  The first Lemma is closely related to Lemma 2.2 in \cite{G-R-S3}.
\begin{lemma}\label{lem1}
Suppose that $r\le m\le n+kr_1-1$. Then the integral 
\begin{equation}\label{cusp10}
\int\limits_{[U_m]}
\theta_{2n+k(r-1)}^{(r)}(u)\,\psi_{U_m}(u)\,du
\end{equation}
is zero for all choices of data.
\end{lemma}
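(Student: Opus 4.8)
The plan is to reduce integral \eqref{cusp10} to a Fourier coefficient of $\Theta_{2n+k(r-1)}^{(r)}$ attached to a unipotent orbit strictly larger than (or not related to) ${\mathcal O}_c(\Theta_{2n+k(r-1)}^{(r)})$, and then to invoke Proposition~\ref{theta01} (equivalently the first part of Theorem~\ref{theta05}) to conclude the vanishing. First I would identify the unipotent orbit to which the integral in \eqref{cusp10} is naturally attached. The group $U_m=U_{1,m+1,n+kr_1-m-1}^0$ is a subgroup of the unipotent radical $U_{1,m+1,\ast}$ whose associated Levi is $GL_1^{m+1}\times Sp_\ast$, and the character $\psi_{U_m}$ is the ``Whittaker-type'' character picking up the superdiagonal $GL_1$ blocks; so before taking into account the $U^0$-restriction, one is looking at the coefficient for the orbit $((m+1)^{?}\cdots)$. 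More precisely I expect the integral in \eqref{cusp10}, after the standard manipulation (conjugating by a Weyl element and exchanging roots, exactly as in the proof of Lemma~2.2 of \cite{G-R-S3}), to be expressible in terms of a Fourier coefficient of $\theta_{2n+k(r-1)}^{(r)}$ associated to a partition whose largest part is $\ge m+1 \ge r+1 > r$. The key inequality is the hypothesis $m\ge r$, which forces this largest part to exceed $r$.

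The central step is therefore the following reduction. Using the argument of \cite{G-R-S3}, Lemma~2.2 (which is stated for symplectic groups but whose proof carries over verbatim to the metaplectic covers, as is noted several times in the paper, e.g.\ in the proof of Proposition~\ref{theta01}), one shows that the nonvanishing of \eqref{cusp10} for some choice of data would imply the nonvanishing of a Fourier coefficient of $\Theta_{2n+k(r-1)}^{(r)}$ attached to an orbit of the form $((2k')1^{\ast})$ or $((2k'+1)^2 1^\ast)$ with $2k'>r$ (or more generally an orbit with a part $>r$). Concretely, I would: (i) embed $U_m$ inside the larger unipotent radical and perform a Fourier expansion along the complementary directions; (ii) conjugate by an appropriate Weyl element so that the resulting character becomes a character of the type $\psi_{\mathcal O}$ attached to a genuine unipotent orbit $\mathcal O$; (iii) carry out root exchange (\cite{G-R-S4}, Section~7.1) to clean up the unipotent integration. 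Since the largest part of $\mathcal O$ is at least $m+1\ge r+1$, the orbit $\mathcal O$ cannot be $\le {\mathcal O}_c(\Theta_{2n+k(r-1)}^{(r)})$ — whose parts are all $\le r$ by Conjecture~\ref{conj1}'s description — so $\mathcal O$ is either strictly larger than or incomparable with ${\mathcal O}_c$. Proposition~\ref{theta01} then gives that the corresponding Fourier coefficient vanishes identically, hence \eqref{cusp10} vanishes for all choices of data.

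The main obstacle I anticipate is purely bookkeeping: tracking precisely which unipotent subgroup and which character one lands on after the Weyl-element conjugation and the root exchanges, and verifying that the resulting configuration genuinely corresponds to a unipotent orbit with a part exceeding $r$ (rather than merely a ``mixed'' Fourier coefficient that one would first have to massage further via \cite{J-L}, Proposition~\ref{j-l}, into a genuine orbit coefficient). A secondary point to be careful about is that all of this must be done on the $r$-fold cover $Sp_{2n+k(r-1)}^{(r)}(\A)$; but since unipotent subgroups split canonically in the cover (as recalled in Section~\ref{basic}) and the Weyl group embeds compatibly, none of the cocycle issues obstruct the argument, and the computations are literally the same as in the linear case. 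Once the orbit is correctly identified, the vanishing is immediate from Proposition~\ref{theta01}, so essentially all the work is in the reduction.
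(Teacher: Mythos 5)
There is a genuine gap, and it is precisely at the point you dismiss as ``bookkeeping.'' Your plan is to show that nonvanishing of \eqref{cusp10} forces the nonvanishing of a Fourier coefficient of $\Theta_{2n+k(r-1)}^{(r)}$ attached to an orbit with a part exceeding $r$, and then to quote Proposition~\ref{theta01}. But Weyl conjugation and root exchange alone cannot produce such a reduction: the character $\psi_{U_m}$ has no component on the long root in the $(m+1)$-st row (nor on the row $Y$ linking the $GL_1^{m+1}$ block to the $Sp$ block), and root exchange never creates new nontrivial character values --- it only trades which unipotent directions are integrated over $[\cdot]$ versus over $\A$. To bring an orbit character into play one must Fourier--expand along the missing coordinates, and then the constant-term branches of the expansion are unavoidable. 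Those branches are not Fourier coefficients attached to unipotent orbits of $Sp_{2n+k(r-1)}$, so Proposition~\ref{theta01} says nothing about them, and they cannot be ``massaged'' into orbit coefficients; this is exactly the part of the argument your proposal does not supply.

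The paper's proof handles this by a descending induction on $m$ together with an ingredient beyond Proposition~\ref{theta01}. Expanding \eqref{cusp10} along the long-root group $\{I+pe_{m+1,\,2n+k(r-1)-m}\}$, the nontrivial characters do give exactly the orbit coefficients for $((2m+2)1^{2n+k(r-1)-2m-2})$, which vanish by Proposition~\ref{theta01} since $m\ge r$ --- this matches your intuition. The constant term, however, is then expanded along the remaining row $\{x(p)\}U_m\backslash U_{m+1}$: the nontrivial $Sp$-orbit there reproduces the same integral with $m$ replaced by $m+1$ (whence the induction, with base case $m=n+kr_1-1$), while the trivial orbit yields the constant term along the parabolic with Levi $GL_{m+1}\times Sp_{2(n+kr_1-m-1)}$ composed with the Whittaker character of the $GL_{m+1}$ factor. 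Its vanishing rests on Proposition 1 of \cite{F-G2} (constant terms of $\Theta^{(r)}$ are built from $\Theta^{(r)}_{GL_{m+1}}$ tensored with a lower-rank theta) together with the Kazhdan--Patterson theorem \cite{K-P} that the theta representation of the $r$-fold cover of $GL_{m+1}$ is not generic once $m+1>r$. This $GL$-input is logically independent of the symplectic smallness statement you rely on, so a proof that invokes only Proposition~\ref{theta01} (plus Weyl conjugation and root exchange in the style of \cite{G-R-S3}, Lemma 2.2) is incomplete as it stands.
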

\begin{proof}
Let $x(p)=I+pe_{m+1,2n+k(r-1)-m}$, with $I$ the identity matrix of size $2n+k(r-1)$. 
Expand \eqref{cusp10} along the abelian
group $\{ x(p)\}$.  This is a sum of integrals against characters $\psi(\alpha p)$, $\alpha\in F$. The nontrivial terms contribute zero. 
Indeed, the Fourier coefficient we obtain from a nontrivial term corresponds to the unipotent orbit $((2m+2)1^{2+k(r-1)-2m-2})$. Since $m\ge r$, this last 
unipotent orbit is not comparable with the unipotent orbit ${\mathcal O}_c(\Theta_{2n+k(r-1)}^{(r)})$ defined in Conjecture~\ref{conj1} above. 
By Proposition~\ref{theta01}, these Fourier coefficients vanish.

We are left with the contribution from $\alpha=0$.  That is, integral \eqref{cusp10} is equal to
\begin{equation}\label{cusp11}
\int\limits_{[U_m]}
\int\limits_{[\mathbb{G}_a]}
\theta_{2n+k(r-1)}^{(r)}(ux(p))\,\psi_{U_m}(u)\,dp\,du.
\end{equation}
The quotient group $\{x(p)\}U_m\backslash U_{m+1}$ can be identified with a row vector of size $2(n-m-1)+k(r-1)$. 
Expand integral \eqref{cusp11} along this quotient. There are two orbits under the action of the group $Sp_{2(n-m-1)+k(r-1)}(F)$. 
The trivial orbit contributes zero to integral \eqref{cusp11}. Indeed, to prove this, we use Proposition 1 in \cite{F-G2},
which identifies the constant term of a theta function with lower rank theta functions, with $a$ there equal to $m+1$. 
This implies that as an inner integration we obtain the Whittaker coefficient of the theta function defined on an $r$-fold cover of  $GL_{m+1}({\A})$. 
Since $m+1>r$, it follows from \cite{K-P} that this Whittaker coefficient is zero. We conclude that the integral \eqref{cusp10} is a sum of integrals of the form
\begin{equation}\label{cusp12}\notag
\int\limits_{[U_{m+1}]}
\theta_{2n+k(r-1)}^{(r)}(u)\,\psi_{U_{m+1}}(u)\,du.
\end{equation}
Applying induction, these integrals are all zero. Hence integral \eqref{cusp10} is zero for all choices of data.
\end{proof}

In the next Lemma we establish another vanishing result. Assume again that $r\le m\le n+kr_1-1$. Let $V_m$ denote the maximal unipotent subgroup of $GL_{m+1}$ consisting of upper 
triangular unipotent matrices. Let $\Mat_{m+1}^{00}$ denote the subgroup of $\Mat_{m+1}^{0}$ of all matrices $x\in \Mat_{m+1}^{0}$ such that $x_{i,j}=0$ for all $i\ge j$. 
Let $U_m^0$ denote the subgroup of $U_m$ which consists of all matrices 
\begin{equation}\notag 
t(v,x)=\begin{pmatrix} v&&x\\ &I_{2(n+kr_1-m-1)}&\\ &&v^*\end{pmatrix}\ \ \ \ v\in V_m,\ \ \ x\in \Mat_{m+1}^{00}.
\end{equation}
By restriction, the character $\psi_{U_m}$ is a character of $U_m^0$.
\begin{lemma}\label{lem2}
Assume that $r\le m\le n+kr_1-1$. Then the integral 
\begin{equation}\label{cusp13}
\int\limits_{[U_m^0]}
\theta_{2n+k(r-1)}^{(r)}(u)\,\psi_{U_m}(u)\,du
\end{equation}
is zero for all choices of data.
\end{lemma}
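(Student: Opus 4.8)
The plan is to deduce the vanishing of \eqref{cusp13} from the vanishing of \eqref{cusp10} established in Lemma~\ref{lem1}, together with Proposition~\ref{theta01}, by a Fourier expansion of the type used in the proof of Lemma~\ref{lem1}. First I would record the structure of $U_m^0$: it is the product of the group $V_m$ (the maximal upper unipotent subgroup of the $GL_{m+1}$-block) with the abelian group $\{t(I,x):x\in\Mat_{m+1}^{00}\}$, and the restriction of $\psi_{U_m}$ to $U_m^0$ is the standard Whittaker character $\psi_{V_m}$ of $GL_{m+1}$ on the first factor and is trivial on the second. Moreover $U_m^0$ sits inside $U_m$, and $U_m$ is obtained from $U_m^0$ by adjoining certain further upper unipotent directions of $Sp_{2n+k(r-1)}$ — principally the couplings of the first $m$ copies of $GL_1$ to the symplectic block $Sp_{2(n+kr_1-m-1)}$ — on all of which $\psi_{U_m}$ is trivial.

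Next I would Fourier-expand the theta function in \eqref{cusp13} along these additional directions, organizing them into abelian subquotients and using root exchange as in \cite[Section~7.1]{G-R-S4}. The contribution of the trivial character is, after extending $\psi_{U_m}$ trivially, precisely the integral \eqref{cusp10} over $[U_m]$, which vanishes by Lemma~\ref{lem1}. Each nontrivial character, after conjugation by a Weyl group element and by an element of $Sp_{2(n+kr_1-m-1)}(\A)$ that preserves $\psi_{V_m}$, yields a Fourier coefficient of $\theta_{2n+k(r-1)}^{(r)}$ attached to a unipotent orbit of $Sp_{2n+k(r-1)}$ one of whose parts has size at least $m+1$, since the Whittaker datum on the $(m+1)$-dimensional $GL$-block persists. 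As $m\ge r$, this part exceeds $r$, so the orbit is not $\le\mathcal{O}_c(\Theta_{2n+k(r-1)}^{(r)})$, all of whose parts are at most $r$; hence the coefficient vanishes by Proposition~\ref{theta01}. Where it is convenient one may instead, as in the proof of Lemma~\ref{lem1}, reach the Whittaker coefficient of the theta representation of $GL_{m+1}^{(r)}(\A)$, which vanishes because $m+1>r$ by \cite{K-P}. Since all contributions vanish, so does \eqref{cusp13}.

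The main obstacle is, as in Lemma~\ref{lem1}, the combinatorial bookkeeping of the Fourier expansion and the successive root exchanges: one must verify that each nontrivial mode produced along the way is genuinely attached to a unipotent orbit possessing a part larger than $r$ — equivalently, an orbit incomparable to or larger than $\mathcal{O}_c$ — so that Proposition~\ref{theta01} applies, and that the trivial mode really reassembles the integral \eqref{cusp10}. This is forced by the hypothesis $r\le m$, which makes the $GL_{m+1}$-block, on which $\psi_{U_m}$ restricts to a Whittaker character, too large for genericity; once this is in hand the argument closes exactly as for Lemma~\ref{lem1}.
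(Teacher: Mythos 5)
There is a genuine gap at the central step of your argument: the claim that each nontrivial Fourier mode ``yields a Fourier coefficient attached to a unipotent orbit one of whose parts has size at least $m+1$, since the Whittaker datum on the $(m+1)$-dimensional $GL$-block persists,'' and hence vanishes by Proposition~\ref{theta01}. Proposition~\ref{theta01} applies to honest Fourier coefficients attached to an orbit, i.e.\ integrals over the full group $U_{\mathcal O}$ against $\psi_{\mathcal O}$; the mere presence of the Whittaker character of $GL_{m+1}$ in the character data does not put the expansion terms in that form, and conjugation (by a Weyl element and by $Sp_{2(n+kr_1-m-1)}(F)$ --- rational points, not adelic, are what automorphy permits) does not enlarge the group of integration. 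If your inference were valid it would apply verbatim to the unexpanded integral \eqref{cusp13} itself, whose character data already involves the full Whittaker character of $GL_{m+1}$ and hence nilpotent data with a part $m+1>r$, and Lemma~\ref{lem2} would follow with no expansion at all; the entire content of the lemma is precisely to upgrade ``the character corresponds to a too-large orbit'' to actual vanishing. Moreover the nontrivial modes you must control are characters on the couplings of rows $1,\dots,m$ of the $GL_{m+1}$-block to the middle symplectic block; such a character attaches an arrow out of a \emph{non-terminal} node of the Whittaker chain, and a Jordan-type computation shows the associated nilpotent still has largest part $m+1$ and, more to the point, these terms cannot be conjugated into the form of an orbit coefficient to which Proposition~\ref{theta01} applies. (Only the row-$(m+1)$ coupling lengthens the chain, and that direction is exactly the one absent from $U_m$; it belongs to the setting of Lemma~\ref{lem1}.)

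The paper's proof handles the nontrivial modes by a different mechanism, which is the part your outline omits. The missing directions are adjoined one row at a time (the groups $Y_j$, $Y_j'$), and at each such step \emph{all} Fourier modes, trivial and nontrivial alike, are identified --- via left invariance under rational points of complementary lower unipotent groups $Z_j$, $Z_j'$, as in the passage from \eqref{cusp14} to \eqref{cusp15} --- with right translates of a single integral over the enlarged group with the character extended trivially; no appeal to smallness is made, or possible, at these steps. Proposition~\ref{theta01} is invoked exactly once, in the expansion along the one-parameter group $\{x'(p)\}$ at the long-root position in row $b(m)+1$, where the nontrivial modes genuinely dominate the orbit $((2b(m)+2)1^{2(n+kr_1-b(m)-1)})$, and only after all rows have been filled in (in a carefully chosen order, since $U_m^0$ is not normal in $U_m$ and no single abelian expansion exists) does one arrive at \eqref{cusp10} and conclude by Lemma~\ref{lem1}. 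So while your skeleton (expand the missing directions, end at \eqref{cusp10}, use smallness for bad modes) points in the right direction, the justification offered for discarding the nontrivial modes does not hold, and supplying the correct mechanism is the substance of the proof.
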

\begin{proof}
We start by defining some unipotent subgroups of $Sp_{2n+k(r-1)}$. Let $a=2(n+kr_1-m-1)$.
Let $b(m)=c(m)=m/2$ if $m$ is even, and $b(m)=(m+1)/2$, $c(m)=(m-1)/2$ if $m$ is odd. 
For $1\le j\le b(m)$ let $Y_j$ be the upper triangular unipotent subgroup
\begin{equation}\label{uni1}\notag
Y_j=\{y_j(p_1,\ldots,p_{a+j})=I_{2(n+kr_1)}+\sum_{i=1}^{a+j}p_ie'_{j,m+i+1}\},
\end{equation}
and for $1\le j\le c(m)$ let $Y_j'$ be the upper triangular unipotent subgroup
\begin{equation}\label{uni2}\notag
Y_j'=\{y_j'(p_1,\ldots,p_{a+c(m)-j+1},q)=I_{2(n+kr_1)}+\sum_{i=1}^{a+c(m)-j+1}p_ie'_{b(m)+j,m+i+1}+qe_{b(m)+j,a+c(m)+m-j+3}\}.
\end{equation}
We also define corresponding lower unipotent groups. For $1\le j\le c(m)$ let
\begin{equation}\notag
Z_j=\{z_j(p_1,\ldots,p_{a+j})=I_{2(n+kr_1)}+\sum_{i=m+2}^{a+j+m+1}p_ie'_{j,j+1}\},
\end{equation}
and for $1\le j\le b(m)$ let
\begin{multline}\notag
Z_j'=\{z_j'(p_1,\ldots,p_{a+b(m)-j+1})=\\ =I_{2(n+kr_1)}+\sum_{i=1}^{a+b(m)-j}p_ie'_{i,c(m)+j+1}+p_{a+b(m)-j+1}e_{a+b(m)-j+1,c(m)+j+1}\}.
\end{multline}
To prove the Lemma, we expand \eqref{cusp13} along the quotient $[Y_1]$. 
Doing so, we see that the integral \eqref{cusp13} is equal to
\begin{equation}\label{cusp14}
\sum_{\xi_i\in F}\,
\int\limits_{[Y_1]}
\int\limits_{[U_m^0]}
\theta_{2n+k(r-1)}^{(r)}(y_1(p_1,\ldots,p_{a+1})u)\,\psi_{U_m}(u)\,
\psi(\sum\xi_i p_i)\,dy_1\,du.
\end{equation}
Since the function $\theta_{2n+k(r-1)}^{(r)}$ is automorphic, for $\xi_j\in F$ we have 
$$\theta_{2n+k(r-1)}^{(r)}(z_1(-\xi_1,\ldots,-\xi_{a+1})h)=\theta_{2n+k(r-1)}^{(r)}(h).$$ Using this in the integral \eqref{cusp14} and then conjugating the matrix 
$z_1(-\xi_1,\ldots,-\xi_{a+1})$ to the right, we obtain (after a change of variables in $u$),
\begin{equation}\label{cusp15}
\sum_{\xi_i\in F}\,
\int\limits_{[Y_1]}
\int\limits_{[U_m^0]}
\theta_{2n+k(r-1)}^{(r)}(y_1(p_1,\ldots,p_{a+1})uz_1(-\xi_1,\ldots,-\xi_{a+1}))\,\psi_{U_m}(u)\,dy_1\,du.
\end{equation}
Thus if we prove that the inner integration in \eqref{cusp15} is zero for all choices of data, this will imply that the  integral \eqref{cusp13} is zero for all choices of data. 

Now we repeat this process with the inner integration of \eqref{cusp15}, this time using the groups $Y_2$ and $Z_2$. 
The process is visibly inductive, and we repeat it $b(m)$ times. More precisely, for all $1\le j\le b(m)$, we expand the corresponding integral along the group $Y_j$, 
and use the group $Z_j$ as above unless $j=b(m)$, $m$ odd, in which case we use the group $Z_1'$. 
Let $U_m^{00}$ denote the unipotent group generated by $U_m^0$ and by all $Y_j$ for $1\le j\le b(m)$. Then this shows that if the integral
\begin{equation}\label{cusp16}
\int\limits_{[U_m^{00}]}
\theta_{2n+k(r-1)}^{(r)}(u)\,\psi_{U_m}(u)\,du
\end{equation}
is zero for all choices of data, then the integral \eqref{cusp13} is zero for all choices of data. Here  $\psi_{U_m}$ is a character of $U_m^{00}$ obtained from $U_m^0$ by extending it trivially. 

Next, expand the integral \eqref{cusp16} along the unipotent abelian group 
$$\{ x'(p)=I_{2(n+kr_1)}+pe_{b(m)+1,a+c(m)+m+2}\}.$$  We claim that the contribution to the expansion from the non-constant terms is zero. 
Indeed, for these terms we obtain a Fourier coefficient which corresponds to the unipotent orbit $((2b(m)+2)1^{2(n+kr_1-b(m)-1)})$. 
Since $r\le m$, it follows that this unipotent orbit is not related to the orbit
${\mathcal O}_c(\Theta_{2n+k(r-1)}^{(r)})$.  
Hence by Proposition~\ref{theta01} these coefficients vanish. Thus the Lemma will follow once we prove that the integral
\begin{equation}\label{cusp17}
\int\limits_{[\mathbb{G}_a]}
\int\limits_{[U_m^{00}]}
\theta_{2n+k(r-1)}^{(r)}(x'(p)u)\,\psi_{U_m}(u)\,dp\,du
\end{equation}
is zero for all choices of data. To show this, observe that the group $\{x'(p)\}$ is the center of the group $Y_1'$. Thus we can expand the integral \eqref{cusp17} 
along the quotient $Y'_1/\{x'(p)\}$. Depending on the parity of $m$, we use $Z_1'$ or $Z_2'$ as above. Once again the argument is inductive, and after we carry it out with 
the groups $Y'_j$ for $1\le j\le c(m)$, we obtain the integral \eqref{cusp10}. By Lemma \ref{lem1}, since $r\le m$, this integral is zero for all choices of data. 
Lemma~\ref{lem2} follows.
\end{proof}

For the next Lemma, let $\alpha$ and $l$ be two positive integers which satisfy $2\le 2\alpha\le k$, and $(r-1)/2< l\le r-1$. 
We work with the unipotent group $U_{a,b,c}$ with $a=\alpha, b=l$ and $c=n+kr_1-l\alpha$. 
Consider the Fourier coefficient 
\begin{equation}\label{cusp18}
f(g)=
\int\limits_{[U_{\alpha,l,n+kr_1-l\alpha}^0]}
\theta_{2n+k(r-1)}^{(r)}(ug)\,\psi_{U_{\alpha,l,n+kr_1-l\alpha}}(u)\,du, \qquad g\in Sp_{2n+k(r-1)}^{(r)}(\A).
\end{equation}
For short we shall write $\psi_0$ for $\psi_{U_{\alpha,l,n+kr_1-l\alpha}}$. Because of the factorization in equation \eqref{mat2}, we 
may view \eqref{cusp18} as a function of $u'_{\alpha,n+kr_1-l\alpha}(0,Z)$ where $Z\in \Mat_\alpha^0({\A})$. We have
\begin{lemma}\label{lem3}
For $\alpha$ and $l$ in the range specified above, the Fourier coefficient \eqref{cusp18} is invariant under the adelic points of the group $\{u'_{\alpha,n+kr_1-l\alpha}(0,Z)\}$.  That is, 
for all $Z\in \Mat_\alpha^0({\A})$ we have $f(u'_{\alpha,n+kr_1-l\alpha}(0,Z)g)=f(g)$. 
\end{lemma}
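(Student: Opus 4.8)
The plan is to show this invariance by the standard method of Fourier expansion combined with the smallness of $\Theta_{2n+k(r-1)}^{(r)}$ established in Proposition~\ref{theta01}. The coefficient $f(g)$ in \eqref{cusp18} involves the group $U_{\alpha,l,n+kr_1-l\alpha}^0$, which by definition consists of matrices with $Y=Z=0$ in the factorization \eqref{mat2}. The quotient $U_{\alpha,l,n+kr_1-l\alpha}^1/U_{\alpha,l,n+kr_1-l\alpha}^0$ is parametrized exactly by the matrices $u'_{\alpha,n+kr_1-l\alpha}(0,Z)$ with $Z\in\Mat_\alpha^0$; this is an abelian unipotent group (it is the center-type piece of the generalized Heisenberg $U_{\alpha,c}'$). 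So I would first form the Fourier expansion of $f$ along $[\{u'_{\alpha,n+kr_1-l\alpha}(0,Z)\}]$: this expresses $f(g)$ as a sum over characters of this abelian quotient, indexed by matrices in $\Mat_\alpha^0(F)$ (via the trace pairing). The claim $f(u'(0,Z)g)=f(g)$ is precisely the statement that only the trivial character contributes.

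Next I would identify the nontrivial terms in this expansion with Fourier coefficients of $\theta_{2n+k(r-1)}^{(r)}$ corresponding to a strictly larger unipotent orbit. Concretely, for a rank-$j$ character in the $Z$-direction, combining the character $\psi_0$ on $U_{\alpha,l,n+kr_1-l\alpha}^0$ (which supplies the string of $GL_\alpha$-blocks realizing the "$(r-1)^k$"-type pattern, here with the block size $\alpha$ and $l$ copies) with the added $Z$-character produces a composite partition in which the first parts are increased. Since $l$ satisfies $(r-1)/2 < l \le r-1$, the rows $r$ of length $2l$ already exceed... more precisely, the partition $\big((2l+\text{something})\cdots\big)$ attached to a nontrivial term is not dominated by $\mathcal{O}_c(\Theta_{2n+k(r-1)}^{(r)})$ — it either strictly exceeds it or is incomparable to it. I would verify this orbit bookkeeping by the same root-exchange and Weyl-conjugation manipulations used in the proofs of Lemma~\ref{lem1} and Lemma~\ref{lem2}, reducing each nontrivial term to a Fourier coefficient along a unipotent orbit of the form $((2b)1^{2(\cdots)})$ or $((2b+1)^2 1^{\cdots})$ with $2b>r$, to which Proposition~\ref{theta01} (equivalently the first part of Theorem~\ref{theta05}) applies and forces vanishing. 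One must be slightly careful because $\Mat_\alpha^0$ is a space of matrices, not a line, so the expansion is over $GL_\alpha(F)$-orbits of characters; but there are only finitely many orbits (indexed by the rank of $Z$), and each nonzero-rank orbit is handled by the orbit comparison above, after conjugating a representative into a standard position.

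I expect the main obstacle to be the precise orbit computation: showing that every nontrivial term in the Fourier expansion along $\{u'(0,Z)\}$ genuinely factors through a Fourier coefficient attached to an orbit that Proposition~\ref{theta01} kills. This requires carefully tracking how the character $\psi_0$ (built from the $l$ copies of $GL_\alpha$ in $U_{\alpha,l,n+kr_1-l\alpha}$) interacts with the new $Z$-character, identifying the resulting unipotent group as (conjugate to) one associated to a larger partition, and checking the partition inequality using the constraint $(r-1)/2<l\le r-1$ — it is exactly this lower bound on $l$ that guarantees the relevant part-sizes overshoot $r$. The reductions to $((2b)1^{\cdots})$-type coefficients and the root exchanges themselves are routine (as in \cite{G-R-S4}, Section 7.1, and Lemmas~\ref{lem1}, \ref{lem2}), so once the orbit of each nontrivial term is correctly pinned down, the vanishing is immediate and the Lemma follows.
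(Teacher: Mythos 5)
Your proposal follows essentially the same route as the paper: expand $f$ along the abelian group $\{u'_{\alpha,n+kr_1-l\alpha}(0,Z)\}$, sort the nontrivial characters into orbits under $GL_\alpha(F)$ embedded diagonally $l$ times, and kill each nontrivial representative by Weyl conjugation and root exchange using the smallness of $\Theta^{(r)}_{2n+k(r-1)}$, with the bound $(r-1)/2<l$ forcing the relevant parts past $r$. The ``orbit bookkeeping'' you defer is precisely what the paper carries out via a two-case analysis of representatives (one case reduced by root exchange to a coefficient for the orbit $((2l+2)1^{2(n+kr_1-l-1)})$, killed by Proposition~\ref{theta01}, the other reduced to the integral of Lemma~\ref{lem2} with $m=2l-2$), and your remark that the orbits are indexed by the rank of $Z$ is not literally correct (congruence classes of symmetric matrices over $F$ carry finer invariants), though harmless since only a representative with a nonzero entry in standard position is needed.
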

\begin{proof}
Since the group of all matrices  $u'_{\alpha,n+kr_1-l\alpha}(0,Z)$ is an abelian subgroup of $Sp_{2n+k(r-1)}$, we can expand the function $f(g)$ along it. We obtain
\begin{equation}\label{cusp19}
f(g)=\sum_{\gamma\in \Mat_\alpha^0(F)}\quad
\int\limits_{[\Mat_\alpha^0]}
f(u'_{\alpha,n+kr_1-l\alpha}(0,Z)g)\,\psi_\gamma(Z)\,dZ,
\end{equation}
where $\gamma\to\psi_\gamma$ is an isomorphism of the abelian group $\Mat_\alpha^0(F)$ with its dual.
We need to prove that the nontrivial characters contribute zero to the expansion. The group $GL_\alpha(F)$, embedded in $Sp_{2n+k(r-1)}(F)$ 
by the map $\delta\mapsto\text{diag}(\delta,\delta,\ldots,\delta,I,\delta^*,\ldots,\delta^*)$, acts on the set $\{\gamma\}$. 
Here $\delta\in GL_\alpha(F)$ appears $l$ times. It is enough to consider representatives under this action. If $\gamma$ is not zero then there are two cases to consider. 
The first case is when $\psi_\gamma(x(p))$ is not zero for $x(p)=I_{2n+k(r-1)}+pe_{j_1, j_2}$ where $j_1=(l-1)\alpha+1$ and 
$j_2=2n+k(r-1)-(l-1)\alpha-1$. The second case is when $\psi_\gamma(x(p))$ is not zero for $x(p)=I_{2n+k(r-1)}+pe'_{j_1, j_2}$ where $j_1=(l-1)\alpha+1$ and 
$j_2=2n+k(r-1)-l\alpha+1$.

We start with the first case. Let $w_0$ be the Weyl group element $w_0=\text{diag}(w,I_{2(n+kr_1-l\alpha)},w^*)$ in $Sp_{2n+k(r-1)}(F)$. Here $w$ in $GL_{l\alpha}(F)$ is the matrix
whose only nonzero entries are $1$ in positions $(j,(j-1)\alpha +1)$ for $1\le j\le l$ and positions $(j_1,j_2)$ for  $j_1=l+(j-1)\alpha+a-j+1$ and 
$j_2=(j-1)\alpha +a+1$ with $1\le a\le \alpha-1$ and $1\le j\le l$.

Conjugating the integral in equation \eqref{cusp19} by $w_0$, we obtain the integral
\begin{equation}\label{cusp20}
\int\limits_{[X]}
\int\limits_{[U_l']}
\int\limits_{[\mathbb{G}_a]}
\theta_{2n+k(r-1)}^{(r)}(uu'_{1,n+kr_1-l}(0,m)x)\,\psi_{U_l}(u)\,\psi(\beta m)\,dm\,du\,dx
\end{equation}
as an inner integration to the integral \eqref{cusp19}. Here $\beta\in F^*$, and $U_l'$ is the subgroup of $U^0_{1,l+1,n+kr_1-l-1}$ defined as follows. 
An element $u=(u_{i,j})\in U_l'$ if $u_{i,j}=0$ for all $1\le i\le l-1$ and $l+1\le j\le l+i(\alpha-1)$. As for the group $X$, it consists of all matrices of the form
\begin{equation}\label{mat6}
x=\begin{pmatrix} I_l&&&&\\ y&I_{l(\alpha-1)}&&&\\ &&I&&\\ &&&I_{l(\alpha-1)}&\\
&&&y^*&I_l\end{pmatrix}
\end{equation}
where $y\in \Mat_{l(\alpha-1)\times l}$ satisfies the conditions $y_{i,j}=0$ for all $(i,j)$ such that
$1\le j\le l$ and $(j-1)(\alpha-1)+1\le i\le l(\alpha-1)$.
Thus, to prove that the contribution from this case to the sum in equation \eqref{cusp19} is zero, it is enough to prove that the integral \eqref{cusp20} is zero for all choices of data. 

We claim that, by means of root exchange, the vanishing of the integral \eqref{cusp20} follows from the vanishing of 
\begin{equation}\label{cusp21}
\int\limits_{[U_l]}
\int\limits_{[\mathbb{G}_a]}
\theta_{2n+k(r-1)}^{(r)}(uu'_{1,n+kr_1-l}(0,m))\,\psi_{U_l}(u)\psi(\beta m)\,dm\,du\,dx
\end{equation}
for all choices of data. To prove this claim, for $1\le j\le l-1$ we consider the two families of unipotent subgroups. Let $V_j$ denote the unipotent subgroup of $U_l$ defined by
\begin{equation}\label{cusp22}\notag
V_j=\{ x_i(p_i)=I_{2n+k(r-1)}+p_ie'_{j,l+i},\ \ :\ 1\le i\le j(\alpha-1)\},
\end{equation}
and let $X_j$ denote the unipotent subgroup of $X$ defined by matrices of the form \eqref{mat6} with all entries of $y$ equal to zero outside the $(j+1)$-st column.
We proceed inductively, starting with $j=1$. We expand the integral \eqref{cusp15} along the group $[V_j]$, 
and then we perform root exchange with the group $X_j$.
After the root exchange corresponding to $j=l-1$, we obtain the integral \eqref{cusp21} as an inner integration to integral \eqref{cusp20}. 

However,  the integral \eqref{cusp21} corresponds to the unipotent orbit $((2l+2)1^{2(n+kr_1-l-1)})$. Since $(r-1)/2< l$,  this unipotent orbit is not 
comparable with the unipotent orbit ${\mathcal O}_c(\Theta_{2(n+kr_1)}^{(r)})$ in Conjecture~\ref{conj1}. Hence Proposition~\ref{theta01} implies
that \eqref{cusp21} is zero for all choices of data. This completes the first case of representative in \eqref{cusp19}. 

Next we consider the second case. For this case we use a different Weyl group element in $Sp_{2(n+kr_1)}(F)$, which we denote by $w$. 
To define $w$,  we set $w_{i,(i-1)\alpha+1}=w_{l+i,2n+k(r-1)-(l-i+1)\alpha}=1$ for $1\le i\le l$. Then we extend it in an arbitrary way to a Weyl group element 
of $Sp_{2(n+kr_1)}(F)$. Conjugating the corresponding integral in the expansion \eqref{cusp19}, we obtain integral \eqref{cusp13} with $m=2l-2$ as an inner integration. 
Applying Lemma \ref{lem2}, the result follows.
\end{proof}

With this preparation, we now establish the tower property for this theta lift.  Fix $k\geq3$.
\begin{theorem}\label{th2}
Suppose that the representation $\sigma_{n,m}^{(r)}$
is zero (i.e.\ every function in this space is identically zero) for all $m$, $1\le m< k$, $m\equiv k \bmod 2$. Then $\sigma_{n,k}^{(r)}$ is a cuspidal representation.
\end{theorem}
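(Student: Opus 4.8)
The plan is to establish cuspidality of $\sigma_{n,k}^{(r)}$ by a standard but technically involved argument: compute the constant term of a lift function $f(h)$ along the unipotent radical $V$ of an arbitrary maximal parabolic of $SO_k$, and show that it vanishes modulo the hypothesis that the lower towers $\sigma_{n,m}^{(r)}$ (with $m<k$, $m\equiv k\bmod 2$) are zero. The maximal parabolics of $SO_k$ have Levi factors of the form $GL_j\times SO_{k-2j}$ for $1\le j\le [k/2]$ (with a slight variant when $k$ is even and $j=k/2$), and it suffices to treat each such $V_j$. So the first step is to write out $\int_{[V_j]}f(vh)\,dv$ using the definition \eqref{lift2}, interchanging the $v$-integration with the inner integrals over $[Sp_{2n}^{(\kappa r)}]$ and $[U_{k,r_1,n}]$.

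Next I would analyze how $\iota_1$ and $\iota_2$ carry $V_j\subset SO_k$ into the two big symplectic groups. Under $\iota_1$ (the tensor product embedding), $V_j$ maps into a unipotent subgroup of $Sp_{2nk}$; under $\iota_2$ it maps into a unipotent subgroup of $Sp_{2n+k(r-1)}$ that sits inside the unipotent radical attached to the orbit $((r-1)^k 1^{2n})$. The key will be to combine the $V_j$-integration with the $U_{k,r_1,n}$-integration and reorganize the total unipotent integration against the total character as a Fourier coefficient of the product $\theta_{2nk}^{(2),\psi}\cdot\theta_{2n+k(r-1)}^{(r)}$ attached to a specific unipotent orbit of the ambient group. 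Here the two theta functions play complementary roles: for the factor $\theta_{2n+k(r-1)}^{(r)}$ one invokes Proposition~\ref{theta01} (the smallness: no Fourier coefficient on orbits not $\le{\mathcal O}_c$), which will kill the ``new'' (non-constant) Fourier terms that arise when we expand along abelian subquotients — exactly as in Lemmas~\ref{lem1}, \ref{lem2}, \ref{lem3}. The Lemmas \ref{lem1}--\ref{lem3} are pre-built precisely to handle these expansions: they are invariance/vanishing statements for Fourier coefficients of $\theta_{2n+k(r-1)}^{(r)}$ with respect to the unipotent groups that show up after conjugating by suitable Weyl elements. For the factor $\theta_{2nk}^{(2),\psi}$, one uses the explicit description of the classical theta function on the double cover (its Fourier expansion / the action of the Weil representation, as recalled from \cite{G-R-S1}) together with the cuspidality of $\pi^{(\kappa r)}$: the $g$-integration over $[Sp_{2n}^{(\kappa r)}]$ against $\overline{\varphi^{(\kappa r)}(i(g))}$ will produce a period of the cusp form which, on the non-constant pieces, either vanishes by cuspidality or, on the surviving piece, collapses the ambient symplectic group so that the remaining inner integral is recognized as (a translate of) a lift function $f'$ for the lower tower $\sigma_{n,k-2j}^{(r)}$ (or $\sigma_{n,k-2j'}^{(r)}$ for appropriate $j'$), hence zero by hypothesis.

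Concretely, the steps in order: (1) reduce to showing $\int_{[V_j]}f(vh)\,dv\equiv0$ for each $j$; (2) insert \eqref{lift2}, rearrange the order of integration; (3) conjugate by an appropriate Weyl group element of $Sp_{2n+k(r-1)}$ to put the combined $V_j\cdot U_{k,r_1,n}$ integration into a standard form, tracking how it acts on both theta arguments via $\iota_1,\iota_2$ and $\ell$; (4) perform a sequence of Fourier expansions along abelian subquotients, using root exchange (\cite[Section 7.1]{G-R-S4}); at each stage the non-trivial characters give Fourier coefficients of $\theta_{2n+k(r-1)}^{(r)}$ attached to orbits incomparable to or larger than ${\mathcal O}_c$, which vanish by Proposition~\ref{theta01} (this is where Lemmas~\ref{lem1}--\ref{lem3} are applied); (5) on the one surviving term, use the explicit theta function $\theta_{2nk}^{(2),\psi}$ and cuspidality of $\pi^{(\kappa r)}$ to either get $0$ directly or to identify the term as a lift integral \eqref{lift2} for $SO_{k-2j}^{(r)}$ in place of $SO_k^{(r)}$, i.e. a function in $\sigma_{n,k-2j}^{(r)}$, which is zero by hypothesis (note the parity is preserved, $k-2j\equiv k\bmod 2$, which is exactly why the hypothesis is phrased with $m\equiv k\bmod2$); (6) conclude all constant terms vanish, hence $\sigma_{n,k}^{(r)}$ is cuspidal.

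The main obstacle I expect is step (3)–(4): correctly organizing the double unipotent integration (the orthogonal-group parabolic radical $V_j$ together with the Heisenberg-type group $U_{k,r_1,n}$, linked to the first theta factor via the homomorphism $\ell$) into a shape where the smallness of $\Theta_{2n+k(r-1)}^{(r)}$ can be applied cleanly, and bookkeeping the cocycles/covers so that the Weyl conjugations and root exchanges are legitimate on the covering groups (using the splitting conventions and \eqref{iota2}, \eqref{block-iota2} from Section~\ref{basic}). A secondary delicate point is making precise, in step (5), the claim that the residual term is genuinely a lift function for the smaller orthogonal group — this requires matching the second theta factor $\theta_{2nk}^{(2),\psi}$ restricted along $\iota_1$ to the orthogonal tower structure, i.e. seeing $SO_{k-2j}$ sitting inside $SO_k$ compatibly with the tensor embedding, and checking the theta function on $Sp_{2n(k-2j)}^{(2)}$ that appears is the correct one. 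Once these identifications are in hand the argument closes by induction on $k$ through the tower.
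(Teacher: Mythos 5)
Your plan is correct and follows essentially the same route as the paper: reduce to constant terms along the radicals $N_\alpha$, unfold $\theta_{2nk}^{(2),\psi}$ to produce the smaller double-cover theta, conjugate by a Weyl element and perform root exchanges so that Proposition~\ref{theta01} (via Lemmas~\ref{lem1}--\ref{lem3}) kills the nontrivial Fourier terms, use cuspidality of $\pi^{(\kappa r)}$ to eliminate the remaining nonconstant orbits, and identify the surviving term (through the constant term of $\Theta^{(r)}$ along the parabolic with Levi $GL_{\alpha(r-1)}\times Sp_{2n+\beta(r-1)}$) as a function in $\sigma_{n,k-2\alpha}^{(r)}$, which vanishes by hypothesis. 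The only cosmetic difference is your closing remark about induction on $k$, which is unnecessary since the hypothesis already covers all lower towers of the same parity.
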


\begin{proof}
To prove the cuspidality of the lift, we need to show that the constant terms of the representation $\sigma_{n,k}^{(r)}$ along any unipotent radical of a
standard maximal parabolic subgroup of $SO_k$ are zero for all choices of data. 
These are the subgroups $N_\alpha$, $1\le \alpha\le [k/2]$, consisting of all matrices in $SO_k$ of the form
\begin{equation}\notag 
\begin{pmatrix} I_\alpha&*&*\\ &I_\beta&*\\ &&I_\alpha\end{pmatrix},\qquad  \beta=k-2\alpha.
\end{equation}
Thus, with $f(h)$ given by \eqref{lift2}, we need to prove that the integral
\begin{equation}\label{cusp30}
\int\limits_{[N_\alpha]}
f(n_\alpha h)\,dn_\alpha
\end{equation}
is zero for all choices of data.  

We start by unfolding the theta series $\theta_{2nk}^{(2),\psi}$, which is expressed as a sum over $\xi\in F^{nk}$. We choose
the polarization $\xi=(\xi_1,\xi_2)$, where $\xi_1\in F^{2n\alpha}$ and $\xi_2\in F^{n\beta}$. Write $\xi_1=(\xi_{1,1},\xi_{1,2},\ldots,\xi_{1,\alpha})$ where $\xi_{1,i}\in F^{2n}$. 
The action of $Sp_{2n}$ is given by multiplication on the right on each $\xi_{1,i}$. Now the projection $l(u)$ that appears in \eqref{lift2} depends only on the 
values of  $l(u_{k,n}'(Y,Z))$ (for the notation, see \eqref{mat2}). 
Write
\begin{equation}\label{mat7}
Y=\begin{pmatrix}  y_\alpha\\ y_\beta\\ y_\alpha'\end{pmatrix},\qquad
Z=\begin{pmatrix} z_\alpha&*&*\\ z_1&z_\beta&*\\ z_2&*&*\end{pmatrix}\in \Mat_{k}^0.
\end{equation}
Here $y_\alpha,\ y_\alpha'\in \Mat_{\alpha\times 2n}$ and $y_\beta\in \Mat_{\beta\times 2n}$. 
Also, $z_\alpha\in \Mat_\alpha$, $z_\beta\in \Mat_\beta^0$, $z_1\in \Mat_{\beta\times\alpha}$ and $z_2\in \Mat_\alpha^0$. 
Using the formulas for the Weil representation $\omega_\psi$ (see for example \cite[Section 1, part 3]{G-R-S1}), we obtain
\begin{equation}\label{cusp31}
\theta_{2nk}^{(2),\psi}(l(u_{k,n}'(Y,Z))\iota_1(n_\alpha,g))=\sum_{\xi_1,\xi_2}\omega_\psi((\xi_1,0)l(u_{k,n}'(Y,Z))\iota_1(n_\alpha,g))\phi(0,\xi_2)
\end{equation}
Here  $\phi$ is a Schwartz function of ${\A}^{nk}$.  

From the definition of the homomorphism $l$, we have
$l(u_{k,n}'(Y_{\xi_1},0))=(\xi_1,0)$ where $Y_{\xi_1}=\left(\begin{smallmatrix} 0\\ \xi_1\end{smallmatrix}\right)$. 
In the integral \eqref{cusp30} the variables $y_\alpha'$, defined in \eqref{mat7}, are integrated over the quotient $\Mat_{\alpha\times 2n}(F)\backslash \Mat_{\alpha\times 2n}({\A})$. 
Hence, after conjugating the element  $u_{k,n}'(Y_{\xi_1},0)$ to the right we may combine 
summation with integration. It follows that to prove the vanishing of the integral \eqref{cusp30} for all choices of data, it suffices to prove the vanishing of the integral
\begin{multline}\label{cusp32}
\int\limits_{[Sp_{2n}]}
\int
\overline{\varphi^{(\kappa r)}(i(g))}
\theta_{2n(k-2\alpha)}^{(2),\psi}(l_0(u_{k,n}'(Y,Z))\iota_1(1,g))\\
\theta_{2n+k(r-1)}^{(r)}(u_{k,n}'(Y,Z)u\,\iota_2(n_\alpha,g))\,\psi_{U_{k,r_1,n}}(u)\,
\psi_\alpha(Z)\,dY\,dZ\,du\,dn_\alpha \,dg
\end{multline}
for all choices of data, where the notation is as follows.

 In the coordinates of \eqref{mat7}, $l_0$ is defined by
$l_0(u_{k,n}'(Y,Z))=(y_\beta,\text{tr}(z_\beta T_{k-2\alpha}))\in {\mathcal H}_{2n\beta+1}$ 
(where the rows of $y_\beta$ are listed with the bottom row first, similarly to the definition of the map $l$ in Section~\ref{basic}).
The variable 
$Y$ is integrated over $[Y_0]$, 
where $Y_0$ is the subgroup of $\Mat_{k\times 2n}$ given by all matrices $Y$ as in \eqref{mat7} with $y_\alpha'=0$, and  $Z$ is 
integrated over $[\Mat_{k\times k}^0]$. 
Also, $u$ is integrated over $[U_{k,r_1,n}^0]$, 
and $n_\alpha$ is integrated over $[N_\alpha]$. 
Using the coordinates of equation \eqref{mat7},  the character $\psi_\alpha$ is defined as $\psi_\alpha(Z)=\psi(\text{tr}(z_\alpha))$. 
Also, notice that (when $\beta>0$) the theta series appearing in  \eqref{cusp32} is defined on the double cover of $Sp_{2n(k-2\alpha)}({\A})$. This follows since after the above collapsing of summation
and integration we are left with the summation over $\xi_2\in F^{2n(k-2\alpha)}$. 
The rightmost argument of the theta series is $\iota_1(1,g)$ since  $\omega_\psi(\iota_1(n_\alpha,g))\phi(0,\xi_2)=
\omega_\psi((\iota_1(1,g))\phi(0,\xi_2)$.  If $k_0=\beta=0$ and $k=2\alpha$ then the theta function $\theta_{2n(k-2\alpha)}^{(2),\psi}$ in \eqref{cusp32} is omitted.

The next step is to define a certain Weyl group element $w\in Sp_{2n+k(r-1)}(F)$, which we will then use to conjugate the various groups. This Weyl element is of the form
\begin{equation}\label{wmat1}
w=\begin{pmatrix} w_1&&w_2\\ &I_{2n}&\\ w_3&&w_4\end{pmatrix}
\end{equation}
where $w_i\in \Mat_{kr_1}$. To specify it, it is enough to specify the matrices $w_1$ and $w_2$. 
These matrices will be block matrices whose only nonzero entries are the  identity  matrices $I_\alpha$ and $I_\beta$. 
To describe the location of each such identity block, it is enough to specify the location in each $w_j$, $j=1,2$, of its first $1$ on the diagonal. 
The matrix $w_1$ has an identity matrix $I_\alpha$ whose first $1$ is at position $(\alpha(i-1)+1,k(i-1)+1)$ for $1\le i\le r_1$.
Since $w$ has only one non-zero entry in each row, note that
this implies that the first $\alpha r_1$ rows of $w_2$ are all zeros. Next, the matrix $w_2$ has an identity matrix $I_\alpha$ whose first $1$ is at position
 $(\alpha r_1+\alpha(i-1)+1,k(i-1)+1)$ for $1\le i\le r_1$.
This then implies that the corresponding rows in the matrix $w_1$ are all zero. 
Finally, in $w_1$ there is a block of $I_\beta$ whose first 1 is at position $(\alpha(r-1)+\beta(i-1)+1,k(i-1)+\alpha+1)$ for $1\le i\le r_1$.

For example, when $r=7$, we have 
\begin{equation}\label{wmat2}\notag
w_1=\begin{pmatrix}
I_\alpha&0_\beta&0_\alpha&0_\alpha&0_\beta&0_\alpha&0_\alpha&0_\beta&0_\alpha\\ 0_\alpha&&&I_\alpha&&&&&\\ 0_\alpha&&&&&&I_\alpha&&\\ 0_\alpha&&&&&&&&\\ 
0_\alpha&&&&&&&&\\ 0_\alpha&&&&&&&&\\ 0_\beta&I_\beta&&&&&&&\\ 
0_\beta&&&&I_\beta&&&&\\ 0_\beta&&&&&&&I_\beta&
\end{pmatrix},\  
w_2=\begin{pmatrix}
0_\alpha&0_\beta&0_\alpha&0_\alpha&0_\beta&0_\alpha&0_\alpha&0_\beta&0_\alpha\\ 0_\alpha&&&&&&&&\\ 0_\alpha&&&&&&&&\\ I_\alpha&&&&&&&&\\ 
0_\alpha&&&I_\alpha&&&&&\\ 0_\alpha&&&&&&I_\alpha&&\\ 0_\beta&&&&&&&&\\ 
0_\beta&&&&&&&&\\ 0_\beta&&&&&&&&
\end{pmatrix}
\end{equation}
where all the blank entries are zero.

Before conjugating by $w$, we perform a certain root exchange. To do that, let $L_{\alpha,\beta}$ denote the unipotent subgroup of $GL_k$ consisting of all matrices of the form
\begin{equation}\label{em0}
l=\begin{pmatrix} I_\alpha&a&b\\ &I_\beta&c\\ &&I_\alpha\end{pmatrix}
\end{equation}
and let $L_0$ denote any unipotent subgroup of $L_{\alpha,\beta}$ such $L_{\alpha,\beta}=L_0 N_\alpha$. 
For example, one may choose the group of all matrices $l$ as above such that $c=0$ and such that $bJ_\alpha$ 
is strictly upper triangular. Consider the direct sum $L_{\alpha,\beta}\oplus\ldots\oplus L_{\alpha,\beta}\oplus L_0$ where $L_{\alpha,\beta}$ appears $r_1-1$ times. 
We embed this group inside $Sp_{2n+k(r-1)}$ as 
\begin{equation}\label{em1}
\text{diag}(l_1,l_2,\ldots,l_{r_1-1},l_0,I_{2n},l_0^*,l_{r_1-1}^*,\ldots,l_1^*).
\end{equation}
We will also need to consider the subgroup of $\Mat_k$ which consists of all matrices of the form
\begin{equation}\notag 
l^-=\begin{pmatrix} 0_\alpha&&\\ a_1&0_\beta&\\ b_1&c_1&0_\alpha\end{pmatrix}
\end{equation}
We denote this group by $L^-$. 

Now we carry out root exchange with the embedded copies of the groups $L_{\alpha,\beta}$ and $L^-$. We begin with $L_{\alpha,\beta}$ is embedded in the first component of 
$L_{\alpha,\beta}\oplus\ldots\oplus L_{\alpha,\beta}\oplus L_0$
and then inside $Sp_{2n+k(r-1)}$ as in \eqref{em1}. 
Since $L_{\alpha,\beta}$ is not abelian, this root exchange needs to be carried out in stages, as follows. 
First, we exchange the unipotent elements which are in the center of $L_{\alpha,\beta}$, i.e.\ the (abelian) group of all 
matrices $l$ in \eqref{em0} such that $a=c=0$, with the group of all matrices  $u_{k,r_1,n}^1(l^-)$  (see \eqref{mat4}) such that $l^-\in L^-$ with $a_1=c_1=0$. 
After doing this, we proceed with the abelian group consisting of 
all matrices $l\in L_{\alpha,\beta}$ with $b=c=0$, and then with the abelian group of all matrices with $a=b=0$.  
Next, we exchange $L_{\alpha,\beta}$ embedded in the second component of $L_{\alpha,\beta}\oplus\ldots\oplus L_{\alpha,\beta}\oplus L_0$
and then inside $Sp_{2n+k(r-1)}$ as in \eqref{em1}.
For this group we use the copy of $L^-$ embedded in $Sp_{2n+k(r-1)}$ as $l^-\mapsto u_{k,r_1,n}^2(l^-)$. We continue this process for all $i$ with $1\le i\le r_1-1$, 
exchanging the $i$-th copy of $L_{\alpha,\beta}$ inside $L_{\alpha,\beta}\oplus\ldots\oplus L_{\alpha,\beta}\oplus L_0$
with a subgroup of $u_{k,r_1,n}^i(L^-)$.  Then, we perform root exchange corresponding to $l^-\in L^-$, embedded in  $U_{k,r_1,n}^0$ as all matrices 
$l^-\mapsto u_{k,r_1,n}^1(l^-)$
We exchange this group with the group of all matrices $u_{k,n}'(0,Z)$ where
\begin{equation}\label{em3}
Z=\begin{pmatrix} &&\\ z_1&&\\ z_2&z_1^*&\end{pmatrix}.
\end{equation}

After performing these root exchanges, we conjugate by the Weyl element $w$ defined in \eqref{wmat1}. 
Thus, to prove that integral \eqref{cusp32} is zero for all choice of data, we conclude that it is enough to prove that the integral
\begin{multline}\label{cusp33}
\int
\overline{\varphi^{(\kappa r)}(i(g))}\,
\theta_{2n(k-2\alpha)}^{(2),\psi}(l_0(u)\iota_1(1,g))\,\psi_{U_{\beta,r_1,n}}(u)\,
\psi_{V_{\alpha,r-1}}(v)\\ \theta_{2n+k(r-1)}^{(r)}
\left (\begin{pmatrix} I_{\alpha(r-1)}&C&D\\ &I&C^*\\ &&I_{\alpha(r-1)}\end{pmatrix}\begin{pmatrix} v&&\\ &u&\\ &&v^*\end{pmatrix}
\begin{pmatrix}  I_{\alpha(r-1)}&&\\ A&I&\\ B&A^*&I_{\alpha(r-1)}\end{pmatrix}   \iota_2(1,g)\right )\,d(\mydots)
\end{multline}
is zero for all choices of data.  Here $I$ denotes the identity matrix of size $2n+(k-2\alpha)(r-1)$.
The description of the variables in this integral and the domain of integration is elaborate, and we give it now.

First, $g$ is integrated as in the integral \eqref{cusp32}. The variable $u$ is integrated over the quotient
$[U_{\beta,r_1,n}]$, 
where $U_{\beta,r_1,n}\subseteq Sp_{2n+\beta(r-1)}$  is embedded inside $Sp_{2n+k(r-1)}$ by the map $u\to \text{diag}(I_{\alpha(r-1)},u,I_{\alpha(r-1)})$. 
Also, $V_{\alpha,r-1}$ denotes the subgroup of 
$U_{\alpha,r-1,n+\beta r_1}$ generated by all matrices of the form $\prod_{i=1}^{r-2}u_{\alpha,r-1,n+\beta r_1}^i(X_i)$ with $X_i\in \Mat_\alpha$;  $V_{\alpha,r-1}$ is isomorphic
to a unipotent subgroup of $GL_{\alpha(r-1)}$. The character
 $\psi_{V_{\alpha,r-1}}$ is the restriction of $\psi_{U_{\alpha,r-1,n+\beta r_1}}$ to $V_{\alpha,r-1}$. The variable $v$ is integrated over $[V_{\alpha,r-1}]$. 

Next we define the regions over which the variables $A,B,C$ and $D$ (each matrices of a certain size) are integrated. 
These are given by considering them as block matrices and imposing various conditions.
We start with the variable $D$. Consider the subgroup $D_0\subset \Mat_{\alpha(r-1)}^0$ consisting of block matrices with blocks
of size $\alpha$ such that each $(i,j)$ block with $j<i$ is the zero matrix $0_\alpha$. Thus, for example if $r-1=4$, then $D_0$ consists of all matrices of the form
\begin{equation}\notag 
D=\begin{pmatrix} X_1&X_2&X_3&X_4\\ 0_\alpha&X_5&X_6&X_3^*\\ 
0_\alpha&0_\alpha&X_5^*&X_2^*\\ 0_\alpha&0_\alpha&0_\alpha&X_1^*\end{pmatrix}
\ \ \ X_1,X_2,X_3,X_5\in \Mat_\alpha;\ \ X_4,X_6\in \Mat_\alpha^0.
\end{equation}
With these notations $D$ is integrated over $[D_0]$. 
Similarly, let $B_0\subset \Mat_{\alpha(r-1)}^0$ consist of block matrices with blocks of size $\alpha$ such that each $(i,j)$ block with $j\le i+1$ is the zero matrix $0_\alpha$. 
For example if $r-1=5$, then $B_0$ consists of all matrices of the form
\begin{equation}\notag 
B=\begin{pmatrix} 0_\alpha&0_\alpha&X_1&X_2&X_3\\ 0_\alpha&0_\alpha&0_\alpha&X_4&X_2^*\\ 
0_\alpha&0_\alpha&0_\alpha&0_\alpha&X_1^*\\ 0_\alpha&0_\alpha&0_\alpha&0_\alpha&0_\alpha \\ 0_\alpha&0_\alpha&0_\alpha&0_\alpha&0_\alpha\end{pmatrix}
\ \ \ X_1,X_2\in \Mat_\alpha;\ \ X_3,X_4\in \Mat_\alpha^0.
\end{equation}
Then $B$ is integrated over $[B_0]$.  

As for $C$, define a subgroup $C_0\subset \Mat_{\alpha(r-1)\times (2n+2\beta r_1)}$ as follows.  Write 
\begin{equation}\notag 
C=\begin{pmatrix} C_1&C_2&C_3\\ 0&0&C_4\end{pmatrix},
\end{equation}
with $C_2\in \Mat_{\alpha r_1\times 2n}$ and $C_1,C_3,C_4\in \Mat_{\alpha r_1\times \beta r_1}$. 
Then $C_0$ is the subgroup of such matrices such that  $C_1$ and $C_4$ may be written as block matrices, with blocks of size $\alpha\times\beta$, such that 
each $(i,j)$ block with $j<i$ is $0_{\alpha\times\beta}$. For example, when $r_1=3$, then both $C_1$ and $C_4$ are matrices of the form  
\begin{equation}\notag 
\begin{pmatrix} X_1&X_2&X_3\\ 0&X_4&X_5\\ 0&0&X_6\end{pmatrix}\ \ \ \ 
X_i\in \Mat_{\alpha\times\beta},
\end{equation}
where the zeroes indicate the zero matrices of the corresponding sizes.  
 The variable $C$ is integrated over $[C_0]$. 

Finally, let $A_0$ denote the subgroup of $\Mat_{(2n+\beta(r-1))\times \alpha(r-1)}$ consisting of matrices of the form
\begin{equation}\notag 
A=\begin{pmatrix} 0&A_1&A_2\\ 0&0&A_3\\ 0&0&A_4\\ 0&0&0&\end{pmatrix}\ \ \ \ 
\begin{pmatrix} A_2\\ A_3\end{pmatrix}\in \Mat_{(2n+\beta r_1)\times \alpha(r_1-1)},
\end{equation}
where the first column has width $2\alpha$, the last row has height $\beta$, and
$A_1,A_4\in\Mat_{\beta(r_1-1)\times \alpha(r_1-1)}$ satisfy the following conditions. 
First viewing $A_1$ as a block matrix with blocks of size $\alpha\times \beta$, all $(i,j)$ blocks with $j<i$ are zero matrices. 
Second, viewing $A_4$ similarly, all $(i,j)$ blocks with $j\le i$ are the zero matrix. For example, for $r_1-1=4$ we have 
\begin{equation}\label{block6}
A_1=\begin{pmatrix} X_1&X_2&X_3&X_4\\ 0&X_5&X_6&X_7\\ 
0&0&X_8&X_9\\ 0&0&0&X_{10}\end{pmatrix},\ \ 
A_4=\begin{pmatrix} 0&Y_1&Y_2&Y_3\\ 0&0&Y_4&Y_5\\ 
0&0&0&Y_6\\ 0&0&0&0\end{pmatrix}
\ \ \ X_i, Y_j\in \Mat_{\beta\times \alpha}.
\end{equation}
Then $A$ is integrated over $[A_0]$. 

Our goal is to prove that the integral \eqref{cusp33} is zero for all choices of data. To do that we start with a sequence of root exchanges, and a repeated application of 
Lemma~\ref{lem3}.
Recall that in terms of blocks of height $\alpha$, the matrices $C$ and $D$ each have $r-1$ rows. 
The integral runs over the first row of blocks of each, while for the second row, we have variables of integration 
everywhere except for the blocks in position $(2,1)$, a block which is $0_{\alpha\times \beta}$ for $C$ and $0_{\alpha}$ for $D$, and so on for the remaining rows.

We first perform the root exchange using 
$(1,3)$ block in $B$ and then using the $(1,3)$ block of $A$. Note that 
 there is a compatibility in the block sizes with the $(2,1)$ blocks of the matrices $D$ and $C$, resp. 
 Indeed, the $(1,3)$ block of $B$  is of size $\alpha\times \alpha$ which is exactly the size of the block in the $(2,1)$ position  of $D$. 
 Similarly, the $(1,3)$ block of $A$ is of size $\beta\times \alpha$ which is exactly what is needed for 
 root exchange with the block matrix at the $(2,1)$ position of the $C$, which is of size $\alpha\times \beta$. We 
 continue this process with the third row of the block matrices $C$ and $D$. For these two matrices the $(3,1)$ and $(3,2)$ blocks are each zero. 
 We integrate over the $(1,4)$ and $(2,4)$ block matrices in $A$ and $B$, which allows
 us to perform root exchange. Repeating this process with all rows up to and including the $r_1$-th row, we obtain the integral \eqref{cusp18} with $l=(r+1)/2$ as inner integration. 
 Applying Lemma \ref{lem3}, we get the invariance of this integral along the adelic points of the  unipotent subgroup $u'_{\alpha,n+kr_1-l\alpha}(0,Z)$ with $l=(r+1)/2$. 
 Notice that this subgroup is realized as the block matrices of size $\alpha$ which are in position $((r+1)/2,(r-1)/2)$ in $D$. 
 With this invariance, we can then proceed with root exchange of the blocks of $C$ and $D$ with the corresponding blocks of  $A$ and $B$. 
 
 We conclude from this root exchange that the integral \eqref{cusp33} is zero for all choices of data if the integral 
\begin{multline}\label{cusp34}
\int
\overline{\varphi^{(\kappa r)}(i(g)}
\theta_{2n(k-2\alpha)}^{(2),\psi}(l_0(u)\iota_1(1,g))\,\psi_{U_{\beta,r_1,n}}(u)\,
\psi_{U_{\alpha,r-1,n+\beta r_1}}(u_1)\\
\theta_{2n+k(r-1)}^{(r)}(u'_{\alpha,n+\beta r_1}(Y_{3,r_1},Z)u_1
\begin{pmatrix} I_{\alpha(r-1)}&&\\ &u&\\ &&I_{\alpha(r-1)}\end{pmatrix}
\iota_2(1,g))\,du_1\,du\,dY_{3,r_1}\,dZ\,dg
\end{multline}
is zero for all choices of data.
Here $u$ and $g$ are integrated as in \eqref{cusp33}, $u_1$ is integrated over the quotient $[U_{\alpha,r-1,n+\beta r_1}]$, and 
$Z$ is integrated over $[\Mat_{\alpha}^0]$.  Also, 
$Y_{3,r_1}$ is integrated over $[\Mat_{\alpha\times\beta}]$, 
where the notation $Y_{3,r_1}$ is explained as follows.
Recall that $u'_{\alpha,n+\beta r_1}(Y,Z)$ was defined in \eqref{mat1}. Here we have $Y\in \Mat_{\alpha\times 2(n+\beta r_1)}$.
Write $Y=\begin{pmatrix} Y_1&Y_2&Y_3\end{pmatrix}$, where $Y_1, Y_3\in \Mat_{\alpha\times \beta r_1}$ and $Y_2\in \Mat_{\alpha\times 2n}$, and let
$$Y_3=\begin{pmatrix} Y_{3,1}&Y_{3,2}&\ldots &Y_{3,r_1}\end{pmatrix}\ \ \ \ Y_{3,i}\in \Mat_{\alpha\times \beta};\ 1\le i\le r_1$$ 
Then in \eqref{cusp34} instead of writing  $Y=\begin{pmatrix} 0_{\alpha\times 2(n+\beta(r_1-1))}&Y_{3,r_1}\end{pmatrix}$ we write  $Y_{3,r_1}$ for short.

Next we expand the integrand in \eqref{cusp34} along the group $Y_3=\begin{pmatrix} 0&0&\ldots&0&Y_{3,r_1-1}&0\end{pmatrix}$ where 
$Y_{3,r_1-1}\in [\Mat_{\alpha\times\beta}]$.
The nontrivial characters in this expansion correspond to matrices of size $\beta\times\alpha$ of fixed positive rank. We will prove that all nontrivial terms are zero. 
Consider the group $GL_\alpha(F)\times GL_\beta(F)$ embedded in $Sp_{2n+k(r-1)}(F)$ by 
$$(h_1,h_2)\to \text{diag}(h_1,\ldots,h_1,h_2\ldots,h_2,I_{2n},h_2^*,\ldots,h_2^*,h_1^*,\ldots,h_1^*),$$
where $h_1\in GL_\alpha(F)$ appears $r-1$ times and $h_2\in GL_\beta(F)$ appears $r_1$ times. 
This group may be used to collect terms of this expansion in the usual way.
As representatives with respect to it, we choose the characters
$$\psi_A(Y_{3,r_1-1})=\psi(\text{tr}(Y_{3,r_1-1}A))\ \ \ \ A=\begin{pmatrix} I_m&\\ &&0\end{pmatrix}\in \Mat_{\beta\times\alpha}.$$
The contribution to the integral \eqref{cusp34} from a nontrivial orbit is
\begin{multline}\label{cusp35}
\int
\overline{\varphi^{(\kappa r)}(i(g))}\,
\theta_{2n(k-2\alpha)}^{(2),\psi}(l_0(u)\iota_1(1,g))\,\psi_{U_{\beta,r_1,n}}(u)\,
\psi_{U_{\alpha,r-1,n+\beta r_1}}(u_1)\,\psi_A(Y_{3,r_1-1})\\
\theta_{2n+k(r-1)}^{(r)}(u'_{\alpha,n+\beta r_1}(Y_{3,r_1-1},Y_{3,r_1},Z)u_1
\begin{pmatrix} I_{\alpha(r-1)}&&\\ &u&\\ &&I_{\alpha(r-1)}\end{pmatrix}
\iota_2(1,g))\,du_1\,du\,dY_{3,r_1-1}\,dZ\,dg.
\end{multline}

To prove that this integral is zero, let $w$ be a Weyl element of $Sp_{2n+k(r-1)}(F)$ which has entry 1 at positions
$(i,\alpha(i-1)+1)$, $1\le i\le r-1$, and at positions
 $(r,(\alpha+\beta)(r-1)+2(n-\beta)+1)$ and  $(r+1,(\alpha+\beta)(r-1)+2n-\beta+1)$.  (We do not specify it in rows $r+2$ to $n+kr_1$.)
Using the automorphicity of $\theta_{2n+k(r-1)}^{(r)}$ we can conjugate the argument of this function by $w$. 
After doing so, we obtain the integral \eqref{cusp13} with $m=r$ as inner integration, and then from Lemma \ref{lem2} it follows that  \eqref{cusp35} is zero for all choices of data. 
We conclude that the only nonzero contribution  to the integral \eqref{cusp34} from the expansion along  $Y_{3,r_1-1}$ is from the constant term. 

Continuing this process, next with $Y_{3,r_1-2}$, we obtain by induction that \eqref{cusp34} is equal to the integral
\begin{multline}\label{cusp36}
\int
\overline{\varphi^{(\kappa r)}(i(g))}
\theta_{2n(k-2\alpha)}^{(2),\psi}(l_0(u)\iota_1(1,g))\,\psi_{U_{\beta,r_1,n}}(u)\,
\psi_{U_{\alpha,r-1,n+\beta r_1}}(u_1)\\
\theta_{2n+k(r-1)}^{(r)}(u'_{\alpha,n+\beta r_1}(Y_{3},Z)u_1
\begin{pmatrix} I_{\alpha(r-1)}&&\\ &u&\\ &&I_{\alpha(r-1)}\end{pmatrix}
\iota_2(1,g))\,du_1\,du\,dY_{3}\,dZ\,dg,
\end{multline}
where $Y_3$ is integrated over $[\Mat_{\alpha\times \beta r_1}]$. 

Next, we expand \eqref{cusp36} along $Y_2$ (as defined above, following \eqref{cusp34}) over the quotient $[\Mat_{\alpha\times 2n}]$.
We will show that all nontrivial characters contribute zero to this expansion. Note
that the group $GL_\alpha(F)\times Sp_{2n}(F)$, embedded in $Sp_{2n+k(r-1)}(F)$ by the map
$$(h,g)\to \text{diag}(h,\ldots,h,I_{\beta r_1},g,I_{\beta r_1},h^*,\ldots,h^*)\ \ \ h\in GL_\alpha(F);\ \ g\in Sp_{2n}(F),$$
acts, so we may consider the characters modulo this action.
There are two types of representatives for the  nontrivial orbits, as follows.

The first is given by characters of the form
$$\psi_A(Y_2)=\psi(\text{tr}(Y_2A))\ \ \ \ \ A=\begin{pmatrix} 1&0&*\\ 0_{2(n-1)\times 1}& 0_{2(n-1)\times 1}&*\\ 0&1&*\end{pmatrix}\in 
\Mat_{2n\times\alpha}(F).$$
These appear in the expansion only if $\alpha\ge 2$. Let $w$ denote a Weyl element of $Sp_{2n+k(r-1)}(F)$ with entry 1 in
positions $(i,\alpha(i-1)+1)$ and $(r+i,(\alpha+\beta)(r-1)+2n+i\alpha -1)$ for $1\le i\le r-1$, and in position $(r,\alpha(r-1)+\beta r_1+1)$. Then, arguing 
as above with \eqref{cusp35}, by conjugating by this $w$, we obtain the integral \eqref{cusp13} with $m=2r-2$ as inner integration. 
Notice that since $\alpha\ge 2$, then $k\ge 4$, and we do have $r\le 2r-2\le n+kr_1-1$. Applying Lemma \ref{lem2}, we see that the 
contribution to the expansion  from these representatives is zero.

The second type of representative is given by the characters
\begin{equation}\label{cha1}
\psi_A(Y_2)=\psi(\text{tr}(Y_2A))\ \ \ \ \ A=\begin{pmatrix} I_a&\\ &\end{pmatrix}\in \Mat_{2n\times\alpha}(F);\ \ \ \ 1\le a\le n,\alpha.
\end{equation}
(If $a>n$ then the orbit is represented by a character already considered above.) 
The stabilizer inside $Sp_{2n}$ contains the unipotent radical of the maximal parabolic subgroup of $Sp_{2n}$ whose 
Levi part is $GL_a\times Sp_{2(n-a)}$. We denote this unipotent group by $R_a$. It is embedded inside $Sp_{2n+k(r-1)}$ as all matrices of the form
\begin{equation}\label{matt1}
\begin{pmatrix} I_{kr_1}&&&&\\ &I_a&B&C&\\ &&I_{2(n-a)}&B^*&\\ &&&I_a&\\
&&&&I_{kr_1}\end{pmatrix}\ \ B\in \Mat_{a\times 2(n-a)};\ \ \ C\in \Mat_{a\times a}^0.
\end{equation}
The claim is that, as a function of $g\in Sp_{2n}({\A})$, the integral
\begin{multline}\label{cusp37}
\int\theta_{2n(k-2\alpha)}^{(2),\psi}(l_0(u)\iota_1(1,g))\,\psi_{U_{\beta,r_1,n}}(u)\,
\psi_{U_{\alpha,r-1,n+\beta r_1}}(u_1)\,\psi_A(Y)\\
\theta_{2n+k(r-1)}^{(r)}(u'_{\alpha,n+\beta r_1}(Y,Z)u_1
\begin{pmatrix} I_{\alpha(r-1)}&&\\ &u&\\ &&I_{\alpha(r-1)}\end{pmatrix}
\iota_2(1,g))\,du_1\,du\,dY\,dZ
\end{multline}
is left invariant under $R_a({\A})$. Here $Y$ runs over all matrices 
of the form $\begin{pmatrix} 0&Y_2&Y_3\end{pmatrix}$ with $Y_2$ integrated over $[\Mat_{\alpha\times 2n}]$
and $Y_3$ 
integrated as in \eqref{cusp36}. The character $\psi_A(Y)$ is the trivial extension of $\psi_A(Y_2)$ defined above. 

To prove the claim we first unfold the theta series $\theta_{2n(k-2\alpha)}^{(2),\psi}$, similarly to \eqref{cusp31}. After collapsing summation and integration, we obtain
\begin{multline}\label{cusp38}
\int\sum_{\xi\in F^m}\omega_\psi(l_0(u)\iota_1(1,g))\phi(0,\xi)\,\psi_{U_{\beta,r_1,n}}(u)\,
\psi_{U_{\alpha,r-1,n+\beta r_1}}(u_1)\,\psi_A(Y)\\
\theta_{2n+k(r-1)}^{(r)}(u'_{\alpha,n+\beta r_1}(Y,Z)u_1
\begin{pmatrix} I_{\alpha(r-1)}&&\\ &u&\\ &&I_{\alpha(r-1)}\end{pmatrix}
\iota_2(1,g))\,du_1\,du\,dY\,dZ.
\end{multline}
Here $m=0$ if $k$ is even, and $m=n-a$ if $k$ is odd. The integration in $u$ is over the quotient 
$U_{\beta,r_1,n}'(F)\backslash U_{\beta,r_1,n}({\A})$ 
where $U_{\beta,r_1,n}'$ is a certain subgroup of $U_{\beta,r_1,n}$ (whose definition we omit as it plays no role in the sequel). 
It follows from the action of the Weil representation that the function $\sum_{\xi\in F^m}\omega_\psi(\iota_1(1,g))\phi(0,\xi)$ is left invariant under $R_a(\A)$. 
After moving the matrix $r_a\in R_a({\A})$ to the right and changing variables, it is enough to prove that the function of $g$ 
\begin{equation}\label{cusp39}
\int \theta_{2n+k(r-1)}^{(r)}(u'_{\alpha,n+\beta r_1}(Y,Z)u_1
\iota_2(1,g))\,\psi_{U_{\alpha,r-1,n+\beta r_1}}(u_1)\,\psi_A(Y)\,du_1\,dY\,dZ
\end{equation}
is left invariant under $r_a\in R_a({\A})$.  Here, the variables $u_1, Y$ and $Z$ are integrated as in \eqref{cusp38}. 

To prove this invariance, we argue as in the proof of Lemma \ref{lem3}. First expand \eqref{cusp39} along the abelian group $\Mat_{a\times a}^0$, embedded in $Sp_{2n+k(r-1)}$ as the
group of all matrices of the form \eqref{matt1} with $B=0$. Next observe that only the trivial character contributes. Indeed, arguing similarly to the proof of Lemma \ref{lem3}, 
we obtain as inner integration either a Fourier coefficient which corresponds to a unipotent orbit which is not related to ${\mathcal O}(\Theta_{2n+k(r-1)}^{(r)})$, or 
an integral of the form of \eqref{cusp13} with $r\le m$. Then  
expand along the group $\Mat_{a\times 2(n-a)}$ embedded inside $Sp_{2n+k(r-1)}$ as all matrices of the form \eqref{matt1} with $C=0$. Similar arguments imply that only the 
constant term gives a non-zero contribution. From this it follows that integral \eqref{cusp39}, and hence integral \eqref{cusp37}, is left invariant under  $r_a\in R_a({\A})$. 
Using this invariance property in the expansion of
\eqref{cusp36} along $Y_2$, when we consider the contribution from the characters \eqref{cha1}, we obtain (after measure factorization) the integral
$$
\int\limits_{[R_a]}
\overline{\varphi^{(\kappa r)}(i(r_a)i(g))}\,dr_a$$
as inner integration. By cuspidality this integral is zero for all choices of data.

We deduce that the integral \eqref{cusp36} is equal to
\begin{multline}\label{cusp40}
\int
\overline{\varphi^{(\kappa r)}(i(g))}
\theta_{2n(k-2\alpha)}^{(2),\psi}(l_0(u)\iota_1(1,g))\,\psi_{U_{\beta,r_1,n}}(u)\,
\psi_{U_{\alpha,r-1,n+\beta r_1}}(u_1)\\
\theta_{2n+k(r-1)}^{(r)}(u'_{\alpha,n+\beta r_1}(Y_2,Y_{3},Z)u_1
\begin{pmatrix} I_{\alpha(r-1)}&&\\ &u&\\ &&I_{\alpha(r-1)}\end{pmatrix}
\iota_2(1,g))\,du_1\,du\,dY_2\,dY_{3}\,dZ\,dg.
\end{multline}
The final expansion we need to consider is the expansion of \eqref{cusp40} along the quotient 
$Y_1\in [\Mat_{\alpha\times \beta r_1}]$. Here $Y_1$ is embedded inside $Sp_{2n+k(r-1)}$ as the group of all matrices 
$u'_{\alpha,n+\beta r_1}(Y,0)$ with $Y=\begin{pmatrix} Y_1&0&0\end{pmatrix}$ (with $Y$ as described following \eqref{cusp34}). 
Similarly to the expansion of \eqref{cusp34} along the subgroup $Y_3$, we see that all nontrivial characters for $Y_1$ give zero.
Thus \eqref{cusp40} is equal to 
\begin{multline}\label{cusp41}
\int
\overline{\varphi^{(\kappa r)}(i(g))}
\theta_{2n(k-2\alpha)}^{(2),\psi}(l_0(u)\iota_1(1,g))\,\psi_{U_{\beta,r_1,n}}(u)\,
\psi_{U_{\alpha,r-1,n+\beta r_1}}(u_1)\\
\theta_{2n+k(r-1)}^{(r)}(u'_{\alpha,n+\beta r_1}(Y,Z)u_1
\begin{pmatrix} I_{\alpha(r-1)}&&\\ &u&\\ &&I_{\alpha(r-1)}\end{pmatrix}
\iota_2(1,g))\,du_1\,du\,dY\,dZ\,dg
\end{multline}
where $Y$ is now integrated over 
$[\Mat_{\alpha\times 2n+\beta r_1}]$.

To complete the proof of the Theorem we need to prove that \eqref{cusp41} is zero for all choices of data. 
Recall that $u_1$ is integrated over $[U_{\alpha,r-1,n+\beta r_1}^0]$. 
Combining this integration with the integration over $u'_{\alpha,n+\beta r_1}(Y,Z)$, we obtain as inner integration the  
constant term of the function $\theta_{2n+k(r-1)}^{(r)}$ along the unipotent radical of the maximal parabolic subgroup of $Sp_{2n+k(r-1)}$ whose 
Levi part is $GL_{\alpha(r-1)}\times Sp_{2n+\beta(r-1)}$. This means that we can use Proposition 1 in \cite{F-G2} to obtain as inner integration a function 
which is realized in the space of the representation $\sigma_{n,\beta}^{(r)}=\sigma_{n,k-2\alpha}^{(r)}$. Indeed, this follows since $u$ in 
 \eqref{cusp41} is integrated over $[U_{\beta,r_1,n}]$. 
By our assumption this representation is zero. This completes the proof of the Theorem.
\end{proof}

In fact it follows from the above proof that we have the following Corollary.
\begin{corollary}\label{cor1}
Suppose that the representation $\sigma_{n,k}^{(r)}$ is zero. Then, for all $1\le m\le [k/2]$,  the representation $\sigma_{n,k-2m}^{(r)}$ is zero.
\end{corollary}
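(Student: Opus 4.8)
The idea is to read the computation in the proof of Theorem~\ref{th2} backwards. Fix $\alpha$ with $1\le\alpha\le[k/2]$ and put $\beta=k-2\alpha$. For an arbitrary vector $f$ in the space of $\sigma^{(r)}_{n,k}$, realized by the integral \eqref{lift2}, the argument proving Theorem~\ref{th2} shows that the constant term $\int_{[N_\alpha]}f(n_\alpha h)\,dn_\alpha$ equals the integral \eqref{cusp41}. The first point I would stress is that this chain of identities is completely unconditional: it uses only the unfolding of $\theta^{(2),\psi}_{2nk}$ as in \eqref{cusp31}, the Weyl conjugations and root exchanges, Lemmas~\ref{lem1}--\ref{lem3}, the cuspidality of $\pi^{(\kappa r)}$, and Proposition~\ref{theta01}; the hypothesis on lower lifts is invoked in the proof of Theorem~\ref{th2} only at its very last step.

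Next I would carry out the inner integration in \eqref{cusp41} exactly as at the end of the proof of Theorem~\ref{th2}. Combining the integration over $[U^0_{\alpha,r-1,n+\beta r_1}]$ with the integration over the variables $u'_{\alpha,n+\beta r_1}(Y,Z)$ produces the constant term of $\theta^{(r)}_{2n+k(r-1)}$ along the unipotent radical of the maximal parabolic of $Sp_{2n+k(r-1)}$ with Levi $GL_{\alpha(r-1)}\times Sp_{2n+\beta(r-1)}$, and by the global analogue of Proposition~\ref{constant-term-theta} (Proposition~1 of \cite{F-G2}; equivalently the surjectivity of the relevant intertwining operator, as in \cite{O-S}) this realizes $\Theta^{(r)}_{GL_{\alpha(r-1)}}\otimes\Theta^{(r)}_{2n+\beta(r-1)}$ surjectively. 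After this integration, \eqref{cusp41} takes precisely the shape of the defining integral \eqref{lift2} for $\sigma^{(r)}_{n,\beta}=\sigma^{(r)}_{n,k-2\alpha}$: the integration over $g\in[Sp^{(\kappa r)}_{2n}]$ against $\overline{\varphi^{(\kappa r)}(i(g))}$ is untouched; the factor $\theta^{(2),\psi}_{2n(k-2\alpha)}(l_0(u)\iota_1(1,g))$ together with the integration of $u$ over $[U_{\beta,r_1,n}]$ against $\psi_{U_{\beta,r_1,n}}$ is exactly the Fourier coefficient for the orbit $((r-1)^\beta 1^{2n})$; and the surviving $\theta^{(r)}_{2n+\beta(r-1)}$ is the $Sp$-component of the constant term above. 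Moreover, as the data $(\varphi^{(\kappa r)},\theta^{(2),\psi}_{2nk},\theta^{(r)}_{2n+k(r-1)})$ of the original integral \eqref{lift2} ranges over its full domain, so does the reconstructed data $(\varphi^{(\kappa r)},\theta^{(2),\psi}_{2n\beta},\theta^{(r)}_{2n+\beta(r-1)})$: $\varphi^{(\kappa r)}$ is unchanged, the Schwartz datum of $\theta^{(2),\psi}_{2n\beta}$ is the restriction of the free Schwartz datum of $\theta^{(2),\psi}_{2nk}$, and $\theta^{(r)}_{2n+\beta(r-1)}$ ranges over all of $\Theta^{(r)}_{2n+\beta(r-1)}$ by the surjectivity just noted.

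Putting this together, the functions $h\mapsto\int_{[N_\alpha]}f(n_\alpha h)\,dn_\alpha$, as $f$ varies over $\sigma^{(r)}_{n,k}$, span $\sigma^{(r)}_{n,k-2\alpha}$. Hence if $\sigma^{(r)}_{n,k}=0$ then every such constant term vanishes identically, so $\sigma^{(r)}_{n,k-2\alpha}=0$; letting $\alpha$ run over $1\le\alpha\le[k/2]$ and writing $m=\alpha$ yields the Corollary. The only step needing genuine care---and the one I would spell out---is the surjectivity bookkeeping in the second paragraph, namely the verification that \eqref{cusp41} exhausts the generating set of $\sigma^{(r)}_{n,k-2\alpha}$ rather than merely lying in its space; this rests on the surjectivity in Proposition~\ref{constant-term-theta} and Proposition~1 of \cite{F-G2}, together with the elementary surjectivity of the theta-series unfolding at the level of Schwartz data.
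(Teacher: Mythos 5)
Your argument is essentially the paper's own proof: the paper deduces Corollary~\ref{cor1} precisely by running the proof of Theorem~\ref{th2} in reverse, the key points being (as you say) that the hypothesis on the lower lifts enters only at the very last step, that all intermediate reductions are reversible, and that the surjectivity of the constant term of $\Theta^{(r)}_{2n+k(r-1)}$ (Proposition~1 of \cite{F-G2}, or the intertwining-operator surjectivity in \cite{O-S}) guarantees that \eqref{cusp41} exhausts the generating integrals \eqref{lift2} of $\sigma^{(r)}_{n,k-2\alpha}$. One small caution: the root-exchange steps are not pointwise identities with fixed data but equivalences of the form ``one expression vanishes for all choices of data if and only if the other does'' (\cite{G-R-S4}, Corollary~7.1), so you should not claim that the constant term literally equals \eqref{cusp41} or that the constant terms span $\sigma^{(r)}_{n,k-2\alpha}$; the vanishing equivalence is all the Corollary requires, and it is exactly how the paper phrases the reversal.
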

\begin{proof}
Suppose that $\sigma_{n,k}^{(r)}$ is zero. Then for $1\le \alpha\le k_1$ the constant term of this representation along the unipotent subgroup $N_\alpha$ of $SO_k$ is zero for all choices of data. 
(The group $N_\alpha$ was defined before \eqref{cusp30}.) In Theorem \ref{th2} we proved that if the representation $\sigma_{n,k-2\alpha}^{(r)}$ is zero, then the constant term given by 
\eqref{cusp30} is zero for all choices of data. In fact the converse is also true.  The key point is that the process of root exchange shows that one expression vanishes
for all choices of data if and only if another does
(see \cite{G-R-S4}, Corollary 7.1).   Accordingly, the steps of the above proof may be reversed.  We omit the detail.
\end{proof}

\section{The Unramified Correspondence}\label{the-unramified-corr}

Our goal in this section is to establish that the theta correspondence developed here is functorial on unramified principal series.  In this section we let $F$ be a nonarchimedean
local field containing $\mu_r$, and to simplify the notation we write a group or covering group for its $F$-points (so we write $Sp_{2n}$ instead of $Sp_{2n}(F)$, etc.).  
Recall that given an unramified character of the torus of a
symplectic or orthogonal group, one may define its associated principal series by parabolic induction from the Borel subgroup.  
One may do this for covering groups as well, but this requires a two-step process.
Let $G$ be the ($F$-points) of one of the groups considered above and $B=TU$ its standard Borel subgroup, with $T$ be its maximal split torus.  
Let $\widetilde{G}$ be one of the covering groups under consideration here, and if $H$ is any subgroup of $G$ let $\widetilde{H}$ denote its inverse image in $\widetilde{G}$.
Then $\widetilde{T}$ is generally not abelian, 
but it is a two-step nilpotent group. If $\chi$ is a genuine character of the center $Z(\widetilde{T})$ of $\widetilde{T}$, then
genuine principal series representations are constructed by first extending $\chi$ to a character of a maximal abelian group $A$ of $\widetilde{T}$ containing $Z(\widetilde{T})$, next
inducing this extension from $A$ to $\widetilde{T}$, then extending trivially on $N$ to obtain a representation of $\widetilde{B}$, 
and finally taking the normalized induction from $\widetilde{B}$ to $\widetilde{G}$.  By an analogue of the Stone-Von Neumann Theorem, this representation
is determined by its central character $\chi$, and we write the representation $\pi(\chi)$.  
For more details see for example \cite{McN} (analogously to the general linear group, in \cite{K-P}, Section I.1) or \cite{Gao2}.
For characters in general position these induced representations are irreducible.

Let $\chi$ be the unramified character of the maximal torus $T$ of $Sp_{2n}$ given by
$$\chi\left(\diag(t_1,\dots,t_n,t_n^{-1},\dots,t_1^{-1})\right)=\prod_{i=1}^n\chi_i(t_i),$$ where the $\chi_i$ are unramified quasicharacters of $F^\times$.
This character determines a genuine character $\chi$ of $Z(\widetilde{T})$, and we form the 
unramified principal series $\pi_{Sp_{2n}}^{(\kappa r)}(\chi)$ as outlined above. 
If $\kappa=2$ then this requires a Weil factor, as in \cite{B-F-H}, equation (1.10).
The functions in this space are genuine with respect to the character $\epsilon'$  specified in Section~\ref{basic2}.
Similarly, let  $\xi$ be an unramified character of the maximal torus of $SO_k$, 
$$\xi\left(\diag(t_1,\dots,t_{k_1},I_{k-2k_1},t_{k_1}^{-1},\dots,t_1^{-1})\right)=\prod_{i=1}^{k_1}\xi_i(t_i),$$
where $k_1=[k/2]$, the $\xi_j$ are quasicharacters, and the maximal torus has a 1 in the middle entry if $k$ is odd, and form the unramified principal series $\pi_{SO_k}^{(r)}(\xi)$
 (for more details see \cite{B-F-G1}, Section 6).
We consider the case that $k_1=n$, so that each representation has the same number of Satake parameters, and study the theta correspondence between these representations. 
(One may describe matters slightly more generally using the notion of a Brylinski-Deligne extension, as in Gao \cite{Gao1};
see for example Leslie \cite{Leslie}, Section 3.  This does not change the discussion below in any significant way.)
 
Let $\psi$ be an
additive character of $F$ which is trivial on the ring of integers of $F$ but on no larger fractional ideal, and let
$(\omega_\psi,V_{\omega_\psi})$ be the Weil representation of $Sp^{(2)}_{2nk}$ with respect to $\psi$.  
Let $(\Theta^{(r)},V_{\Theta^{(r)}})$ be the local theta representation of $Sp_{2n+k(r-1)}^{(r)}$.  (We shall write $\Theta_{2n+k(r-1)}^{(r)}$ when we want to indicate the size of the symplectic group.)
Form the vector space $V_{\omega_\psi}\otimes V_{\Theta^{(r)}}$.
This space admits a representation $\omega_{\psi}\otimes \Theta_{}^{(r)}$ of $(Sp_{2nk}^{(2)}\rtimes{\mathcal H}_{2kn+1})\times Sp_{2n+k(r-1)}^{(r)}$.  We restrict this to a representation
of $SO_k^{(r)}\times Sp_{2n}^{(\kappa r)}$ using the same embeddings $\iota_1$, $\iota_2$ as in the global integral.  More precisely, 
the action of $(h,g)\in SO_k^{(r)}\times Sp_{2n}^{(\kappa r)}$
on a vector $v_1\otimes v_2$, $v_1\in V_{\omega_\psi}$, $v_2\in V_{\Theta^{(r)}}$ is given by
$$(\omega_{\psi}\otimes \Theta_{}^{(r)})(h,g)\cdot(v_1\otimes v_2)=\omega_\psi(\iota_1^{(2)}(p^{(1)}(h),p^{(\kappa)}(g)))v_1\otimes \Theta^{(r)}(\iota_2^{(r)}(h,p^{(r)}(g)))v_2.$$
Note that the subgroup $\{((1,\zeta),(1,\zeta^{-1}))\mid \zeta\in \mu_r\}$ acts trivially.
Let $J_{U_{k,r_1,n},\psi_{U_{k,r_1,n}}}$ or, for notational convenience, simply $J_{U_{k,r_1,n},\psi}$ be the twisted Jacquet functor with respect to the character $\psi_{U_{k,r_1,n}}$ of 
$U_{k,r_1,n}\subseteq Sp_{2n+k(r-1)}$, acting on the first factor in the tensor product by the map $l:U_{k,r_1,n}\to {\mathcal H}_{2kn+1}$ and acting on the second factor by the theta
representation. (This functor is defined near the end of Section~\ref{introduction} above.) This is the local analogue of the integral \eqref{lift1}. Similarly to the treatment of the global situation, the group 
$SO_k^{(r)}\times Sp_{2n}^{(\kappa r)}$ with the above action stabilizes the group $U_{k,r_1,n}$ and character, so the Jacquet module $J_{U_{k,r_1,n},\psi}(\omega_{\psi}\otimes \Theta_{}^{(r)})$ 
also affords a representation of $SO_k^{(r)}\times Sp_{2n}^{(\kappa r)}$. 
Let
$$\Hom_{SO_k^{(r)}\times Sp_{2n}^{(\kappa r)}}(J_{U_{k,r_1,n},\psi}(\omega_{\psi}\otimes \Theta_{}^{(r)}),\pi_{SO_k}^{(r)}(\xi)\otimes \pi_{Sp_{2n}}^{(\kappa r)}(\chi))$$ 
denote the space of
$(SO_k^{(r)}\times Sp_{2n}^{(\kappa r)})$-equivariant maps 
from $J_{U_{k,r_1,n},\psi}(\omega_{\psi}\otimes \Theta_{}^{(r)})$ to  $ \pi_{SO_k}^{(r)}(\xi)\otimes \pi_{Sp_{2n}}^{(\kappa r)}(\chi).$
Then we shall show
\begin{theorem}\label{unramified}
Let $k=2n$ or $2n+1$ and suppose that the characters $\chi$ and $\xi$ are in general position.  If 
\begin{equation}\label{nonvan-hypoth-thm}
\Hom_{SO_k^{(r)}\times Sp_{2n}^{(\kappa r)}}(J_{U_{k,r_1,n},\psi}(\omega_{\psi}\otimes \Theta_{}^{(r)}),\pi_{SO_k}^{(r)}(\xi)\otimes \pi_{Sp_{2n}}^{(\kappa r)}(\chi))\neq 0
\end{equation} 
then after applying a Weyl group element, $\xi_i=\chi_i$ for each $i$.  
\end{theorem}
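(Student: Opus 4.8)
The plan is to compute the twisted Jacquet module $J_{U_{k,r_1,n},\psi}(\omega_\psi \otimes \Theta^{(r)})$ as a representation of $SO_k^{(r)} \times Sp_{2n}^{(\kappa r)}$ explicitly enough to read off its Jordan-H\"older factors, then match them against $\pi_{SO_k}^{(r)}(\xi)$ and $\pi_{Sp_{2n}}^{(\kappa r)}(\chi)$. First I would unfold the Weil representation $\omega_\psi$ on $Sp_{2nk}^{(2)}$ in the Schr\"odinger model, using the polarization compatible with $\iota_1$, so that the action of $l(U_{k,r_1,n})$ via the Heisenberg homomorphism becomes the action of a Heisenberg-type unipotent group on Schwartz functions. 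The twisting character $\psi_{U_{k,r_1,n}}$ together with the character by which $U_{k,r_1,n}$ acts on $\omega_\psi$ pins down a specific subspace of the Schwartz space and forces the remaining integrations (over the ``$X_i$'' coordinates) to restrict the support; this is the local analogue of the collapsing of summation and integration carried out in the proof of Theorem~\ref{th2}. Simultaneously, I would apply Proposition~\ref{constant-term-theta} repeatedly to compute the relevant Jacquet modules of $\Theta^{(r)}$ along the unipotent radicals appearing after conjugation by Weyl elements, expressing them as tensor products $\Theta^{(r)}_{GL_\bullet} \otimes \Theta^{(r)}_{Sp_\bullet}$.

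The heart of the argument is then a geometric lemma / Bruhat-theory computation: after the Jacquet functor, the resulting module has a filtration indexed by $SO_k^{(r)} \times Sp_{2n}^{(\kappa r)}$-orbits on some flag-type variety (coming from the double coset decomposition relating $U_{k,r_1,n}$, the Heisenberg group, and the tori), and on each piece the action is an induced representation whose inducing data involves the Satake parameters $\chi_i$ of $\Theta^{(r)}$ (its ``leading'' exponents, controlled by the orbit $\mathcal{O}_c$ via Theorem~\ref{theta05}/Proposition~\ref{theta01}) paired against the variables $h$ and $g$. Concretely, for $k = 2n$ the tensor-product structure should force the datum on the $SO_k$-factor to be (a twist of) the unramified character $\xi$ built from the same $t_i$ that appear in the $Sp_{2n}$-datum $\chi$; the case $k = 2n+1$ is the same with one extra trivial entry absorbed into the odd orthogonal group. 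Once the module is written as a sum of principal series $\pi_{SO_k}^{(r)}(\xi') \otimes \pi_{Sp_{2n}}^{(\kappa r)}(\chi')$ (plus possibly smaller pieces), the nonvanishing of the Hom space in \eqref{nonvan-hypoth-thm}, combined with irreducibility of $\pi_{SO_k}^{(r)}(\xi)$ and $\pi_{Sp_{2n}}^{(\kappa r)}(\chi)$ in general position and the fact that an irreducible full induced representation of a covering group is determined up to Weyl action by its inducing character, forces $(\xi,\chi)$ to equal one of these $(\xi',\chi')$, hence $\xi_i = \chi_i$ after a Weyl element. I would use the genericity hypothesis to discard any non-generic constituents that the unfolding might produce (appealing to Lemma~\ref{theta-generic} to control which exponents of $\Theta^{(r)}$ actually survive, so that only the ``main term'' contributes).

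The main obstacle I expect is the bookkeeping of the double-coset geometry and the precise identification of which exponents of $\Theta^{(r)}_{2n+k(r-1)}$ appear after taking the $\psi_{U_{k,r_1,n}}$-twisted Jacquet module: unlike the classical theta lift, here the ``small'' representation $\Theta^{(r)}$ contributes its own Satake parameters, and one must show that the twisted Jacquet functor kills all the orbits/exponents except the one corresponding to $\mathcal{O}_c(\Theta^{(r)}_{2n+k(r-1)}) = ((r-1)^k 1^{2n})$, which is exactly the orbit of the Fourier coefficient in \eqref{lift1}. This uses Proposition~\ref{theta01} (the first part of Theorem~\ref{theta05}, which is unconditional) to kill the larger and incomparable orbits, so that the only surviving term produces the expected leading exponent and hence the matching of Satake parameters. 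A secondary technical point is keeping track of the various coboundary twists $\delta_k$, $\beta_k$ and the Weil factor relating $Sp_{2n}^{(2r)}$ to $Sp_{2n}^{(r)} \times Sp_{2n}^{(2)}$ so that the genuine central characters $\epsilon$, $\epsilon'$ line up; this is routine given the careful setup in Section~\ref{basic} but needs to be checked so that the Hom space is computed for the correct genuine representations.
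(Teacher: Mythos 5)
Your outline identifies the right ingredients (Schr\"odinger-model unfolding, Proposition~\ref{constant-term-theta}, the Bernstein--Zelevinsky Geometrical Lemma, the smallness of $\Theta^{(r)}$), but the step that constitutes the actual content of the theorem is asserted rather than proved. Saying that ``the tensor-product structure should force the datum on the $SO_k$-factor to be (a twist of) the unramified character $\xi$ built from the same $t_i$'' is exactly the conclusion one must establish, and nothing in your sketch produces it: you never pin down which exponents of $\Theta^{(r)}_{2n+k(r-1)}$ survive the $\psi_{U_{k,r_1,n}}$-twisted Jacquet functor, nor why the surviving piece carries \emph{no} extra unramified twist. In the paper this is done by an induction on $n$: one realizes $\pi_{SO_k}^{(r)}(\xi)$ and $\pi_{Sp_{2n}}^{(\kappa r)}(\chi)$ as induced from the maximal parabolics with Levi $GL_1\times SO_{k-2}$ and $GL_1\times Sp_{2n-2}$, applies Frobenius reciprocity, and then runs a long, explicitly ordered sequence of local root exchanges, conjugations by specific Weyl elements, and local Fourier expansions (each non-constant term killed by Proposition~\ref{theta-local} or by general position), ending with the untwisted Jacquet module along $U_{r,1,n+kr_1-r}$ and the Whittaker functional of $\Theta^{(r)}_{GL_r}$. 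The identity $\chi_1=\xi_1$ (resp.\ $\chi_1=\xi_1^{-1}$, coming from a second, genuinely distinct branch of the expansion at the root $E'_{r-1,r}$) then follows only because the accumulated modular characters and Jacobians, a power of $|a|$ equal to $r_1(2n+(k-1)(r-1))$, exactly cancel against the known central character of the Kazhdan--Patterson theta representation of $GL_r^{(r)}$. This cancellation, and the reduction of the Hom-space nonvanishing from $(n,k)$ to $(n-1,k-2)$, are the heart of the proof; in your proposal they are relegated to ``bookkeeping,'' so the argument as written does not close.

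There is also a concrete misidentification that suggests the mechanism is not in place: you write that the twisted Jacquet functor should kill everything except the orbit $\mathcal{O}_c(\Theta^{(r)}_{2n+k(r-1)})=((r-1)^k1^{2n})$. In fact $((r-1)^k1^{2n})$ is the orbit of the Fourier coefficient defining the lift in \eqref{lift1}, while by Conjecture~\ref{conj1} one has $\mathcal{O}_c(\Theta^{(r)}_{2n+k(r-1)})=(r^{2n})$ for $k=2n$ and $(r^{2n}(r-1))$ for $k=2n+1$, which is strictly larger; Proposition~\ref{theta-local} is used only to kill Fourier expansion terms attached to orbits greater than or unrelated to $\mathcal{O}_c$, not to isolate a single ``surviving orbit.'' Relatedly, note that the theorem needs only the unconditional vanishing direction (Proposition~\ref{theta01}/\ref{theta-local}), whereas a Jordan--H\"older computation of the full module of the kind you propose would implicitly require nonvanishing information that is not available. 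I would encourage you to restructure the argument as an induction on $n$ with the explicit exponent computation, rather than attempting a one-shot decomposition of $J_{U_{k,r_1,n},\psi}(\omega_\psi\otimes\Theta^{(r)})$.
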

Here we recall that the Weyl group of $Sp_{2n}$ acts on $\chi$ by permuting the indices and by inverting each $\chi_i$; the Weyl group of $SO_k$ acts on the $\xi_i$,
by a similar action (if $k$ is odd) and by permutations and by inversions of an even number of quasicharacters (if $k$ is even).  
Hence the conclusion is equivalent to the assertion
that the sets $\{\chi_i^{\pm1}\}$ and $\{\xi_j^{\pm1}\}$, which are each of cardinality $2n$ as the characters are in general position, are in bijection.
Also, if $k$ is even then we recall that the  principal series representations of $SO_k^{(r)}$ attached to $(\xi_1,\dots,\xi_{n-1},\xi_n)$ and $(\xi_1,\dots,\xi_{n-1},\xi_n^{-1})$
are isomorphic under the outer automorphism of $SO_k$  that is conjugation by
$$\begin{pmatrix}I_{k/2-1}&&&\\&0&1&\\&1&0&&\\&&&I_{k/2-1}\end{pmatrix}.$$

The proof of Theorem~\ref{unramified} requires the local version of the smallness of the representation $\Theta^{(r)}_{2l}$.  
Recall that  ${\mathcal O}_c(\Theta_{2l}^{(r)})$ is defined in Conjecture~\ref{conj1} above.
As in Section~\ref{conjtheta}, for a given unipotent orbit $\mathcal{O}$ let $U_{\mathcal{O}}$ denote the upper unipotent subgroup attached to this orbit.
Then the proof of Proposition~\ref{theta01} gives the following result.
 
\begin{proposition}\label{theta-local}
Suppose that ${\mathcal O}$ is a unipotent orbit which is greater than ${\mathcal O}_c(\Theta_{2l}^{(r)})$ or that is not related to
${\mathcal O}_c(\Theta_{2l}^{(r)})$. Then for any character $\psi_{\mathcal{O}}$ attached to $\mathcal{O}$, the 
twisted Jacquet module $J_{U_{\mathcal{O}},\psi_{\mathcal{O}}}(\Theta_{2l}^{(r)})=0$.
\end{proposition}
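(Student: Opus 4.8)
The plan is to observe that Proposition~\ref{theta-local} is nothing but the local statement that was in fact established inside the proof of Proposition~\ref{theta01}: that argument was carried out by reducing the global vanishing to the vanishing of certain twisted Jacquet modules at a good unramified finite place, and the nonarchimedean field $F$ of this section plays the role of such a place for the unramified representation $\Theta_{2l}^{(r)}$. So I would write the proof by transcribing that local content. First, using \cite{G-R-S3}, Lemmas~2.4 and~2.6 --- whose proofs, as noted when proving Proposition~\ref{theta01}, apply to the metaplectic covers and at the level of twisted Jacquet modules over a nonarchimedean local field --- I would reduce the claim for an arbitrary orbit $\mathcal{O}$ larger than or incomparable with $\mathcal{O}_c(\Theta_{2l}^{(r)})$ to the single family $\mathcal{O}_k=((2k)1^{2l-2k})$ with $2k>r$. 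It then suffices to show $J_{U_{\mathcal{O}_k},\psi_{\mathcal{O}_k}}(\Theta_{2l}^{(r)})=0$ for all such $k$.

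Second, I would reproduce the descent and induction on $l$ from the proof of Proposition~\ref{theta01}. Let $m$ be the largest integer with the relevant twisted Jacquet module of $\Theta_{2l}^{(r)}$ on $((2m)1^{2l-2m})$ nonzero; then $m<l$, for otherwise $\Theta_{2l}^{(r)}$ would be generic, contradicting Lemma~\ref{theta-generic} (whose hypotheses hold here, since $2l>r$ and $\Theta_{2l}^{(r)}$ is unramified). One shows, exactly as before, that each Fourier--Jacobi module $FJ_{m,\alpha}(\Theta_{2l}^{(r)})$ is a genuine unramified representation of $Sp_{2l-2m}^{(2r)}$ all of whose untwisted Jacquet functors vanish --- this step uses the Geometrical Lemma of \cite{B-Z}, the description of the constant terms of $\Theta_{2l}^{(r)}$ along maximal parabolics in Proposition~\ref{constant-term-theta}, and the inductive hypothesis --- hence is supercuspidal, hence, since $l>m$, is zero by Lemma~\ref{supercuspidal-lemma}. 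By the local form of \cite{G-R-S3}, Lemma~1.1, namely Leslie \cite{Leslie}, Corollary~6.4, the vanishing of all $FJ_{m,\alpha}(\Theta_{2l}^{(r)})$ forces $J_{U_{\mathcal{O}_k},\psi_{\mathcal{O}_k}}(\Theta_{2l}^{(r)})=0$ for $2k>r$; combined with the first step this is the Proposition. The base case $l=2$ is dealt with exactly as in the proof of Proposition~\ref{theta01}.

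The only genuinely new bookkeeping, and the step I would be most careful about, is checking that the field $F$ fixed in this section may indeed serve as the ``good unramified place'' on which the proof of Proposition~\ref{theta01} rested: one should either assume $\mathrm{ord}_F(2r)=0$ outright, or verify that the appeals to Lemmas~\ref{theta-generic} and~\ref{supercuspidal-lemma} and to the descent lemmas of \cite{G-R-S3} are valid for the specific unramified genuine representations over $F$ occurring here, and that the Fourier--Jacobi vanishing criterion and the supercuspidality argument really are assertions about twisted Jacquet modules over $F$ rather than merely about global Fourier coefficients. Since all of these points are already faced and settled in the proof of Proposition~\ref{theta01}, once this verification is recorded there is nothing further to do, and I expect the entire proof to consist of this verification together with the observation that the local argument above is self-contained.
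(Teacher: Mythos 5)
Your proposal is exactly the paper's argument: the paper proves Proposition~\ref{theta-local} simply by noting that the proof of Proposition~\ref{theta01} was already carried out at the level of twisted Jacquet modules at a good unramified place (reduction via the local forms of the lemmas of \cite{G-R-S3}, then the Fourier--Jacobi/supercuspidality descent using Lemmas~\ref{theta-generic} and \ref{supercuspidal-lemma}, Proposition~\ref{constant-term-theta}, and \cite{Leslie}, Corollary~6.4), which is precisely what you transcribe. Your closing caution about $F$ playing the role of the good unramified place is consistent with the paper's standing assumptions in that section, so nothing further is needed.
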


We turn to the proof of Theorem~\ref{unramified}. We will follow, in a loose sense, the approaches 
of Bump and the authors \cite{B-F-G2}, Section 6, and Leslie \cite{Leslie}, Section 10, 
in their proofs of unramified correspondences for two other theta maps that use a single theta function as an integral kernel. Below, we refer to these proofs for some details that 
are similar.  

\begin{proof}[Proof of Theorem~\ref{unramified}]  
We shall prove this by induction on $n$.   More precisely, suppose that the 
nonvanishing \eqref{nonvan-hypoth-thm} holds for given $n$, $k$, $\chi$, and $\xi$ with $k=2n$ or $k=2n+1$ and the characters $\chi$, $\xi$
in general position.  Then we show that up to permutation of the indices, 
we have $\chi_1=\xi_1$ or $\chi_1=\xi_1^{-1}$.  
We also show that a similar hypothesis holds true for $(n,k)$ replaced by $(n-1,k-2)$ and $(\chi,\xi)$ 
replaced by $(\chi',\xi')$ where $\chi'=(\chi_2,\dots,\chi_n)$, $\xi'=(\xi_2,\dots,\xi_n)$, that is, we show that
$$\Hom_{SO_{k-2}^{(r)}\times Sp_{2n-2}^{(\kappa r)}}(J_{U_{k-2,r_1,n-1},\psi}(\omega_{\psi}\otimes \Theta_{}^{(r)}),\pi_{SO_{k-2}}^{(r)}(\xi')\otimes \pi_{Sp_{2n-2}}^{(\kappa r)}(\chi'))\neq 0$$ 
Then the result follows by reduction to the case $n=1$, which is straightforward to confirm by the same method used below.

Let $P'_{1,k-2}$ denote the unipotent radical of $SO_k$ with Levi factor $M'_{1,k-2}:=GL_1\times SO_{k-2}$ and unipotent radical $U'_{1,2k-2}$.
Let $P_{1,2n-2}$ denote the unipotent radical of $Sp_{2n}$ with Levi factor $M_{1,2n-2}:=GL_1\times Sp_{2n-2}$ and unipotent radical $U_{1,2n-2}$.
Denote the inverse images of these groups in their respective covering groups by a tilde.
First, by transitivity of induction, we realize  $\pi_{SO_k}^{(r)}(\xi)$ as parabolically induced from $\xi_1\otimes \pi_{SO_k}^{(r)}(\xi')$ with 
$\xi'=(\xi_2,\dots,\xi_n)$. (Note that the inverse images of  $GL_1$ and $SO_{k-2}$ in the local covering group $SO_{k}^{(r)}$ commute 
due to block compatibility, so the tensor product construction is straightforward.)  Similarly,
$\pi_{Sp_{2n}}^{(\kappa r)}(\chi)$ is parabolically induced from the representation $\chi_1\otimes \pi_{Sp_{2n-2}}^{(\kappa r)}(\chi')$
where $\chi'=(\chi_2,\dots,\chi_n)$.  Second, the Jacquet module 
$J_{U'_{1,k-2}}(\pi_{SO_k}^{(r)}(\xi))$ 
is isomorphic to the direct sum of $\xi_i^{\pm1}\otimes \pi_{SO_{k-2}}^{(r)}(\xi'_{i,\pm})$ where $\xi'_{i,+}$ is obtained from $\xi$
by removing $\xi_i$ and $\xi'_{i,-}$ is obtained from $\xi$ by by removing $\xi_i$ and inverting one of the remaining quasicharacters.   (See, for example, Section 5.2 of \cite{B-Z}.)
Note that the action of the Weyl group of $SO_k$ on $\xi$ permutes these factors.
Similarly the Jacquet module 
$J_{U_{1,2n-2}}(\pi_{Sp_{2n}}^{(\kappa r)}(\chi))$ 
is isomorphic to the direct sum of $\chi_j^{\pm1}\otimes \pi_{Sp_{2n-2}}^{(\kappa r)}(\chi'_j)$ where $\chi_j'$ is obtained from $\chi$
by removing $\chi_j$.  
 It follows that the tensor product
$\pi_{Sp_{2n}}^{(\kappa r)}(\chi)\otimes \pi_{SO_k}^{(r)}(\xi)$ is a sum of the induced representations 
$$\Ind^{SO_k^{(r)}\times Sp_{2n}^{(\kappa r)}}_{{\widetilde P}'_{1,k-2}\times \widetilde{P}_{1,2n-2}^{(\kappa r)}}( \xi_i^{\pm1}\otimes \pi_{SO_{k-2}}^{(r)}(\xi_{i,\pm}')\otimes
\chi_j^{\pm1}\otimes \pi_{Sp_{2n-2}}^{(\kappa r)}(\chi_j')\delta_{P'_{1,k-2}}^{1/2}
\delta_{P_{1,2n-2}}^{1/2}).$$
(The weaker statement that this tensor product is glued from these representations (see \cite{B-Z}) would be sufficient below.)
Since the Weyl groups permutes these factors for different indices, we shall focus on the case $\xi_i^{\pm1}=\xi_1$, $\chi_j^{\pm1}=\chi_1$ without loss of generality, and in this
case we write $\xi'_{1,+}=\xi'$, $\chi_1'=\chi'$ (as in the prior paragraph).

Applying Frobenius reciprocity, the nonvanishing hypothesis \eqref{nonvan-hypoth-thm} in Theorem~\ref{unramified} implies that, up to the action of the Weyl groups as just explained,
 there is a nonzero ${{\widetilde M}'_{1,k-2}}\times {\widetilde M}_{1,n-1}$-equivariant map 
$$
J_{U'_{1,k-2}}J_{U_{1,2n-2}}(J_{U_{k,r_1,n},\psi}(\omega_{\psi}\otimes \Theta_{}^{(r)}))\to  \xi_1\otimes \pi_{SO_{k-2}}^{(r)}(\xi')\otimes 
\chi_1\otimes  \pi_{Sp_{2n-2}}^{(\kappa r)}(\chi')\delta_{P'_{1,k-2}}^{1/2}\delta_{P_{1,2n-2}}^{1/2}.
$$
To keep track of the characters that arise, let $A,B$ denote the block matrices
\begin{equation}\label{two-blocks}A=\begin{pmatrix} a&&\\&g'&\\&&a^{-1}\end{pmatrix}\in Sp_{2n}\qquad B=\begin{pmatrix} b&&\\&h'&\\&&b^{-1}\end{pmatrix}\in SO_k,
\end{equation}
with $a,b\in GL_1$, $g'\in Sp_{2n-2}$, $h'\in SO_{k-2}$.
Then $\delta_{P'_{1,k-2}}^{1/2}(B)=|b|^{k/2-1}$ and $\delta_{P_{1,2n-2}}^{1/2}(A)=|a|^{n}$.

Next we apply the argument in \cite{G-R-S4}, Proposition 6.6 (see p.\ 129).  This implies that
 the Hom space of such maps is a quotient of the space
\begin{equation}\label{jacquet-mod-1}
\Hom_{{\widetilde{M}'_{1,k-2}}\times \widetilde{M}_{1,n-1}}(J_{V_{k,r_1,n},\psi}(\omega_\psi\otimes \Theta^{(r)}),
 \xi_1\otimes \pi_{SO_{k-2}}^{(r)}(\xi')\otimes \chi_1\otimes \pi_{Sp_{2n-2}}^{(\kappa r)}(\chi')\delta_{P'_{1,k-2}}^{1/2}\delta_{P_{1,2n-2}}^{1/2}\delta)
 \end{equation}
and hence this space is nonzero. Here $V_{k,r_1,n}\subseteq Sp_{2n+k(r-1)}$ is the upper unipotent subgroup
$V_{k,r_1,n}=\iota_2(U'_{1,k-2},U_{1,2n-2})U_{k,r_1,n}^\flat$ where the group $U_{k,r_1,n}^\flat$
is defined as follows.  Let $\Mat'_{k,2n}$ denote the subgroup
of $\Mat_{k,2n}$ consisting of all matrices $Y=(y_{i,j})$ whose entries $y_{j,1}$, $2\leq j\leq k$ and $y_{k,i}$, $1\leq i\leq 2n$ are each zero. Then
$U_{k,r_1,n}^\flat$ is the subgroup of the group $U_{k,r_1,n}$ consisting of all matrices $u$ whose factorization~\eqref{mat2}  (with $a=k$, $b=r_1$, $c=n$) has the property that $Y$ is
in $\Mat'_{k,2n}$. The character $\psi$  of
$V_{k,r_1,n}$ is the character $\psi_{U_{k,r_1,n}}$ restricted to $U_{k,r_1,n}^\flat$ and then extended trivially to $\iota_2(U'_{1,k-2},U_{1,2n-2})$.
Also, the map $l$ restricted to $V_{k,r_1,n}$ gives a homomorphism from $V_{k,r_1,n}$ (or $U_{k,r_1,n}^\flat$) onto the Heisenberg group $\mathcal{H}_{(n-1)(k-2)}$
and this defines the action of $V_{k,r_1,n}$ on $\omega_\psi$.
The  character $\delta$ factors through the projection from ${\widetilde{M}'_{1,k-2}}\times \widetilde{M}_{1,n-1}$ to ${M}'_{1,k-2}\times {M}_{1,n-1}$,
and evaluated on \eqref{two-blocks} has two factors, a contribution of $|a|^{k/2-1}|b|^n$ due to the action of the Weil representation and a factor from
 the modular function of the quotient $U_{k,r_1,n}/U_{k,r_1,n}^\flat$.  This quotient may be identified 
with the group of all matrices $u_{k,n}'(Y_1,0)$ (see \eqref{mat1}) such that
$Y_1$ is an element in $\Mat_{k,2n}^{00}$ which is the complement  to $\Mat'_{k,2n}$  in $\Mat_{k,2n}$, so this factor is $|a|^{2-k}|b|^{-2n}$.
Combining these, we see that the value of the character $\delta$ on \eqref{two-blocks} above is $|a|^{1-k/2}|b|^{-n}$, so
$\delta_{P'_{1,k-2}}^{1/2}\delta_{P_{1,2n-2}}^{1/2}\delta$ takes the value $|ab^{-1}|^{1-k/2+n}$.
We remark that this step is the local analogue of using the definition of the theta function as a sum and unfolding part of the sum, a process that is 
used in the proof of Theorem~\ref{th2}; see equation \eqref{cusp31} and the discussion following, in the case $\alpha=1$, $\beta=k-2$. 

Next, we make a series of root exchanges. We have already used root exchanges for global integrals. For root exchange in the context of 
representations of local fields, see \cite{G-R-S2}, Section 2.2.   In the local case, they
allow one to replace the Jacquet module in \eqref{jacquet-mod-1} by another isomorphic Jacquet module and
to conclude that the corresponding Hom space is nonzero.  Globally or locally, the root exchanges require an additive character, 
and as in the global case, we use the additive character $\psi$ of $V_{k,r_1,n}$ obtained from $\psi_{U_{k,r_1,n}}$ (see \eqref{character})
and also of the additive character that comes from the Weil representation
$\omega_\psi$.   

The first set of exchanges are the same as the root exchanges used in the proof of Theorem~\ref{th2} above, where we treated the groups $L_{\alpha,\beta}$ introduced
in \eqref{em0}.  We use these same exchanges in the case $\alpha=1$, $\beta=k-2$.  The root exchanges described through \eqref{em3} allow us to conclude that
the space
\begin{equation}\notag 
\Hom_{{\widetilde{M}'_{1,k-2}}\times \widetilde{M}_{1,n-1}}(J_{V"_{k,r_1,n},\psi}(\omega_\psi\otimes \Theta^{(r)}),
 \xi_1\otimes \pi_{SO_{k-2}}^{(r)}(\xi')\otimes \chi_1\otimes \pi_{Sp_{2n-2}}^{(\kappa r)}(\chi')\,\delta_1)
 \end{equation}
 is nonzero.  Here $V''_{k,r_1,n}$ is the unipotent subgroup of $Sp_{2n+k(r-1)}$ generated by the following two types of matrices.  First, 
 the matrices with factorization \eqref{mat2} with
 the $X_i$, $1\leq i\leq r_1-1$, $Y$, and $Z$ of the form
\begin{align*}\notag
&X_i=\begin{pmatrix}*&*&*\\ 0_{(k-2)\times 1}&*&*\\ 0&0_{1\times(k-2)}&*\end{pmatrix}\in \Mat_k,\qquad Y=\begin{pmatrix} *&*&*\\ 0_{(k-2)\times 1}&*&*\\ 0&0_{1\times(2n-2)}&0\end{pmatrix}\in \Mat_{k\times 2n},\\
&Z=\begin{pmatrix} *&*&*\\ 0_{(k-2)\times 1}&*&*\\ 0&0_{1\times(k-2)}&*\end{pmatrix}\in \Mat_k^0,
\end{align*} 
where the stars indicate arbitrary elements of $F$. Second, 
the block-diagonal matrices in the Levi of $P_{k,r_1,n}$ whose entries in the first $r_1$ diagonal $k\times k$ blocks are of the form \eqref{em0} with $\alpha=1$, $\beta=k-2$.
These root exchanges also change the character $\delta_{P'_{1,k-2}}^{1/2}\delta_{P_{1,2n-2}}^{1/2}\delta$ above to $\delta_1$ whose value on \eqref{two-blocks} is 
$\delta_{P'_{1,k-2}}^{1/2}\delta_{P_{1,2n-2}}^{1/2}\delta |b|^{-k-(r_1-1)(2k-2)}=|ab^{-1}|^{1-k/2+n}|b|^{-k-(r_1-1)(2k-2)}$.

Next we conjugate by the Weyl group element $w\in Sp_{2n+k(r-1)}$ which is the shortest Weyl group element that conjugates the matrix
\begin{equation}\label{first-matrix}\diag(B,\dots,B,A,B^*,\dots,B^*),
\end{equation}
to the matrix
\begin{equation}\label{conj-matrix-jacmod}
\diag(b,\dots,b,a,h',\dots,h',g',(h')^*,\dots.(h')^*,a^{-1},b^{-1},\dots,b^{-1}).
\end{equation}
In \eqref{first-matrix} above, $A$ and $B$ are as given in \eqref{two-blocks}, the notation $B^*$ is defined in Section~\ref{basic}, and each of $B,B^*$ appears $r_1$ times;
in \eqref{conj-matrix-jacmod}, each of $b,b^{-1}$ is repeated $r-1$ times and each of $h',(h')^*$ is repeated $r_1$ times.
After doing so, we conclude that the Hom space
\begin{equation}\notag 
\Hom_{\widetilde{M}}(J_{V',\psi'}(\omega_\psi\otimes \Theta^{(r)}),
 \xi_1\otimes \pi_{SO_{k-2}}^{(r)}(\xi')\otimes \chi_1\otimes \pi_{Sp_{2n-2}}^{(\kappa r)}(\chi')\,\delta_1)
\end{equation}
is nonzero, where $\widetilde{M}$ is isomorphic to ${\widetilde{M}'_{1,k-2}}\times \widetilde{M}_{1,n-1}$ and $\iota_2(\widetilde{M'})$ projects to the group of matrices of the 
form \eqref{conj-matrix-jacmod}. 
The group $V'=wV''_{k,r_1,n}w^{-1}$ is closely related to the unipotent group that appeared in the integration in \eqref{cusp33} above. More precisely, an element in $V'$ may be 
written as a product of the form
\begin{equation}\label{first}
\begin{pmatrix} I_r&C&D\\ &I&C^*\\ &&I_r\end{pmatrix} \begin{pmatrix} v&&\\
&u&\\ &&v^*\end{pmatrix} \begin{pmatrix} I_r&&\\ R&I& \\ S&R^*&I_r\end{pmatrix}
\end{equation}
where $I=I_{2n+k(r-1)-2r}$, $u\in U_{k-2,r_1,n-1}$ and $v\in V_r^0$, which is defined as follows. Let $V_r$ be the maximal upper unipotent subgroup of $GL_r$. Then $V_r^0$ is the subgroup of $V_r$ 
consisting of all matrices $(v_{i,j})$ such that $v_{r-1,r}=0$. The matrices $C$, $D$, $R$ and $S$ are defined similarly to the corresponding matrices $C$, $D$, $A$ and $B$ 
in integral  \eqref{cusp33}.  With these notations the character $\psi'$ is described as follows. First
$$\psi'(\text{diag}(v,u,v^*))=\psi_r^0(v)\psi_{U_{k-2,r_1,n-1}}(u)$$
where $\psi_r^0$ is the Whittaker character of $V_r$ restricted to $V_r^0$. Then $\psi'$ is extended trivially to the rest of $V'$.

In view of this, the space we are studying is analogous to the integral \eqref{cusp33}, with the unipotent integration in this local context being the process of taking
the twisted Jacquet module, except that in that integral one is integrating
$g$ over (a cover of) $Sp_{2n}$ while here we have the Hom space with respect to the parabolic subgroup $\widetilde{M}_{1,n-1}$ which projects to 
Levi factor $GL_1\times Sp_{2n-2}$.  Note that in the proof of Theorem~\ref{th2} we also conjugated by a Weyl group element, but not quite the same one we are using now,
due to the difference between $Sp_{2n}$ and $GL_1\times Sp_{2n-2}$.
We next use many of the same steps in the proof of Theorem~\ref{th2} with minor modifications.

First, we carry out a series of root exchanges that
are the same as the ones given following \eqref{block6} with $\alpha=1$, $\beta=k-2$, but with an adjustment on the size of the blocks, replacing $r-1$ by $r$.
(For example, $I_{\alpha(r-1)}$
in \eqref{cusp33} is now replaced by $I_r$, as in \eqref{first}.)   These are performed row by row. 
Specifically, we start with the second row of the matrix $C$ as given in \eqref{first}. Choosing the third column in the matrix $R$ 
in \eqref{first}, we perform root exchange; this is the same as the first in the sequence of root exchanges that was carried out following 
\eqref{block6}, and contributes the Jacobian $|b|^k$. We continue to make root exchanges,  again following the procedure
described following \eqref{block6}, until we have completed the first $r-2$ rows.
In the local case, these root exchanges imply the nonvanishing of
the Hom space 
\begin{equation}\label{hom-space2}
\Hom_{\widetilde{M}}(J_{V_1',\psi_1'}(\omega_\psi\otimes \Theta^{(r)}),
 \xi_1\otimes \pi_{SO_{k-2}}^{(r)}(\xi')\otimes \chi_1\otimes \pi_{Sp_{2n-2}}^{(\kappa r)}(\chi')\,\delta_2).
\end{equation}
Here $V_1'$ is the subgroup of $Sp_{2n+k(r-1)}$ of matrices which have a factorization \eqref{first} that satisfies the following conditions:
for $C$: $C_{i,j}=0$ if $i=r-1$, $1\leq j\leq 2n+(r-1)(k-2)-k$ or $i=r$, $1\leq j\leq r_1(k-2)$; for $D$, $D_{r-1,1}=D_{r-1,2}=D_{r,2}=0$;
for $R$, $R_{i,j}=0$ if $1\leq j\leq r-1$ or if $j=r$, $i> (r_1-1)(k-2)$; and $S=0$.
Also, $\psi_1'$ is the character of $V_1'$ that is $\psi'$ extended trivially to $V_1'$.  (Note that studying this nonvanishing is the local analogue of studying the 
possible nonvanishing of \eqref{cusp34}
with $\alpha=1$ and $r-1$ replaced by $r$, and this same group
and character, up to this modification, appear there.)  The character $\delta_2$ in \eqref{hom-space2} is computed as follows.
In carrying out these root exchanges, 
the character $\delta_1$, evaluated on \eqref{two-blocks}, 
is multiplied by a power of $|b|$ at each step of the root exchange process.
For $1\leq j\leq r_1$, to fill in the $j$-th row by root exchange the adjustment on the character is $|b|^{k(j-1)}$, 
and for row $r_1+j$, $1\leq j\leq r_1-1$, the adjustment is $|b|^{r_1k+2n-2+(j-1)(k-4)}$.  The character $\delta_2$ in \eqref{hom-space2} is obtained by
combining these adjustments with the prior character $\delta_1$.  (We will compute the final character later in the proof.)

We next treat the $(r-1)$-st row. To do so, we perform a local version of Fourier expansion. 
To describe the procedure, let $L$ denote a unipotent abelian subgroup of $Sp_{2n+k(r-1)}$ such that $LV_1'$ is also 
a unipotent group, and such that $\psi_1'$ is a well-defined character of $LV_1'$ when extended trivially from $V_1'$ to $LV_1'$. 
Also suppose that the group $M$ normalizes this unipotent group. Then it 
follows from the nonvanishing of the Hom space \eqref{hom-space2} that at least
one of the spaces
\begin{equation}\label{617}
\Hom_{\widetilde{M}_{\psi_L}}( J_{L,\psi_L}(J_{V_1',\psi_1'}(\omega_\psi\otimes 
\Theta^{(r)})),\xi_1\otimes \pi_{SO_{k-2}}^{(r)}(\xi')\otimes \chi_1\otimes \pi_{Sp_{2n-2}}^{(\kappa r)}(\chi')\,\delta_2).
\end{equation}
is non-zero, as $\psi_L$ runs over a set of representatives of the characters of the group $L$ under the action of the group $M$.
Here $\widetilde{M}_{\psi_L}$ is the stabilizer of $\psi_L$ in $\widetilde{M}$.
Indeed, this follows from the gluing described in 
Bernstein and Zelevinsky \cite{B-Z}, Theorem 5.2, applied as in the work of Bump and the authors \cite{B-F-G2}, pp.\ 392-3.
As explained there, one obtains an exact sequence from the Geometrical Lemma of \cite{B-Z}, p.\ 448, and then applies a Jacquet functor which preserves the sequence (as it is an exact functor).
One term involves a Jacquet functor applied to a compactly-induced representation, and to study the Hom space from this piece, one applies Mackey
theory as extended in \cite{B-Z}. 

We begin to make this expansion with the root supported on one-parameter subgroup  $E_{r-1,2n+k(r-1)-r+2}$ (defined before Proposition~\ref{theta04}). 
In fact, this is the same procedure used to analyze \eqref{cusp34} and described
in detail following \eqref{cusp34}. The Jacquet module with the character (the `non-constant term' in the
global computation) factors through the twisted Jacquet module corresponding to the
orbit $((r+1)1^{2n+k(r-1)-r+1})$ for $\Theta^{(r)}$.  However,  this vanishes by the smallness of the representation $\Theta^{(r)}$,
Proposition~\ref{theta-local}.  
Let $V_2'=E_{r-1,2n+k(r-1)-r+2}V_1'$, and let $\psi_2'$ denote the character of $V_2'$ which is the trivial extension of $\psi_1'$. Then we deduce that the space \eqref{hom-space2}
is isomorphic to 
\begin{equation}\label{hom2}
\Hom_{\widetilde{M}} (J_{V_2',\psi_2'}(\omega_\psi\otimes 
\Theta^{(r)}),\xi_1\otimes \pi_{SO_{k-2}}^{(r)}(\xi')\otimes \chi_1\otimes \pi_{Sp_{2n-2}}^{(\kappa r)}(\chi')\,\delta_2),
\end{equation}
hence nonzero.

We move to the next root, with root subgroup $E'_{r-1,2n+k(r-1)-r+1}$.  
For convenience denote this group $L$, and let 
$$x(t)=I_{2n+k(r-1)}+te'_{r-1,2n+k(r-1)-r+1}\in L$$
($e'_{i,j}$ is defined before Proposition~\ref{theta04}).
The group $M$ consisting of all matrices of the form \eqref{conj-matrix-jacmod} acts on $L$ by conjugation with two orbits; indeed  if $m$ is given by \eqref{conj-matrix-jacmod}, then
$mx(t)m^{-1}=x(abt)$. Let $M_0\subset M$ denote the stabilizer of this action; this is the  subgroup consisting of all matrices  \eqref{conj-matrix-jacmod} with $b=a^{-1}$. 
Let $\psi_L$ denote a nontrivial character of $L$. Then, since \eqref{hom2} is not zero, we deduce that at least one of the two spaces 
\begin{equation}\label{hom3}
\Hom_{\widetilde{M}_0}( J_{L,\psi_L}(J_{V_2',\psi_2'}(\omega_\psi\otimes 
\Theta^{(r)})),\xi_1\otimes \pi_{SO_{k-2}}^{(r)}(\xi')\otimes \chi_1\otimes \pi_{Sp_{2n-2}}^{(\kappa r)}(\chi')\,\delta_2)
\end{equation}
and
\begin{equation}\label{hom4}
\Hom_{\widetilde{M}}( J_{V_3',\psi_3'}(\omega_\psi\otimes 
\Theta^{(r)}),\xi_1\otimes \pi_{SO_{k-2}}^{(r)}(\xi')\otimes \chi_1\otimes \pi_{Sp_{2n-2}}^{(\kappa r)}(\chi')\,\delta_2)
\end{equation}
is also not zero.  Here $V_3'=LV_2'$ and the character $\psi_3'$ is obtained by extending $\psi_2'$ trivially to $V_3'$.

We start with the case that the space \eqref{hom3} is not zero. (We will deal with the space \eqref{hom4} afterwards.)
There the character $\psi_L$, which appears through the twisting, allows us to do root exchanges (similar to the
the treatment of \eqref{cusp34} above), in order 
to fill in all the remaining positive root subgroups $E'_{r-1,j}$ in row $r-1$, as follows.  First, we use $\psi_L$ to move roots into row $r-1$ 
from positive roots $E'_{r,j}$ with $r+r_1(k-2)<j\leq 2n+k(r-1)-r$ and $E_{r,2n+k(r-1)-r+1}$ that appear in $V_2'$
 in row $r$. We then use root exchange to exchange the $(r_1-1)(k-2)$ negative roots $E_{j,r}'$, $r<j\leq r+(r_1-1)(k-2)$
of $V_2'$ (these appear in $R$ and $R^*$ of \eqref{first}) into row $r-1$.  
Note that this root exchange is possible because there is an additive character appearing from the Weil representation $\omega_\psi$.
Doing these exchanges multiplies the character $\delta_2$ by a factor of $|b|^{r_1k+2n-2+(r_1-1)(k-4)}|a|^{(r_1-1)(k-2)}$.  
Thus we obtain the nonvanishing of the Hom space
$$
\Hom_{\widetilde{M}_0}( J_{L',\psi_{L'}}(J_{V_2'',\psi_2'}(\omega_\psi\otimes 
\Theta^{(r)})),\xi_1\otimes \pi_{SO_{k-2}}^{(r)}(\xi')\otimes \chi_1\otimes \pi_{Sp_{2n-2}}^{(\kappa r)}(\chi')\,\delta_3).
$$
Here $L'$ is the (abelian) group generated by $L$ and all the positive root subgroups of the form $E_{r-1,j}'$, $r\leq j\leq 2n+k(r-1)-r$ (note that this group includes $E_{r-1,2n+k(r-1)-r+2}$),
$\psi_{L'}$ is $\psi_L$ extended trivially from $L$ to $L'$, $V_2''$ is the subgroup of $V_2'$ consisting of upper unipotent matrices with 0 in position $(r,j)$, $r<j\leq 2n+k(r-1)-r+1$
and $\delta_3=\delta_2 |b|^{r_1k+2n-2+(r_1-1)(k-4)}|a|^{(r_1-1)(k-2)}.$

At this point we conjugate by the shortest Weyl group element, call it $w'$, that interchanges $a$ and $a^{-1}$ in \eqref{conj-matrix-jacmod}.
We then fill in the remaining positive roots in row $r$. To do so we repeat the same process as above. We start with the group
$E_{r,2n+k(r-1)-r+1}$.  Expanding, the twisted Jacquet module vanishes by the smallness of $\Theta^{(r)}$.  Then we consider the group
$L''/E_{r,2n+k(r-1)-r+1}$ where $L''$ is the group generated by $E_{r,2n+k(r-1)-r+1}$ and the $E'_{r,j}$ with $r+1\leq j\leq 2n+k(r-1)-r$.
The Hom spaces with nontrivial characters at these roots each are zero, since 
nonvanishing for a nontrivial character in row $r$ would imply that the representation $\Theta^{(r)}$ supports a functional for a unipotent orbit
that has at least $r+1$ in its partition, and this would contradict Proposition~\ref{theta-local}.

Putting all this together, the nonvanishing of the Hom space \eqref{hom3} implies that
$$\Hom_{\widetilde{M}_1}(J_{U_{k-2,r_1,n-1},\psi}(\omega_{\psi}\otimes J_{U_{GL_r},\psi_{Wh}}(J_{U_{r,1,n+kr_1-r}}(\Theta^{(r)}))),
\xi_1\otimes\pi_{SO_{k-2}}^{(r)}(\xi')\otimes \chi_1^{-1}\otimes\pi_{Sp_{2n-2}}^{(\kappa r)}(\chi')\,\delta_3)$$
is nonzero.  Here $\widetilde{M}_1=w'\widetilde{M}_0(w')^{-1}$. The projection of $\widetilde{M}_1$  to $Sp_{2n+k(r-1)}$ is
\begin{equation}\notag
\diag(a^{-1},\dots,a^{-1},h',\dots,h',g',(h')^*,\dots.(h')^*,a,\dots,a)
\end{equation}
where $a^{-1}$ and $a$ are each repeated $r$ times and $h'$, $(h')^*$ are each repeated $r_1$ times.
The Jacquet functor applied to $\Theta^{(r)}:=\Theta^{(r)}_{2n+k(r-1)}$ is the untwisted Jacquet functor with respect to $U_{r,1,n+kr_1-r}$, 
the unipotent radical of the parabolic with Levi factor $GL_r\times Sp_{2n+k(r-1)-2r}$.  By Proposition~\ref{constant-term-theta}, this functor gives the 
module $\Theta_{GL_r}^{(r)}\otimes \Theta_{{2n+2k(r-1)-2r}}^{(r)}$.  Here $\Theta_{GL_r}^{(r)}$ denotes the local theta representation for $GL_r^{(r)}(F)$.
The Jacquet functor $J_{U_{GL_r},\psi_{Wh}}$ is the twisted Jacquet functor with respect to the
Whittaker character of $GL_r$, and is applied to the first component of this tensor product. 

However,
the theta representation on the $r$-fold cover of $GL_r$ is generic by \cite{K-P}, and its central character is computed there. Thus the action of $GL_1$ (more precisely, 
the subgroup of central elements in the 
metaplectic group) is as follows.
 If $B$ is the Borel of $Sp_{2n+k(r-1)}$ then we obtain a total 
contribution from this step of
$$\delta_B^{\frac{r-1}{2r}}\left(\begin{pmatrix}a^{-1}I_r&&\\&I_{2n+k(r-1)-2r}&\\&&aI_r\end{pmatrix}\right)=|a|^{-r_1(2n+(k-1)(r-1))}.$$
The character $\delta_3$ is given on \eqref{two-blocks} with $b=a^{-1}$ by $|a|^{-1}$ raised to the power obtained by collecting all the terms arising from
the unfolding and the root exchanges, each of which is noted above.
This power is
\begin{align*}k+(r_1-1)(2k-2)+\sum_{j=1}^{r_1-1} jk+r_1(r_1k+2n-2)+\sum_{j=1}^{r_1-1} j (k-4) - (r_1-1)(k-2)
\\=r_1(2n+(k-1)(r-1)).
\end{align*}
This power of $|a|^{-1}$ thus exactly matches the corresponding contribution from the Whittaker coefficient of $\Theta_{GL_r}^{(r)}$.

We conclude that if the Hom space \eqref{hom3}
 is nonzero, then  $\chi_1=\xi_1$ and moreover the space
$$\Hom_{SO_{k-2}^{(r)}\times Sp_{2n-2}^{(\kappa r)}}
(J_{U_{k-2,r_1,n-1},\psi}(\omega_{\psi}\otimes \Theta_{2n-2+(k-2)(r-1)}^{(r)})),  \pi_{SO_{k-2}}^{(r)}(\xi')\otimes \pi_{Sp_{2n-2}}^{(\kappa r)}(\chi'))$$
is nonzero. This is the same nonvanishing as in \eqref{nonvan-hypoth-thm} but with $(n,k)$ replaced by $(n-1,k-2)$ and $(\chi,\xi)$
replaced by $(\chi',\xi')$. Thus in this case we are done by induction.   

We must also analyze the situation that it is the Hom space \eqref{hom4} which is non-zero. 
In this case, we continue to expand along the roots of row $r-1$ using the gluing or local Fourier expansion procedure.  
We already have the roots in this row that sit over the last $h'$ and the final $r$ entries of the matrix \eqref{conj-matrix-jacmod}.
We first expand along the root spaces $E'_{r-1,j}$ with $r<j\leq 2n+k(r-1)-r-(k-2)$, taking into account the groups $SO_{k-2}$ and $Sp_{k-2}$ which
act on the characters in the Fourier expansion by conjugation (see following \eqref{617}). Thus we expand using the abelian groups consisting of
$k-2$ root spaces $E'_{r,j}$ at a time for the roots whose root spaces include an entry in the $r$-th row above each $h'$ and $h$ and
$2n-2$ at a time for the roots above the middle $g'$.
It follows by using the smallness of $\Theta^{(r)}$ or general position (when a copy of $GL_1$, restricted to $r$-th powers, acts on the module under consideration by a fixed character)
that the contributions coming from each nontrivial character in this expansion vanish.
We conclude that the space
$$
\Hom_{\widetilde{M}}( J_{V_4',\psi_4'}(\omega_\psi\otimes 
\Theta^{(r)}),\xi_1\otimes \pi_{SO_{k-2}}^{(r)}(\xi')\otimes \chi_1\otimes \pi_{Sp_{2n-2}}^{(\kappa r)}(\chi')\,\delta_2)
$$
is non-zero, where $V_4'$ is the group generated by $V_3'$ and the root subgroups $E'_{r-1,j}$ with $r<j\leq 2n+k(r-1)-r+1$, and the
character $\psi_4'$ is $\psi_3'$ extended trivially
to $V_4'$.

 We now do the expansion along the root subgroup $E'_{r-1,r}$.
 This is again glued from two terms, but the situation here is different.  
 First, there is a constant term at this root.  In this case, the Jacquet module arising from the expansion factors through $J_{U_{r-1,1,2n+(k-2)(r-1)}}(\Theta^{(r)})$.
By Proposition~\ref{constant-term-theta}, this gives
the representation $\Theta_{GL_{r-1}}^{(r)}\otimes \Theta^{(r)}_{2n+(k-2)(r-1)}$.  However, here the $GL_1$ in position $b$ in \eqref{conj-matrix-jacmod}
(restricted to $r$-th powers) acts by a fixed character.  Since we are assuming that $\chi$ and $\xi$ are in general position, the Hom space from this term is zero.
We conclude that it is the non-constant piece that must be non-zero.
 
To analyze the non-constant piece, i.e.\ the contribution
 from a nontrivial character $\psi_{r-1,r}$ of the group $E'_{r-1,r}$,
we must restrict to the stabilizer in $\widetilde{M}$ of this character.  If $x_{r-1,r}(t)=I_{2n+k(r-1)}+te'_{r-1,r}$
and $m$ is given by \eqref{conj-matrix-jacmod} then
$mx_{r-1,r}(t)m^{-1}=x_{r-1,r}(ba^{-1}t)$. Thus this character is stabilized by all matrices  \eqref{conj-matrix-jacmod} with $a=b$. 
One argues similarly to the above to see that this Hom space is nonzero only if $\chi_1=\xi_1^{-1}$
 and if
 $$\Hom_{SO_{k-2}^{(r)}\times Sp_{2n-2}^{(\kappa r)}}
(J_{U_{k-2,r_1,n-1},\psi}(\omega_{\psi}\otimes \Theta_{2n-2+(k-2)(r-1)}^{(r)})),  \pi_{SO_{k-2}}^{(r)}(\xi')\otimes \pi_{Sp_{2n-2}}^{(\kappa r)}(\chi'))\neq(0).$$
Then we are done by induction. 

This concludes the proof of Theorem~\ref{unramified}.
\end{proof}

\end{document}